 \newtheorem{thm}{Theorem}[section]
 \newtheorem{cor}[thm]{Corollary}
 \newtheorem{lem}[thm]{Lemma}
  \newtheorem{lemdef}[thm]{Lemma and Definition}
 \newtheorem{prop}[thm]{Proposition}
 \theoremstyle{definition}
 \theoremstyle{remark}
 \newtheorem{rem}[thm]{Remark}
 \newtheorem{rems}[thm]{Remarks}
 \newtheorem*{ex}{Example}
 \numberwithin{equation}{section}
\newcommand{\Sym}{\mathrm{Sym}}
\def\beq{\begin{equation}}
\def\eeq{\end{equation}}
\newcommand{\hs}{\hskip-0.1em}
\def\prfnoi{\smallskip\noindent}
\newcommand{\emdf}{\bf}               % Definitionen im Text
\def\qedsymbol{\hbox to 1ex{\llap{\rule{0.25pt}{1ex}}\rlap{\rule{1ex}{0.25pt}}\lower0.25pt\rlap{\raise1ex\rlap{\rule{1ex}{0.25pt}}}\hskip1ex\llap{\rule{0.25pt}{1ex}}}}
\def\rlqed{\rlap{\rule{\hsize}{0pt}\kern-1ex\kern-1em\qed}} %% \qed rechts
\newcommand{\beproof}[1]{%
\renewcommand{\proofname}{#1}\begin{proof}}
\newcommand{\enproof}{%
\end{proof}\renewcommand{\proofname}{Proof}}
\newcommand{\wot}{\mathrm{w}} %% weak op top
\newcommand{\sot}{\mathrm{s}} %% strong op top
\newcommand{\dann}{\,\Rightarrow\,}% \rightarrow
\newcommand{\gdw}{\,\Leftrightarrow\,}
\newcommand{\Iff}{\Longleftrightarrow}
\newcommand{\defiff}{\stackrel{\text{\upshape\tiny def.}}{\Iff}}
\newcommand{\ohne}{\setminus}
\newcommand{\leer}{\emptyset}
\newcommand{\set}[1]{\hs\left[\,#1\,\right]}
\newcommand{\suchthat}{\,\,|\,\,}     % Durch Aussonderung definierte Mengen
\DeclareMathOperator{\dom}{dom}
\DeclareMathOperator{\ran}{ran}
\DeclareMathOperator{\fix}{fix}
\newcommand{\res}[1]{|_{#1}}
\newcommand{\car}{\mathbf{1}}           % Konstante Funktion 1
\def\id{\mathop{\mathrm{id}}\nolimits}
\newcommand{\R}{\mathbb{R}}
\newcommand{\N}{\mathbb{N}}
\newcommand{\Z}{\mathbb{Z}}
\newcommand{\C}{\mathbb{C}}
\newcommand{\K}{\mathbb{K}}
\newcommand{\abs}[1]{\left\vert#1\right\vert}   % Absolutbetrag math. Modus
\newcommand{\cls}[1]{\overline{#1}}     % Abschluss
\newcommand{\Ball}{\mathrm{B}}          % Ball(x,r)
\newcommand{\cl}{\mathrm{cl}}
\DeclareMathOperator{\dist}{dist}       % Abstand
\DeclareMathOperator{\spann}{span}  
\newcommand{\tensor}{\otimes}
\newcommand{\norm}[1]{\| #1 \| }
\newcommand{\tdprod}[2]{\ensuremath{%
  \setbox0=\hbox{\ensuremath{\langle#1,#2\rangle}}
  \dimen@\ht0
  \advance\dimen@ by \dp0 \langle #1\rule[-\dp0]{0pt}{\dimen@}\,, #2\hspace{1pt}\rangle}}
\newcommand{\bdprod}[2]{\ensuremath{%
  \setbox0=\hbox{\ensuremath{\bigl\langle #1,#2 \bigr\rangle)}}
  \dimen@\ht0
  \advance\dimen@ by \dp0 \bigl\langle#1\bigl.,\rule[-\dp0]{0pt}{\dimen@}\bigr.#2\hspace{1pt}\bigr\rangle}}
\newcommand{\Bdprod}[2]{\ensuremath{%
  \setbox0=\hbox{\ensuremath{\Bigl(#1,#2\Bigr)}}
  \dimen@\ht0
  \advance\dimen@ by \dp0 \Bigl(#1\Bigl|\rule[-\dp0]{0pt}{\dimen@}\Bigr.#2\hspace{1pt}\Bigr)}}
\DeclareMathOperator{\BL}{\calL}
\def\Id{{\mathop{\mathrm{I\mathstrut}}}}
\newcommand{\spec}{\usigma}
\newcommand{\Pspec}{\spec_{\mathrm{p}}}
\newcommand{\CO}{\mathcal{K}}    % compact operators
\newcommand{\tsprod}[2]{\ensuremath{%
  \setbox0=\hbox{\ensuremath{(#1,#2)}}
  \dimen@\ht0
  \advance\dimen@ by \dp0 (#1\rule[-\dp0]{0pt}{\dimen@}\,|#2\hspace{1pt})}}
\newcommand{\sprod}[2]{\ensuremath{%
  \setbox0=\hbox{\ensuremath{\left(#1,#2\right)}}
  \dimen@\ht0
  \advance\dimen@ by \dp0 \left(\left.#1\rule[-\dp0]{0pt}{\dimen@}\,\right|#2\hspace{1pt}\right)}}
\newcommand{\bsprod}[2]{\ensuremath{%
  \setbox0=\hbox{\ensuremath{\bigl(#1,#2\bigr)}}
  \dimen@\ht0
  \advance\dimen@ by \dp0 \bigl(#1\bigl|\rule[-\dp0]{0pt}{\dimen@}\bigr.#2\hspace{1pt}\bigr)}}
\newcommand{\Bsprod}[2]{\ensuremath{%
  \setbox0=\hbox{\ensuremath{\Bigl(#1,#2\Bigr)}}
  \dimen@\ht0
  \advance\dimen@ by \dp0 \Bigl(#1\Bigl|\rule[-\dp0]{0pt}{\dimen@}\Bigr.#2\hspace{1pt}\Bigr)}}
\newcommand{\tsdprod}[2]{\ensuremath{%
  \setbox0=\hbox{\ensuremath{(#1,#2)}}
  \dimen@\ht0
  \advance\dimen@ by \dp0 \langle#1\rule[-\dp0]{0pt}{\dimen@}\,|#2\hspace{1pt}\rangle}}
\newcommand{\sdprod}[2]{\ensuremath{%
  \setbox0=\hbox{\ensuremath{\left(#1,#2\right)}}
  \dimen@\ht0
  \advance\dimen@ by \dp0 \left\langle\left.#1\rule[-\dp0]{0pt}{\dimen@}\,\right|#2\hspace{1pt}\right\rangle}}
\newcommand{\bsdprod}[2]{\ensuremath{%
  \setbox0=\hbox{\ensuremath{\bigl(#1,#2\bigr)}}
  \dimen@\ht0
  \advance\dimen@ by \dp0 \bigl\langle#1\bigl|\rule[-\dp0]{0pt}{\dimen@}\bigr.#2\hspace{1pt}\bigr\rangle}}
\newcommand{\Bsdprod}[2]{\ensuremath{%
  \setbox0=\hbox{\ensuremath{\Bigl(#1,#2\Bigr)}}
  \dimen@\ht0
  \advance\dimen@ by \dp0 \Bigl\langle#1\Bigl|\rule[-\dp0]{0pt}{\dimen@}\Bigr.#2\hspace{1pt}\Bigr\rangle}}
\newcommand{\Ce}{\mathrm{C}}
\newcommand{\co}{\mathrm{c}_{\mathrm{0}}}
\newcommand{\Ell}[2][\relax]{%     % Ell-p spaces
   \ifx#1\relax \mathrm{L}^{\mathrm{#2}}
   \else \mathrm{L}^{\mathrm{#2}}_{\mathrm{#1}}
   \fi}
\renewcommand{\Ell}[2][\relax]{%    ??????????????????????????????????????????
   \ifx#1\relax \mathrm{L}^{#2}
   \else \mathrm{L}^{#2}_{\mathrm{#1}}
   \fi}
\newcommand{\Wee}[2][\relax]{%    %   Sobolev spaces
   \ifx#1\relax \mathrm{W}^{\mathrm{#2}}
   \else \mathrm{W}^{\mathrm{#2}}_{\mathrm{#1}}
   \fi}
\newcommand{\Har}[2][\relax]{%       %Hardy spaces
   \ifx#1\relax \mathsf{H}^{\mathsf{#2}}
   \else   \mathsf{H}^{\mathsf{#2}}_{\mathrm{#1}}
   \fi}
\newcommand{\prX}{\mathrm X}     % probability space
\newcommand{\prY}{\mathrm Y}
\newcommand{\SigmaX}{\Sigma_\prX}   %the corresponding sigma-algebra
\newcommand{\SigmaY}{\Sigma_\prY}
\newcommand{\muX}{\mu_\prX}     % the P-measure of the probability space \prX
\newcommand{\muY}{\mu_\prY}
\def\maketag@@@#1{\m@th\normalfont#1%
\gdef\tagform@##1{\maketag@@@{(\ignorespaces##1\unskip\@@italiccorr)}}}
\def\eqtext#1{\global\eqtext@true\gdef\tagform@##1{\maketag@@@{\ignorespaces##1\unskip\@@italiccorr}}\tag{#1}}%
\def\reqtext#1{\gdef\tagform@##1{\maketag@@@{\ignorespaces##1\unskip\@@italiccorr}}\tag{#1}}%
\def\@@@lefttag{
   \kern-\tagshift@
        \if1\shift@tag\row@\relax
            \rlap{\vbox{%
                \normalbaselines
                \boxz@
                \vbox to\lineht@{}%
                \raise@tag
            }}%
        \else
%            \hskip-\displaywidth
\hskip-0.30em%
\rlap{\boxz@}%
        \fi
        \kern\displaywidth@
    }
\newif\ifeqtext@
\def\place@tag{%
\ifeqtext@
\@@@lefttag
\else
   \iftagsleft@
     \@@@lefttag
     \else
        \kern-\tagshift@
        \if1\shift@tag\row@\relax
            \llap{\vtop{%
                \raise@tag
                \normalbaselines
                \setbox\@ne\null
                \dp\@ne\lineht@
                \box\@ne
                \boxz@
            }}%
        \else
            \llap{\boxz@}%
        \fi
    \fi
   \fi
    \global\eqtext@false
}
\def\romref#1{(\@roman{\ref{#1}})} %% roman nummer als referenz zu "item"
\begin{document}
%-------------------------------------------------------------------------
% editorial commands: to be inserted by the editorial office
%
%\firstpage{1}
%\volume{228}
%\Copyrightyear{2004}
%\DOI{003-0001}
%
%
%\seriesextra{Just an add-on}
%\seriesextraline{This is the Concrete Title of this Book\br H.E. R and S.T.C. W, Eds.}
%
% for journals:
%
%\firstpage{1}
%\issuenumber{1}
%\Volumeandyear{1 (2004)}
%\Copyrightyear{2004}
%\DOI{003-xxxx-y}
%\Signet
%\commby{inhouse}
%\submitted{March 14, 2003}
%\received{March 16, 2000}
%\revised{June 1, 2000}
%\accepted{July 22, 2000}
%
%
%
%---------------------------------------------------------------------------
%Insert here the title, affiliations and abstract:
%
\title{Asymptotics of Operator Semigroups via the Semigroup at Infinity}
% via the Jacobs--de Leeuw--Glicksberg Splitting Theorem}

%----------Author 1
\author[Jochen Gl\"uck]{Jochen Gl\"uck}
\address{%
Ulm University\\ 
Institute of Applied Analysis\\
Helmholtzstr. 18\\
89081 Ulm, Germany}

\email{jochen.glueck@alumni.uni-ulm.de}

%----------Author 2
\author[Markus Haase]{Markus Haase}

\address{%
Kiel University\\
Mathematisches Seminar\\
Ludewig-Meyn-Str.4\\
42118 Kiel, Germany}

\email{haase@math.uni-kiel.de}

%\thanks{This work was completed with the support of our
%\TeX-pert.}

%----------classification, keywords, date
\subjclass{Primary 47D06; Secondary 20M30, 46B42, 47B34, 47B65, 47D03}

\keywords{Convergence of operator semigroups; Jacobs--de
  Leeuw--Glicksberg theory; positive semigroup representations;
  positive group representations; AM-compact operators; kernel
  operators; triviality of the peripheral point spectrum}

\date{\today}
%----------additions
\dedicatory{Dedicated to Ben de Pagter on the occasion of his 65th birthday.}
%%% ----------------------------------------------------------------------

\begin{abstract}
We systematize and generalize recent results of Gerlach and Glück
on the strong convergence and spectral theory of bounded (positive) operator
semigroups $(T_s)_{s\in S}$ on Banach spaces (lattices).
(Here, $S$ can be an arbitrary commutative semigroup, and no
topological assumptions neither on $S$ nor on its representation 
are required.)
To this aim,  we introduce the ``semigroup at infinity'' and give
useful criteria ensuring that the well-known Jacobs--de
Leeuw--Glicksberg splitting theory can  be applied to it. 

Next, we confine these abstract results to
positive semigroups on Banach lattices with a quasi-interior
point. In that situation, 
the said criteria are intimately linked to so-called
AM-compact operators (which entail kernel operators and compact operators);
and they imply that 
the original semigroup asymptotically embeds into a compact group of
positive invertible operators on an atomic Banach lattice. By means
of  a structure theorem for such group representations (reminiscent
of the Peter--Weyl theorem and its consequences for Banach space
representations of compact groups) we are able to establish quite general
conditions implying the strong convergence of the original semigroup.

Finally, we show how some classical results of Greiner (1982), Davies
(2005), Keicher (2006) and Arendt (2008) and more recent ones
by Gerlach and Glück (2017) are covered and extended through our approach.
\end{abstract}

%%% ----------------------------------------------------------------------
\maketitle
%%% ----------------------------------------------------------------------
%\tableofcontents

\section{Introduction}\label{s.intro}

In this paper we deal with the problem of finding 
useful criteria for the strong convergence of a bounded operator semigroup $T$
on a Banach space $E$, with a special view on the asymptotics of
certain semigroups of positive operators on Banach lattices. 
This problem is
not at all new, but 
we refrain from  even trying to give a list of
relevant literature at this point. (However, cf.{} Section
\ref{s.classical} below.) Rather,
let us stress some features that distinguish our approach from most
others.

Classically, asymptotics of operator semigroups focusses on {\em
  strongly  continuous}  one-parameter semigroups $T=(T_t)_{t\in
  [0,\infty)}$. However, there are important instances 
of operator semigroups which lack strong continuity, e.g., 
the heat semigroup on the space of
bounded continuous functions on $\R$ or on the space of finite measures over
$\R$ or, in an abstract context, dual semigroups of 
$C_0$-semigroups on non-reflexive spaces. (See the recent paper \cite{KunzeNL}
for a more involved concrete example.) Hence, there is a need 
for results on asymptotics beyond $C_0$-semigroups.

Secondly, besides the ``continuous time'' case just mentioned, there
is an even more fundamental interest in ``discrete time'', i.e., in the asymptotics of the powers
$T^n$ of a single operator $T$. From a systematic point of view, it is
desirable to try to cover both cases at the same time as far as
possible. This is the reason why we 
consider general semigroup representations $(T_s)_{s\in S}$---where
$(S,+)$ is an Abelian semigroup with zero element $0$---%
as bounded operators on a Banach space
without any further topological assumptions (see Section \ref{s.rep}).

It may not come as a surprise that non-trivial results about
asymptotics can be obtained---even in such a general setting---by employing
the so-called Jacobs--deLeeuw--Glicks\-berg (JdLG) theory for compact (Abelian)
semitopological semigroups. In fact, the role
of the JdLG-theory for asymptotics is well-established. Usually,
it is applied to the  semigroup
\[ \calT := \cl\{ T_s \suchthat s\in S\}
\]
(closure in the strong or weak operator topology) and hence rests on 
a ``global'' compactness  requirement for the whole semigroup. This
is appropriate for the abovementioned ``classical'' cases
(powers of a single operator, $C_0$-semigroups)
because there the strong compactness of $\calT$ is necessary for the
convergence of $T$.  However, typical examples
of non-continuous shift semigroups (left shift on $\Ell{\infty}(0,1)$
or $\co(\R_+)$) show that such semigroups may converge strongly to
$0$ without being relatively strongly compact. (The left shift
semigroup on $\co(\R_0)$ is not even eventually relatively strongly compact.)

In order to cover also these more general situations, we introduce the set
\[  \calT_\infty := \bigcap_{t\in S} \cl\{ T_{s{+}t} \suchthat s\in S\}
\]
which we call the {\emdf semigroup at infinity}. It turns out that 
$\calT_\infty$ is a good replacement for $\calT$ {\em if} 
$\calT_\infty$ is strongly compact and not empty. In particular, its
minimal projection, $P_\infty$, satisfies
\[ P_\infty x = 0 \quad \iff\quad \lim_{s\in S} T_s x = 0 \qquad (x\in
E)
\]
(see Theorem \ref{rep.t.Pinfty}).
Not surprisingly, in the mentioned ``classical''
cases the condition that $\calT_\infty$ be strongly compact and not empty 
is actually equivalent to relative strong
compactness of  the original semigroup (cf. Remark \ref{rep.r.sac}),  
and hence the use of $\calT_\infty$ is then---in
some sense---unnecessary. In a general appoach going beyond
these classical cases, however,  it is our means of choice.

\medskip
In a next step (Section \ref{s.abs}), we describe a convenient
set-up that warrants the crucial property (i.e.: $\calT_\infty$ is 
non-empty and compact). To this end, 
the new notion of {\em quasi-compactness
  relative to a subspace} is introduced. This condition generalizes the
traditional notion of quasi-compactness of a semigroup and plays a
central role in our first main result (Theorem \ref{abs.t.main}).

\medskip
After these completely general considerations, and from then on until
the end of the paper, we confine our attention to
{\em  positive} operator semigroups on Banach {\em lattices}. 
Our second  main result,
Theorem \ref{psg.t.main}, appears to be a mere 
instantiation of Theorem  \ref{abs.t.main} in such a setting.
However, the theorem gains its
significance from the fact that the
required quasi-compactness condition is intimately linked to
the well-known property of {\em AM-compactness} of positive
operators which, in turn, occurs frequently when dealing with
``concrete'' positive semigroups arising in evolution equations and
stochastics (see Appendix \ref{app.int}).

\medskip
 
The main thrust of Theorem \ref{psg.t.main} is that it reduces 
the study 
of the asymptotic properties of certain positive semigroups 
to the following special case: $E$ is an atomic
Banach lattice and $T$ embeds into a strongly compact group 
of positive invertible operators thereon. Hence, in the subsequent Section \ref{s.grp}
we analyze this situation thoroughly and establish  a structure theorem
 which is reminiscent of the Peter--Weyl theorem
and its consequences for Banach space representations of compact
groups (Theorem \ref{grp.t.structure}).

\medskip
Putting the pieces together, in Section \ref{s.conv} we formulate
several  consequences regarding the asymptotic (and
spectral-theoretic)  properties of a positive semigroup $T =
(T_s)_{s\in S}$  satisfying the
conditions of Theorem \ref{psg.t.main}. 
Particularly important here is the fact that 
we can identify (in Theorem \ref{conv.t.main1}) 
two intrinsic properties of the semigroup $S$ that imply
the convergence of $T$: an algebraic one (essential divisibility of
$S$) and a topological one (under the condition that
 $S$ is topological and $T$ is
continuous). Interestingly enough, both are applicable in the case $S=
\R_+$, but none of them in the case $S= \N_0$. 

\medskip
Finally, in Section \ref{s.classical} we review the pre-history of the
problem and show how the results obtained so far by other people
relate to (or are covered by) our findings. Interestingly, in this
pre-history the spectral--theoretic results (``triviality of the
point spectrum'') have taken a much more prominent role than the
asymptotic results. We end the paper with a new and unifying result in
this direction (Theorem \ref{clas.t.gluha}).

\subsection*{Relevance and Relation to the  Work of Others}

The present work can be understood as a continuation and
further development of the recent paper
\cite{GerlachConvPOS} by M.~Gerlach and the first
author. Many 
ideas in the present paper can already be found in \cite{GerlachConvPOS}, like that 
one can go beyond strong continuity by combining the JdLG-theory with the
concept of AM-compactness; or that under AM-compactness conditions
a purely algebraic property
(divisibility) of the underlying semigroup suffices to guarantee the
strong convergence of the representation.

However, we surpass our reference in many
respects:
\begin{aufziii}
\item A general Banach space principle (Theorem \ref{abs.t.main}) is
  established and identified as the theoretical core which
  underlies the results of \cite{GerlachConvPOS}. This principle, which is based
  on the new notion of ``quasi-compactness relative to a subspace''
and on our systematic study of the ``semigroup at infinity'' (Theorem \ref{rep.t.Pinfty}),
  has potential applications in the asymptotic theory of semigroups
  without any positivity assumptions.

\item As a consequence of 1), the
  two main results from \cite{GerlachConvPOS}, Theorem 3.7 and
  Theorem 3.11, are now unified. Moreover, our results hold without
requiring the semigroup to have a quasi-interior fixed point.

\item A general structure theorem for representations
of compact groups on atomic Banach lattices (Theorem
\ref{grp.t.structure}) is established. This result is auxiliary to---but
actually completely independent of---our principal enterprise, the asymptotics of operator semigroups. 
From a different viewpoint, it is a contribution to the theory of
positive group representations as promoted by Marcel de Jeu (Leiden)
and his collaborators.

\item A new spectral-theoretic result (Theorem \ref{clas.t.gluha})
  about the properties of unimodular eigenvalues  is establied. 
\end{aufziii}

\medskip
To understand the relevance of the results obtained in this paper, one best looks into the
pre-history of its predecessor \cite{GerlachConvPOS}. We decided
to place such a historical narrative {\em after} our systematic
considerations, in Section \ref{s.classical}.
That gives us the possibility to then refer freely to the results proven
before, and to explain in detail their relation to the results obtained
earlier by other people.

\subsection*{Notation and Terminology}

We use the letters $E,F, \dots$ generically to denote Banach spaces or
Banach lattices over the scalar field $\K \in \{ \R , \C\}$.
The space of bounded linear operators is denoted by
$\BL(E;F)$, and $\BL(E)$ if $E= F$; the space of compact operators
is $\CO(E;F)$, and $\CO(E)$ if $E= F$.
 Frequently, we shall endow
$\BL(E;F)$  with  the strong operator
  topology (sot). To indicate this we use terms like ``sot-closed'',
``sot-compact'' or speak of ``strongly closed'' or ``strongly  compact''
sets etc. A similar convention applies when the weak operator
topology (wot) is considered. Whereas for a set $A\subseteq E$ the set
$\cl_{\sigma}(A)$ is the closure of $A$ in the weak ($= \sigma(E;E')$)
topology on $E$, the sot-closure and the wot-closure of a set
$M \subseteq \BL(E;F)$ 
are denoted by
\[ \cl_{\sot}(M)\quad \text{and}\quad \cl_{\wot}(M),
\]
respectively.   We shall frequently use the following auxiliary
result, see \cite[Corollary~A.5]{Engel2000}.

\begin{lem}\label{intro.l.sotcp}
Let $E$ be a Banach space. Then a bounded subset $M \subseteq \BL(E)$ is
relatively strongly (weakly) compact if and only if the orbit
\[ Mx := \{ Tx \suchthat T \in M\}
\]
is relatively (weakly) compact for all $x$ from a dense subset of $E$. 
\end{lem}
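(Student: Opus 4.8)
The plan is to realise $\BL(E)$, equipped with the strong operator topology (and, for the parenthetical version, with the weak operator topology), as a topological subspace of the product $\prod_{x\in E}(E,\tau)$ --- where $\tau$ is the norm topology, resp.\ the weak topology --- via the injection $T\mapsto(Tx)_{x\in E}$. By the very definition of sot (resp.\ wot) this injection is a homeomorphism onto its image, so the claim reduces to showing that the closure of $M$ in $\prod_{x\in E}(E,\tau)$ is compact \emph{and} is contained in (the image of) $\BL(E)$. The forward implication is then immediate: for fixed $x$ the evaluation $T\mapsto Tx$ is $\tau$-continuous, hence carries a relatively compact set to a relatively compact one. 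The work is in the converse, so assume $M$ bounded, say $\sup_{T\in M}\norm{T}=:c<\infty$, and $Mx$ relatively $\tau$-compact for all $x$ in a dense set $D\subseteq E$.

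\emph{Step 1 --- upgrade the hypothesis to all $x\in E$.} In the norm case I would run the standard $3\epsilon$-argument: given $x\in E$, choose $y\in D$ with $\norm{x-y}<\epsilon$, cover the compact set $\cls{My}$ by finitely many $\epsilon$-balls with centres $T_iy$, $T_i\in M$, and deduce that the balls $\{\,z:\norm{z-T_ix}<(2c+1)\epsilon\,\}$ cover $Mx$; thus $Mx$ is totally bounded and, $E$ being complete, $\cls{Mx}$ is compact. In the weak case totally-bounded reasoning is unavailable, so I would instead pass to the bidual and use that a bounded set $A\subseteq E$ is relatively weakly compact iff its $\sigma(E'';E')$-closure $\cls{A}^{w^*}$ is contained in $E$. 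For $y\in D$ with $\norm{x-y}<\epsilon$ one has $Mx\subseteq My+c\epsilon\,\Ball_{E''}$ (with $\Ball_{E''}$ the closed unit ball), and since $\cls{My}^{w^*}$ and $c\epsilon\,\Ball_{E''}$ are both $w^*$-compact by Banach--Alaoglu, their sum is $w^*$-closed; hence $\cls{Mx}^{w^*}\subseteq\cls{My}^{w^*}+c\epsilon\,\Ball_{E''}\subseteq E+c\epsilon\,\Ball_{E''}$. Letting $\epsilon\downarrow 0$ and using $\bigcap_{\epsilon>0}(E+\epsilon\,\Ball_{E''})=E$ (valid since $E$ is norm-closed in $E''$) gives $\cls{Mx}^{w^*}\subseteq E$, i.e.\ $Mx$ is relatively weakly compact. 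I expect this weak-case step to be the main obstacle: extracting a global conclusion from a merely dense hypothesis, without recourse to metric or sequential arguments, is precisely what the passage to the bidual makes painless.

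\emph{Step 2 --- Tychonoff.} Since each $\cls{Mx}$ ($x\in E$) is now $\tau$-compact, the product $K:=\prod_{x\in E}\cls{Mx}$ is compact; as each factor is $\tau$-closed in $E$, $K$ is closed in $\prod_{x\in E}(E,\tau)$, so the closure of $M$ in the product is a closed subset of $K$ and therefore compact. \emph{Step 3 --- the closure consists of operators.} Let $\xi=(\xi_x)_{x\in E}$ lie in that closure and pick a net $(T_i)\subseteq M$ with $T_iz\to\xi_z$ in $(E,\tau)$ for every $z\in E$. Uniqueness of limits in the Hausdorff space $(E,\tau)$, applied to $T_i(\alpha x+\beta y)=\alpha T_ix+\beta T_iy$, shows that $x\mapsto\xi_x$ is linear; and $\norm{\xi_x}\le\liminf_i\norm{T_ix}\le c\norm{x}$, using continuity (resp.\ weak lower semicontinuity) of the norm, shows it is bounded. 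Hence $S\colon x\mapsto\xi_x$ lies in $\BL(E)$, so the product-closure of $M$ coincides with its sot- (resp.\ wot-)closure and is compact; that is, $M$ is relatively strongly (resp.\ weakly) compact.
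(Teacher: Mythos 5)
Your proof is correct. Note that the paper does not prove this lemma at all---it is quoted from the literature with a reference to \cite[Corollary~A.5]{Engel2000}---so there is no in-paper argument to compare against; your argument (embedding $(\BL(E),\mathrm{sot})$ resp.\ $(\BL(E),\mathrm{wot})$ into the product $\prod_{x\in E}(E,\tau)$, upgrading the hypothesis from the dense set $D$ to all of $E$ by a $3\veps$-argument resp.\ by the bidual criterion for relative weak compactness, and then applying Tychonoff together with the observation that pointwise limits of a norm-bounded net of operators are again bounded operators) is essentially the standard one underlying that reference, and all steps, including the less routine weak-case density upgrade via $\cls{Mx}^{w^*}\subseteq E+c\veps\,\Ball_{E''}$, check out.
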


The set $\BL(E)$
is a semigroup with respect to operator multiplication. Operator
multiplication
is sot- and wot-separately continuous, and it is sot-simultaneously
continuous on norm-bounded sets. 

For the definition of a semigroup
as well as for some elementary definitions and results from 
algebraic semigroup theory, see Appendix \ref{app.sgp}.

We shall freely use standard results and notation
from  the theory of Banach
lattices, with \cite{Schaefer1974} and  \cite{Meyer-Nieberg1991}
being our main references. If $E$ is a Banach lattice the set $E_+:=\{
x\in E \suchthat x\ge 0\}$ is its cone of positive elements. In some
proofs we confine tacitly to real Banach lattices, but there should be no
difficulty to extend the arguments to the complex case.

\section{Representations of  Abelian Semigroups}\label{s.rep}

Throughout the article, 
$S$ is an Abelian semigroup (written additively) containing a neutral
element $0$.

Observe that for each 
$s\in S$ the set
\[ s{+}S := \{ s+r \suchthat r \in S \}  \subseteq S
\]  
is a subsemigroup of $S$.  We turn
$S$ into a directed set by letting
\[ s \le t \quad \defiff \quad t \in s{+}S \quad \iff\quad
t{+}S \subseteq s{+}S.
\]
For limits of nets $(x_s)_{s\in S}$ with respect to this direction,
the notation  $\lim_{s\in S} x_s$ is used. Note that $0 \le s$ for all $s\in S$.

\begin{ex}
Observe that in the cases $S= \Z_+$ and $S= \R_+$ the so-defined
direction and the associated notion of limit coincides 
with the usual one.
\end{ex}

\medskip

\noindent
A {\emdf representation} of  $S$ on a Banach space $E$ is any mapping
$T: S \to \BL(E)$ satisfying
\[ T(0) = \Id \quad \text{and}\quad   T(s+t) = T(s)T(t) \qquad (t,s\in S). 
\]
In place of $T(s)$ we also use index notation $T_s$, and
often call $T= (T_s)_{s\in S}$ an {\emdf operator semigroup} (over
$S$ on $E$).  The {\emdf fixed  space} of the representation $T$ is
\[ \fix(T) := \bigcap_{s\in S} \ker(T_s - \Id) = 
\{ x\in E \suchthat T_s x = x \,\,\text{for all $s\in S$}\}.
\]
An operator semigroup $(T_s)_{s\in S}$ is {\emdf bounded} if
\[ M_T := \sup_{s\in S} \norm{T_s} < \infty.
\]
Boundedness has the following useful consequence.

\begin{lem}\label{rep.l.ct0}
Let $T= (T_s)_{s\in S}$ be a bounded operator semigroup on the Banach
space $E$. Then for each vector $x\in E$ the following assertions
are equivalent:
\begin{aufzii}
\item $0 \in \cls{\{T_s x\suchthat s\in S\}}$.
\item $\lim_{s\in S} T_s x = 0$.
\end{aufzii}
\end{lem}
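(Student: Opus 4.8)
The implication \romref{rep.l.ct0b} $\Rightarrow$ \romref{rep.l.ct0a} is immediate: since $T_0 x = x$, the orbit $\{T_s x \suchthat s\in S\}$ is non\hs-empty, and if the net $(T_s x)_{s\in S}$ converges to $0$ then $0$ is a limit point of the orbit, hence lies in its closure.

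For the converse, the plan is to use that the orbit is ``shrinking along tails'' in the following sense: because $S$ is Abelian, $T_{s+r} x = T_r(T_s x)$ for all $r,s\in S$, so once a single point $T_{s_0}x$ of the orbit is small, boundedness forces all later points $T_t x$ with $t\ge s_0$ to be small too. Concretely, let $\varepsilon>0$. Since $0\in\cls{\{T_s x\suchthat s\in S\}}$ and $M_T\ge\norm{T_0}=1>0$, there is some $s_0\in S$ with $\norm{T_{s_0}x}<\varepsilon/M_T$. For an arbitrary $t\ge s_0$ we may, by the very definition of the order on $S$, write $t=s_0+r$ with $r\in S$; then $T_t x = T_{s_0+r}x = T_r T_{s_0} x$, whence $\norm{T_t x}\le \norm{T_r}\,\norm{T_{s_0}x}\le M_T\cdot\varepsilon/M_T=\varepsilon$. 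As $\varepsilon>0$ was arbitrary and $\{t\in S\suchthat t\ge s_0\}$ is a tail of the directed set $S$, this says precisely that $\lim_{s\in S}T_s x=0$.

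I do not expect any real obstacle here; the argument is a two-line estimate. The only points worth a moment's care are that the directed-set order on $S$ is the one defined above (so that $t\ge s_0$ genuinely means $t=s_0+r$ for some $r\in S$, which is what lets us pull $T_{s_0}x$ out), and that $M_T>0$ so that dividing by $M_T$ is legitimate — which is automatic since $\norm{T_0}=1$ whenever $E\neq\{0\}$, the case $E=\{0\}$ being trivial.
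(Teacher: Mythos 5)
Your proof is correct and follows essentially the same route as the paper: pick $s_0$ with $\norm{T_{s_0}x}$ small, then use $T_t = T_r T_{s_0}$ for $t = s_0 + r$ together with the uniform bound $M_T$ to control the whole tail. The only cosmetic difference is that you rescale $\varepsilon$ by $M_T$ up front, whereas the paper simply accepts the bound $\varepsilon M_T$ on the tail; both are fine.
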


\begin{proof}
Suppose that (i) holds
and $\veps > 0$. Then there is $s\in S$ such
that $\norm{T_{s}x} \le \veps$. But then
\[  \norm{T_t x} \le \veps \,M_T \qquad \text{for all $t \in s{+}S$.}
\]
It follows that $\lim_{s\in S} T_s x = 0$, i.e.,
(ii).  The converse is trivial.
\end{proof}

Given an operator semigroup $T= (T_s)_{s\in S}$ on a Banach space $E$,
a subset $A$ of $E$ is called {\emdf $T$-invariant} if 
$T_s(A) \subseteq A$ for all $s\in S$. A closed, $T$-invariant
subspace $F$ of $E$ gives rise to a {\emdf subrepresentation}
by restricting the operators $T_s$ to $F$. 
Such a subrepresentation 
is called {\emdf finite-dimensional ($d$-dimensional)} ($d\in \N$)
if $F$ is finite-dimensional ($d$-dimensional).

A one-dimensional subrepresentation is given by 
a scalar representation $\lambda: S \to \K$ and a non-zero vector 
$u\in E$ such that 
\[   T_s u = \lambda_s u\qquad \text{ for all $s\in S$.}
\] 
The corresponding mapping $\lambda$ is then called an {\emdf
  eigenvalue} of $T$, and $u$ is called a corresponding 
{\emdf eigenvector}.  Obviously, 
\[ \text{$\lambda$ is constant}\quad \gdw\quad \lambda = \car
\quad\gdw\quad u \in \fix(T).
\] 
An eigenvalue $\lambda = 
(\lambda_s)_{s \in S}$ is called {\emdf unimodular} if $\abs{\lambda_s} = 1$
for each $s \in S$. (So the constant eigenvalue $\car$ is unimodular.)
An unimodular eigenvalue $\lambda$ of $T$ is called a {\emdf torsion eigenvalue}
if there is $m \in \N$ such that $\lambda_s^m = 1$ for all $s\in S$.

\medskip
If $E$ is a Banach lattice, a semigroup $(T_s)_{s\in S}$ on  $E$ is 
called {\emdf positive} if the positive cone $E_+$ is 
$T$-invariant, i.e., if each operator $T_s$ is positive. 
And a positive semigroup 
is called {\emdf irreducible} or said to {\emdf act
irreducibly} on $E$ if $\{0\}$ and $E$ are the only
$T$-invariant closed ideals of $E$. (Recall
that a  subspace $J$ of $E$ is an ideal if
it satisfies:\quad
$x\in E, \, y \in J, \,\,\abs{x} \le \abs{y} \quad \dann \quad x\in
J.$)

\subsection*{The Semigroup at Infinity}

Given an operator semigroup $T = (T_s)_{s\in S}$ we write
\[ T_S := \{ T_s \suchthat s\in S\} \,\,\subseteq \,\, \BL(E)
\]
for its range, which is a subsemigroup of $\BL(E)$. And we abbreviate
\[ \calT := \cl_\sot \{T_s \suchthat s\in S\} \quad \text{and}\quad
\calT_s := \cl_\sot \{T_t \suchthat t \ge s\}\quad (s\in S),
\]
and call
\[ \calT_\infty := \bigcap_{s\in S} \calT_s =  
\bigcap_{s\in S} \cl_\sot \{T_t \suchthat t \ge s\}
\]
the associated {\emdf semigroup at infinity}. In effect,
$\calT_\infty$ is the set of sot-cluster points of the net $(T_s)_{s\in
  S}$.

Note that $\calT_\infty$ is multiplicative and even satifies 
\[ \calT \cdot \calT_\infty \subseteq \calT_\infty.
\]
But it may be empty (in which case it
is, according to our definition in Appendix \ref{app.sgp}, 
not a semigroup\footnote{We apologize for this little
  abuse of terminology.}.)

\subsection*{The JdLG-Splitting Theory}

One of the principal methods to prove strong convergence of a bounded
semigroup is to employ the splitting theory of Jacobs, de Leeuw and
Glicksberg as detailed, e.g., in \cite[Chapter~16]{Eisner2015}.  
Usually, this theory is applied to the semigroup
$\calT$ or to its wot-counterpart $\cl_\wot\{ T_s \suchthat
s\in S\}$. In contrast, we shall apply it to $\calT_\infty$. If 
$\calT_\infty$ is a strongly compact semigroup, the JdLG-theory
tells that  it contains a unique minimal idempotent, which we denote
by $P_\infty$. (Minimality means that $P_\infty \cdot \calT_\infty$ is
a minimal ideal in $\calT_\infty$.) The range of $P_\infty$ is denoted
here by 
\[ E_\infty := \ran(P_\infty).
\]
Observe that $Q T_s = T_sQ$ for each $s\in S$ and each $Q\in
\calT_\infty$.
In particular, $E_\infty$ is $\calT$-invariant.

\begin{thm}\label{rep.t.Pinfty}\label{rep.t.spec}
Let $T =  (T_s)_{s\in S}$
be  a  bounded  operator  semigroup  on  the  Banach
space $E$ such  that  the  associated  semigroup  at  infinity,
$\calT_\infty$, is strongly compact and non-empty.
Then the following additional assertions hold:
\begin{aufzi}
\item $\calT P_\infty = \calT_\infty P_\infty$.
\item $T$ is relatively strongly compact on $E_\infty$, i.e.,
\[ \calG := \cl_\sot\{ T_s\res{E_\infty} \suchthat s\in
  S\}\subseteq \BL(E_\infty)
\]
is  a strongly compact group of invertible operators on $E_\infty$.
Moreover, 
\[ \calG =  \calT\res{E_\infty} := \{ Q \res{E_\infty} \suchthat Q \in \calT\}.
\]

\item For each $x \in E$ the following statements are equivalent:
\begin{aufzii}
\item $P_\infty x= 0$.
\item $0 \in \cl_\sigma\{T_s x \suchthat s\in S\}$.
\item  $\lim_{s\in S} T_s x = 0$. 
\item  $Rx = 0$ for some/all $R\in \calT_\infty$.
\end{aufzii}

\item If $(\lambda_s)_{s\in S}$ is a unimodular eigenvalue of $T$ with
eigenvector $0 \neq x\in E$, then $x\in E_\infty$ and there is a unique eigenvalue
$\mu = (\mu_Q)_{Q\in \calG}$ of $\calG$ such that $\lambda_s =
\mu_{T_s}$ for all $s\in S$.

\item If $\mu = (\mu_Q)_{Q\in \calG}$ is an eigenvalue of $\calG$ on
  $E_\infty$, then $\lambda_s := \mu_{T_s}$\, ($s\in S$) is an
  unimodular eigenvalue of $T$.
\end{aufzi}
(We suppose $\K = \C$ for assertions {\rm d)} and {\rm e)}.)
\end{thm}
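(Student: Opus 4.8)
\emph{Proof strategy.} The plan is to first squeeze everything out of the algebraic structure of $\calT_\infty$. Since $\calT_\infty$ is sot-compact and norm-bounded, operator multiplication is jointly sot-continuous on it, so $\calT_\infty$ is a commutative compact topological semigroup and the JdLG-splitting theory applies: its minimal ideal $G_\infty := P_\infty\calT_\infty$ is a compact topological group with unit $P_\infty$. Assertion a) then drops out at once: $\calT_\infty P_\infty\subseteq\calT P_\infty$ since $\calT_\infty\subseteq\calT_0 = \calT$, and conversely $QP_\infty\in\calT_\infty$ for every $Q\in\calT$ (because $\calT\cdot\calT_\infty\subseteq\calT_\infty$ and $P_\infty\in\calT_\infty$), whence $QP_\infty = (QP_\infty)P_\infty\in\calT_\infty P_\infty$. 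Recall also that $P_\infty$ commutes with each $T_s$ and hence---by separate sot-continuity of multiplication---with every element of $\calT$, so that $E_\infty$ is $\calT$-invariant and $\calT\res{E_\infty}$ makes sense.

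For b) I would study the restriction homomorphism $\rho\colon G_\infty\to\BL(E_\infty)$, $\rho(R):=R\res{E_\infty}$. It is well defined (each $R\in G_\infty$ satisfies $R=P_\infty R$, so $R(E_\infty)\subseteq\ran(P_\infty)=E_\infty$), it is a homomorphism with $\rho(P_\infty)=\Id_{E_\infty}$, it is injective (from $R_1 = R_1P_\infty$ on $E$ etc.), and it is sot--sot continuous; hence $\rho(G_\infty)$ is an sot-compact subgroup of the group of invertible operators on $E_\infty$, with unit $\Id_{E_\infty}$. The identifications $\rho(G_\infty)=\calG=\calT\res{E_\infty}$ are then routine: each $T_s\res{E_\infty}=(P_\infty T_sP_\infty)\res{E_\infty}\in\rho(G_\infty)$, so $\calG\subseteq\rho(G_\infty)$; any $R\in G_\infty\subseteq\calT$ is an sot-limit of a net $(T_{s_\alpha})$, so $\rho(R)$ is the sot-limit of $(T_{s_\alpha}\res{E_\infty})$, i.e.\ $\rho(R)\in\calG$; and $Q\res{E_\infty}=(QP_\infty)\res{E_\infty}$ for $Q\in\calT$ with $QP_\infty\in\calT P_\infty=G_\infty$ by a), giving $\calT\res{E_\infty}=\rho(G_\infty)$.

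Part c) I would prove in the cycle $\text{(i)}\Rightarrow\text{(iii)}\Rightarrow\text{(ii)}\Rightarrow\text{(iv)}\Rightarrow\text{(i)}$, together with the equivalence of the ``for some'' and ``for all'' versions of (iv). The engine is a single estimate: if $\inf_{s\in S}\norm{T_sx}=\delta$, then $\norm{Rx}\ge\delta$ for \emph{every} $R\in\calT_\infty$, since such $R$ is the sot-limit of a subnet $(T_{s_\alpha})$ of $(T_s)_{s\in S}$ and $\norm{Rx}=\lim_\alpha\norm{T_{s_\alpha}x}\ge\delta$. Taking $R=P_\infty$: if $P_\infty x=0$ then $\delta=0$, so $0\in\cls{\{T_sx\st s\in S\}}$ and Lemma~\ref{rep.l.ct0} yields (iii); the same subnet then has $T_{s_\alpha}x\to 0$, hence $Rx=0$ for all $R\in\calT_\infty$, i.e.\ (iv). The step $\text{(iii)}\Rightarrow\text{(ii)}$ is trivial; $\text{(iv, some }R)\Rightarrow\text{(i)}$ holds because $P_\infty R\in G_\infty$ has an inverse $R'\in G_\infty$ with $R'P_\infty R=P_\infty$, whence $P_\infty x=R'P_\infty(Rx)=0$; and for $\text{(ii)}\Rightarrow\text{(iv)}$ one takes a subnet with $T_{s_\alpha}x\to 0$ weakly, refines it so that $T_{s_\alpha}\res{E_\infty}$ converges in sot to some $Q=R\res{E_\infty}\in\calG$ (with $R\in G_\infty$, using compactness of $\calG$ from b)), and reads off $Rx=RP_\infty x=QP_\infty x=0$ from $T_{s_\alpha}P_\infty x\to 0$ weakly.

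Finally d) and e). For d), $\abs{\lambda_s}=1$ forces $\norm{T_sx}=\norm{x}$ for all $s$, so by c) $P_\infty x\neq 0$; moreover $x-P_\infty x\in\ker P_\infty$ and $T_s(x-P_\infty x)=\lambda_s(x-P_\infty x)$ is again of constant norm, so c) forces $x=P_\infty x\in E_\infty$. For $Q\in\calG$ choose a net with $T_{s_\alpha}\res{E_\infty}\to Q$ in sot; then $Qx=\lim_\alpha\lambda_{s_\alpha}x$ forces $\lambda_{s_\alpha}\to\mu_Q$ in $\K$ (as $x\neq 0$) with $Qx=\mu_Q x$, the value $\mu_Q$ being independent of the net; thus $Q\mapsto\mu_Q$ is a scalar representation of $\calG$ with eigenvector $x$ and $\mu_{T_s\res{E_\infty}}=\lambda_s$, and uniqueness follows since any eigenvalue of $\calG$ is sot-continuous (of the form $Q\mapsto\varphi(Qy)/\varphi(y)$ for an eigenvector $y$ and $\varphi$ with $\varphi(y)\neq 0$) and $\{T_s\res{E_\infty}\}$ is sot-dense in $\calG$. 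For e), let $0\neq x\in E_\infty$ be an eigenvector for $\mu$; then $T_sx=T_s\res{E_\infty}x=\mu_{T_s\res{E_\infty}}x=\lambda_s x$, so $\lambda=\mu\circ(T_\bullet\res{E_\infty})$ is an eigenvalue of $T$ with eigenvector $x$; and since $\calG$ is a \emph{compact} group, $\mu$ is a continuous homomorphism from $\calG$ into the multiplicative group $\C\setminus\{0\}$, whose image is therefore a compact subgroup and hence lies in the unit circle, giving $\abs{\lambda_s}=1$. The main obstacle is assertion b): one must correctly invoke the structure theory of commutative compact (semitopological) semigroups to know that $P_\infty\calT_\infty$ is a group with unit $P_\infty$, and then check that restriction to $E_\infty$ realizes this group faithfully as $\calG$; granted this and the norm estimate above, parts a), c), d), e) are essentially bookkeeping.
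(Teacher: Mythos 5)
Your proof is correct and follows essentially the same route as the paper: apply the JdLG-theory to the compact Abelian semigroup $\calT_\infty$, identify its minimal ideal $P_\infty\calT_\infty$ as a compact group whose restriction to $E_\infty$ realizes $\calG$, and derive c)--e) from the resulting norm estimate and the sot-density of $T_S\res{E_\infty}$ in $\calG$. The only deviations are organizational and harmless: your cycle of implications in c) differs from the paper's --- you close (ii)$\Rightarrow$(iv) by extracting an sot-convergent subnet in the compact group $\calG$ and (iv)$\Rightarrow$(i) via the group inverse in $P_\infty\calT_\infty$, whereas the paper instead uses that the weak and strong closures of the relatively strongly compact orbit $\{T_sP_\infty x \suchthat s\in S\}$ coincide and then cycles back through (iii).
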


\begin{proof}
a)\ Since $\calT \calT_\infty \subseteq \calT_\infty \calT$, we have
$\calT P_\infty = \calT P_\infty P_\infty \subseteq \calT_\infty
P_\infty \subseteq \calT P_\infty$.

\prfnoi
b)\ By a) we have $\calT\res{E_\infty} = \calT_\infty \res{E_\infty}$, and
the latter is a strongly compact group of invertible operators on $E_\infty$
by the JdLG-theory.  Since restriction is a sot-continuous operator from $\BL(E)$ to
$\BL(E_\infty; E)$, $\calT\res{E_\infty} \subseteq \calG$. The
converse inclusion follows from $\calG P_\infty \subseteq \calT$,
which is true because $P_\infty \in \calT$.

\prfnoi
c)\ If $Rx = 0$ for {\em all} $R\in \calT_\infty$, then clearly
(i) holds, and (i) implies that $Rx=0$ for {\em some} $R\in
\calT_\infty$. On the other hand, this latter statement obviously
implies  $0 \in \cls{\{T_s x\suchthat s\in S\}}$, which is equivalent
to $\lim_{s\in S} T_s x = 0$, i.e., (iii). 

If, in turn,  (iii)  holds  and $\veps > 0$  is  fixed,  then  there  is
$s\in S$ such  that $\{ T_tx \suchthat t \ge  s \} \subseteq
\Ball[0,\veps]$.  Hence,  also $\calT_\infty x \subseteq
\Ball[0,\veps]$.  As $\veps > 0$  was  arbitrary, $\calT_\infty x =
\{ 0\}$, i.e., $Rx= 0$  for all $R\in \calT_\infty$. 

Finally, (iii) obviously implies (ii). Conversely,
starting from (ii) we apply $P_\infty$ to obtain 
\[ 0 \in \cl_\sigma\{ T_sP_\infty x \suchthat s\in S\}.
\]
However, by b) the set $\{ T_sP_\infty x \suchthat s\in S\}$
is relatively strongly compact and hence its weak and its strong 
closures must coincide. This yields
\[  0 \in \cls{\{ T_sP_\infty x \suchthat s\in S\}},
\] 
which implies $P_\infty x = P_\infty(P_\infty x) = 0$ by what
we have already shown. 

\prfnoi
d)\ Let $0\neq x\in E$ be an eigenvector for the unimodular eigenvalue  
$(\lambda_s)_{s\in S}$ of $T$. Define $y := x - P_\infty x$. Then 
$P_\infty y = 0$ and hence $T_s y \to 0$. On the other hand, since
$P_\infty$ commutes with every $T_s$, $T_sy = \lambda_s y$ for all
$s\in S$. As $\abs{\lambda_s} = 1$, it follows that $y = 0$ and hence
$x\in E_\infty$.  The remaining statement now follows easily since $\C
x$ is $T$-invariant and $T_S\res{E_\infty}$ is dense in  $\calG$. 

\prfnoi
e)\ is obvious.
\end{proof}

As a corollary we obtain the following
characterization of the strong convergence of a semigroup.

\begin{cor}
For a  bounded  operator  semigroup  $T =  (T_s)_{s\in S}$
on a  Banach   space $E$ the following assertions are equivalent:
\begin{aufzii}
\item  $T$ is strongly convergent;
\item  $\calT_\infty$ is a singleton;
\item  $\calT_\infty$ is non-empty and strongly compact and acts as
  the identity on $E_\infty$;
\item  $\calT_\infty$ is non-empty and strongly compact and $T$
  acts as the identity on $E_\infty$.
\end{aufzii}
In this case:\quad $\lim_{s\in S} T_s = P_\infty$.
\end{cor}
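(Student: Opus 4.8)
The plan is to establish (i) $\Rightarrow$ (ii), (ii) $\Rightarrow$ (i), (ii) $\Leftrightarrow$ (iii) and (ii) $\Leftrightarrow$ (iv), reading off the concluding identity $\lim_{s\in S}T_s=P_\infty$ from the argument for (ii) $\Rightarrow$ (i). Everything rests on Theorem~\ref{rep.t.Pinfty}; the facts from it that carry the proof are the topological splitting $E=E_\infty\oplus\ker(P_\infty)$ with ``$\lim_{s}T_sx=0$ precisely for $x\in\ker(P_\infty)$'' (part~(c)), and the identification $\calG=\calT_\infty\res{E_\infty}$ (parts~(a) and~(b)). I will also use that, by construction, $\calT_\infty$ is exactly the set of sot-cluster points of the bounded net $(T_s)_{s\in S}$, and that each $Q\in\calT_\infty$ commutes with $P_\infty$ (since $S$ is Abelian, $\calT$, hence $\calT_\infty\subseteq\calT$, is commutative), so such a $Q$ leaves both $E_\infty$ and $\ker(P_\infty)$ invariant.

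First I would treat (i) $\Rightarrow$ (ii): if $T$ converges strongly then the sot-limit $P$ of $(T_s)_{s\in S}$ exists in $\BL(E)$ (it is linear and bounded by $M_T$), and, the strong operator topology being Hausdorff, $P$ is the only sot-cluster point of $(T_s)$; hence $\calT_\infty=\{P\}$. This step uses no prior result. For (ii) $\Rightarrow$ (iii),~(iv),~(i): a singleton $\calT_\infty$ is trivially non-empty and strongly compact, and (being multiplicative) its unique element is the minimal idempotent $P_\infty$; thus Theorem~\ref{rep.t.Pinfty} applies. Since $P_\infty$ has range $E_\infty$, it restricts to the identity there, which is (iii). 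By parts~(a) and~(b), $\calG=\calT_\infty\res{E_\infty}=\{\Id_{E_\infty}\}$, so $T_s\res{E_\infty}=\Id_{E_\infty}$ for every $s\in S$, which is (iv). Finally, writing $x=P_\infty x+(x-P_\infty x)$ for $x\in E$: the first summand lies in $E_\infty$ and is fixed by every $T_s$, the second lies in $\ker(P_\infty)$ so $T_s(x-P_\infty x)\to 0$ by part~(c); hence $T_s x\to P_\infty x$ for all $x$, which is (i) together with $\lim_{s\in S}T_s=P_\infty$.

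It remains to close the loop with (iii) $\Rightarrow$ (ii) and (iv) $\Rightarrow$ (ii). In both cases $\calT_\infty$ is non-empty and strongly compact, so Theorem~\ref{rep.t.Pinfty} is available, and it suffices to show that each $Q\in\calT_\infty$ equals $P_\infty$. Under~(iv) one first notes $\calG=\cl_\sot\{T_s\res{E_\infty}\suchthat s\in S\}=\{\Id_{E_\infty}\}$, hence (parts~(a),~(b)) $\calT_\infty\res{E_\infty}=\{\Id_{E_\infty}\}$; so the property ``every element of $\calT_\infty$ restricts to $\Id_{E_\infty}$ on $E_\infty$'', which is precisely assumption~(iii), holds under~(iv) too. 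Now fix $Q\in\calT_\infty$: on $E_\infty$ it agrees with $P_\infty$ (both restrict to $\Id_{E_\infty}$), and on $\ker(P_\infty)$ it again agrees with $P_\infty$ (both vanish there --- $Q$ by part~(c), and $P_\infty$ because that is its kernel). Since $E=E_\infty\oplus\ker(P_\infty)$, we get $Q=P_\infty$, hence $\calT_\infty=\{P_\infty\}$.

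I do not expect a genuine obstacle; the argument is essentially bookkeeping built on Theorem~\ref{rep.t.Pinfty}. The one conceptual point --- and the only place where it might plausibly break --- is the observation that a member of $\calT_\infty$ is completely determined by its restriction to $E_\infty$, because it must annihilate $\ker(P_\infty)$ (part~(c)). This is exactly what makes the at-first-sight weak conditions~(iii) and~(iv), which constrain only the behaviour on the summand $E_\infty$, strong enough to collapse $\calT_\infty$ to the single idempotent $P_\infty$.
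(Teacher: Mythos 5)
Your proposal is correct and uses essentially the same ingredients as the paper's proof: the splitting $E=E_\infty\oplus\ker(P_\infty)$, the equivalence in Theorem~\ref{rep.t.Pinfty}.c) to show that every $Q\in\calT_\infty$ annihilates $\ker(P_\infty)$ and hence equals $P_\infty$, and the decomposition $T_s=T_sP_\infty+T_s(\Id-P_\infty)$ for the convergence statement. The only difference is bookkeeping (the paper runs the cycle (i)$\Rightarrow$(ii)$\Rightarrow$(iii)$\Rightarrow$(iv)$\Rightarrow$(i), while you prove the implications in a star pattern around (ii)), which does not change the substance.
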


\begin{proof}
(i)$\dann$(ii): If $T$ is strongly convergent with $P := \lim_{s\in S} T_s$ being its limit, then
$\calT_\infty= \{P\}$ is a singleton.

\prfnoi
(ii)$\dann$(iii): If $\calT_\infty=\{P\}$ is a singleton, then it is
clearly non-empty and strongly compact. It follows that $P =
P_\infty$, and hence $\calT_\infty$ acts as the identity on
$E_\infty$.

\prfnoi
(iii)$\dann$(iv):  Suppose that $\calT_\infty$ is non-empty and
strongly compact and acts  as the identity on $E_\infty$. Let
$Q\in \calT_\infty$. Then, by the equivalence 
(i)$\gdw$(iv) in Theorem \ref{rep.t.Pinfty}.c), 
$Q(\Id - P_\infty) = 0$ and hence $Q = QP_\infty = P_\infty$. So it
follows that $\calT_\infty= \{P_\infty\}$. Moreover,  
since $T_s \calT_\infty \subseteq \calT_\infty$ for each $s\in
S$, we obtain $T_sP_\infty = P_\infty$ and hence $T_s = \Id$ on
$E_\infty$ for all $s\in S$.  

\prfnoi
(iv)$\dann$(i):  Suppose that (iv) holds. Then
\[ \lim_{s\in S} T_s = \lim_{s\in S} (T_sP_\infty + T_s(\Id -
P_\infty)) = P_\infty  + \lim_{s\in S} T_s(\Id - P_\infty) = P_\infty
\]
strongly, by the equivalence (i)$\gdw$(iii) of Theorem \ref{rep.t.Pinfty}.c).
\end{proof}

Theorem  \ref{rep.t.Pinfty}  and its corollary yield  
the  following  strategy  to  prove  strong  convergence  of  an
operator semigroup:
\begin{aufziii}   
\item Show that $\calT_\infty$ is non-empty and strongly compact. 
\item Show that  $\calT_\infty$ (or, equivalently,  $T$) acts as the identity on $E_\infty:= \ran(P_\infty)$.
(For this one may employ the additional information 
that $\calT$ acts on $E_\infty$   as a compact group.)
\end{aufziii}

\begin{rem}\label{rep.r.sap}
Suppose that  $T= (T_s)_{s\in S}$ is a strongly relatively compact operator
semigroup on $E$  with minimal idempotent $P \in \calT$. 
Then, of course,  $\calT_\infty$ is non-empty and strongly
compact, and hence a closed ideal of
$\calT$. It follows from the minimality of $P$ in $\calT$ and
$P_\infty$ in $\calT_\infty$ that
\[  P \calT \subseteq P_\infty \calT \subseteq P_\infty \calT_\infty
\subseteq P \calT_\infty \subseteq P \calT.
\]
Hence $P\calT = P_\infty \calT_\infty$, which implies that $P =
P_\infty$. So, in the case that $T$ is relatively strongly compact,
passing to the semigroup at infinity yields the same
JdLG-decomposition of $E$ as working with $\calT$. 
\end{rem}

We end this section with a technical, but useful characterization 
of the property that  $\calT_\infty$
is non-empty and compact.

\begin{prop} \label{rep.p.sac}
For a  bounded  operator  semigroup  $T =  (T_s)_{s\in S}$
on a  Banach   space $E$ the following assertions are equivalent:
\begin{aufzii}
\item  $\calT_\infty$ is non-empty and strongly compact.
\item  Every subnet of $(T_s)_{s \in S}$ has a strongly convergent
  subnet.
\item Every universal subnet of $(T_s)_{s \in S}$ is strongly convergent.
\item  For each $x \in E$ every subnet of $(T_s x)_{s \in S}$ has a
  convergent subnet.
\item  For each $x \in E$ every universal subnet of $(T_s x)_{s \in
    S}$  converges.
\end{aufzii}
If $S$ contains a cofinal sequence, then the above
assertions are also equivalent to:
\begin{aufzii} \setcounter{aufzii}{5}
\item For every $x \in E$ and every cofinal sequence $(s_n)_{n \in \bbN} \subseteq 
  S$, the sequence $(S_{s_n}x)_{n \in \bbN}$ has a convergent subsequence.
\end{aufzii}
\end{prop}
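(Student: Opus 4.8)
The plan is to prove the cycle of implications (i)$\Rightarrow$(ii)$\Rightarrow$(iii)$\Rightarrow$(i) and (ii)$\Leftrightarrow$(iv), (iii)$\Leftrightarrow$(v), relying throughout on Lemma \ref{intro.l.sotcp} to pass between statements about $(T_s)_{s\in S}$ in the strong operator topology and statements about the orbits $(T_s x)_{s\in S}$ in $E$. First I would recall the general topological fact that, for a net in a topological space, the following are equivalent: (a) the net has a nonempty compact set of cluster points onto which it "accumulates" in the appropriate sense; (b) every subnet has a convergent subnet; (c) every universal subnet converges. Since $(T_s)_{s\in S}$ is norm-bounded, it lives in a fixed closed ball $B$ of $\BL(E)$, and $\calT_\infty$ is by definition precisely the set of sot-cluster points of the net. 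The key point making (i) equivalent to relative compactness of the tail is: if $\calT_\infty$ is sot-compact and nonempty, then for every sot-neighbourhood $U$ of $\calT_\infty$ there is $s\in S$ with $\{T_t : t\ge s\}\subseteq U$ — this is a standard compactness argument (if not, the tails would have a cluster point outside $U$, contradicting $U\supseteq\calT_\infty$), and it shows the tail $\cl_\sot\{T_t : t\ge s\}$ is eventually contained in arbitrarily small neighbourhoods of the compact set $\calT_\infty$, hence is itself relatively sot-compact; conversely if some tail is relatively sot-compact then the decreasing family $(\calT_s)_{s\in S}$ consists of nonempty compact sets with the finite intersection property (using that $S$ is directed), so $\calT_\infty=\bigcap_s\calT_s$ is nonempty and compact.

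Next, (i)$\Leftrightarrow$(ii): relative sot-compactness of the tail $\{T_t : t\ge s\}$ for some $s$ is equivalent to the statement that every subnet of $(T_s)_{s\in S}$ has a strongly convergent subnet — one direction is immediate from the Bolzano–Weierstrass characterization of compactness in terms of subnets, applied to the tail (and any subnet of $(T_s)_s$ is eventually in every tail, so is eventually a subnet of a relatively compact net); the other direction is the contrapositive. The equivalence (ii)$\Leftrightarrow$(iii) is the purely topological fact that a point (here: a net) has the property "every subnet has a convergent subnet" iff "every universal subnet converges", using that every net has a universal subnet and that a universal net converges as soon as it has a convergent subnet. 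Then (ii)$\Leftrightarrow$(iv) and (iii)$\Leftrightarrow$(v) are obtained by Lemma \ref{intro.l.sotcp}: a bounded subset $M\subseteq\BL(E)$ is relatively sot-compact iff $Mx$ is relatively compact in $E$ for all $x$ in a dense subset, and since $\{T_s : s\ge t\}$ is bounded, relative sot-compactness of this tail is equivalent to relative compactness in $E$ of each orbit-tail $\{T_s x : s\ge t\}$; then repeat the subnet/universal-subnet translation orbit-by-orbit. One subtlety: (iv) and (v) are phrased for \emph{all} $x\in E$, not merely a dense set, but relative compactness of all orbit-tails for $x$ in a dense set forces boundedness-plus-closedness arguments to upgrade to all $x$ — actually here boundedness is already in hand, so the dense-set version and the all-$x$ version coincide via Lemma \ref{intro.l.sotcp} directly.

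Finally, for the equivalence with (vi) under the hypothesis that $S$ contains a cofinal sequence $(t_n)_n$: the implication (iv)$\Rightarrow$(vi) is trivial since a cofinal sequence is a cofinal \emph{net}, hence a subnet. For (vi)$\Rightarrow$(iv) (or directly $\Rightarrow$(i)) the point is that when $S$ has a cofinal sequence, the directed set $S$ is, for limiting purposes, "sequential at infinity": any cofinal sequence $(t_n)_n$ is cofinal in $S$, so $\calT_\infty=\bigcap_n\cl_\sot\{T_t : t\ge t_n\}$ already as a countable intersection, and relative sot-compactness of the metrizable-on-orbits situation can be tested sequentially. Concretely, fix $x$; if every sequence $(T_{t_n}x)_n$ along a cofinal sequence has a convergent subsequence, then I claim $\{T_s x : s\ge t_1\}$ is relatively compact in $E$: its closure is bounded, and to show sequential compactness of the closure it suffices — since $E$ is a metric space — to extract convergent subsequences from sequences in the orbit, which by cofinality of $(t_n)_n$ can be dominated/interleaved with a sequence along $(t_n)_n$; here one uses that for each element $T_s x$ of the orbit with $s\ge t_1$ there is $n$ with $s\le t_n$, together with the boundedness constant $M_T$, to compare $T_s x$ with $T_{t_n}x$ after applying an appropriate $T_r$ — this is the one place requiring a small argument and I expect it to be \textbf{the main obstacle}: making precise that "convergent subsequences along one cofinal sequence" propagates to "relative compactness of the whole tail" without a second-countability or metrizability crutch on $S$ itself. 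The cleanest route is: by Lemma \ref{intro.l.sotcp} reduce to orbits; on orbits $E$ is metric so relative compactness = sequential relative compactness; any sequence in $\{T_s x : s\ge t_1\}$ has indices cofinally below the $t_n$, and by a diagonal argument against the convergent subsequence furnished by (vi) one produces a convergent subsequence of the original. Having (vi)$\Rightarrow$(iv) and the earlier cycle closes the proof.
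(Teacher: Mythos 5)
There is a genuine gap at the very first step of your cycle, the implication (i)$\Rightarrow$(ii). You base it on the claim that if the set $\calT_\infty$ of sot-cluster points is non-empty and compact, then the net $(T_s)_{s\in S}$ is eventually inside every sot-neighbourhood $U$ of $\calT_\infty$, arguing that otherwise ``the tails would have a cluster point outside $U$''. But a tail that is frequently in the closed set $\BL(E)\setminus U$ need not have any cluster point at all: bounded subsets of $\BL(E)$ are not relatively sot-compact, so there is no ambient compactness to produce one. Indeed, for a general bounded net the statement is false: the sequence $e_1,0,e_2,0,e_3,0,\dots$ in $\ell^2$ has cluster set $\{0\}$, which is non-empty and compact, yet it is not eventually in the ball of radius $1/2$ and the subsequence $(e_n)_n$ has no convergent subnet. (Consistently, the paper's Theorem \ref{uni.t.cluster} only asserts the direction ``every subnet has a cluster point $\Rightarrow$ cluster set non-empty and compact'', never the converse.) So (i)$\Rightarrow$(ii) is not a topological fact about nets; it uses the semigroup structure in an essential way. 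The paper proves (i)$\Rightarrow$(iv) by applying the Jacobs--de Leeuw--Glicksberg theory to the compact semitopological semigroup $\calT_\infty$: the minimal idempotent $P_\infty$ exists, $T_s(\Id-P_\infty)x\to 0$ by Theorem \ref{rep.t.spec}.c), and $T_sP_\infty x$ stays in the compact set $\calT_\infty P_\infty x$ by Theorem \ref{rep.t.spec}.a); this decomposition is exactly what replaces your invalid compactness argument.

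A second, related problem is your treatment of (vi). You correctly flag it as an obstacle, but the interleaving/diagonal argument you sketch for (vi)$\Rightarrow$(iv) cannot work as described: a sequence $(T_{s_k}x)_k$ taken from an orbit tail may have index set $\{s_k\}$ that is not cofinal in $S$, and such elements are simply not controlled by hypothesis (vi). The paper avoids this by proving only (vi)$\Rightarrow$(i): Theorem \ref{uni.t.cluster} gives that each cluster set $C_x$ is non-empty and compact, whence $\calT_\infty x\subseteq C_x$ yields sot-compactness of $\calT_\infty$ via Lemma \ref{intro.l.sotcp}, and non-emptiness comes from a cluster point of $(T_{s_n})_n$ along a cofinal sequence; the passage back to (iv) then again goes through the JdLG step (i)$\Rightarrow$(iv). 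The remaining pieces of your plan (the equivalences (ii)$\Leftrightarrow$(iii) and the orbitwise translations via universal nets and Lemma \ref{intro.l.sotcp}) are fine, but without the JdLG input the cycle does not close.
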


\begin{proof} 
(i)$\dann$(iv): Suppose  that (i) holds and let $x \in E$. The net
$(T_s (\Id - P_\infty)x)_{s \in S}$ converges to $0$ according to Theorem~\ref{rep.t.spec}.c).
On the other hand, the net $(T_s P_\infty x)_{s \in S}$ is contained 
in the compact set $\calT_\infty P_\infty x$ due to
Theorem~\ref{rep.t.spec} a), so each of its subnets has a convergent
subnet. This shows (iv).

\prfnoi
(iv)$\dann$(v): This follows since a universal net with
a convergent subnet must converge.

\prfnoi
(v)$\dann$(iii):  Let
$(T_{s_\alpha})_{\alpha \in I}$ be a universal subnet
of $(T_s)_{s \in S}$. Then for each $x \in E$, 
the net $(T_{s_\alpha}x)_{\alpha \in I}$ is universal and hence, by
(v), convergent. Thus,
$(T_{s_\alpha})_{\alpha \in I}$ is strongly convergent.

\prfnoi
(iii)$\dann$(ii)$\dann$(i) and (iv)$
\dann$(vi) all  follow from 
Theorem \ref{uni.t.cluster}.

\prfnoi
(vi)$\dann$(i): Suppose that $S$ admits a cofinal sequence. 
Then by Theorem \ref{uni.t.cluster} for each $x\in E$ the 
set $C_x := \bigcap_{t\in S} \cl\{T_s x\suchthat s\ge t\}$ is
non-empty and compact. Since $\calT_\infty x \subseteq C_x$,
it follows that $\calT_\infty$ is strongly compact. 
In order
to  see that $\calT_\infty$ is not empty, fix a
cofinal sequence $(s_n)_n$.  By (vi) and since $E$ is metrizable,
it follows that for each $x\in E$ the set $\{ T_{s_n}x \suchthat
n \in \N\}$ is relatively compact. Hence, $\{ T_{s_n} \suchthat
n \in \N\}$ is relatively strongly compact. It follows that
the sequence $(T_{s_n})_n$ has a cluster point, which is a member
of $\calT_\infty$ since $(s_n)_n$ is cofinal.
\end{proof}

\begin{rems}\label{rep.r.sac}%
\begin{aufziii}
\item 
Assertion~(vi) in Proposition~\ref{rep.p.sac} is called {\emdf strong
  asymptotic compactness} in \cite[p.\,2636]{Emelyanov2001}.

\item Proposition~\ref{rep.p.sac} has an interesting
consequence 
in the ``classical'' cases where $S = \N_0$ or $S= \R_+$ and
$T$ is strongly continuous (cf. the Introduction). Namely, in these cases
one can actually dispense with
the semigroup at infinity, because
$\calT_\infty$ is strongly compact and non-empty {\em if and only if}
$\calT$ is strongly compact.  
\end{aufziii}
\end{rems}

In the next section  we shall present another situation in which  
$\calT_\infty$ is non-empty and strongly compact.

\section{The Abstract Main Result}\label{s.abs}

Suppose that 
$E$ and $F$ are Banach spaces such that  $F$ is densely embedded
in $E$:
\[ F\, \stackrel{d}{\hookrightarrow}\, E.
\]
Reference to this embedding is usually suppressed and $F$
is simply regarded as a subspace of $E$. We take the
freedom to consider an operator on $E$ also as an operator from $F$ to
$E$. (This amounts to view $\BL(E) \subseteq \BL(F;E)$ via the
restriction mapping.)

A semigroup $(T_s)_{s\in S}$ on $E$ is called {\emdf
  $F$-to-$E$ quasi-compact}, or {\emdf quasi-compact relatively to $F$},
if for each $\veps > 0$ there is $s\in S$ and 
a compact operator $K: F \to E$ such that
\[
\norm{T_s - K}_{\BL(F;E)} < \veps.
\]
Note that we do not require that $K$ can be
extended to a bounded operator on $E$. In effect,
the condition of being $F$-to-$E$ quasi-compact can be expressed as 
\[ \dist( \{T_s\suchthat s\in S\}, 
\CO(F;E)) = 0,
\]
where ``$\dist$'' refers to the distance induced
by the norm on $\BL(F;E)$.

\begin{thm}\label{abs.t.main}
Let $E$ and $F$ be Banach spaces, 
with $F$ being densely embedded into $E$, and let $(T_s)_{s\in S}$ be
a bounded operator semigroup on $E$ which restricts
to a bounded operator semigroup on $F$ and
is $F$-to-$E$ quasi-compact. Then the following assertions hold:
\begin{aufzi}
\item $\calT_\infty$ is a strongly compact and non-empty.
\item Each element of $\calT_\infty$ is compact as an operator from
  $F$ to $E$.   
\item  $\calT$ acts on $E_\infty$ as a sot-compact group of invertible
operators.
\item For $x\in E$ the following assertions are equivalent:
\begin{aufzii}
\item  $\lim_{s\in S} T_s x= 0$;
\item  $x \in \ker P_\infty$;
\item  $0 \in \cl_\sigma\{ T_sx \suchthat s\in S\}$.
\end{aufzii}
\end{aufzi}
\end{thm}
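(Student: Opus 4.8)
The plan is to derive a) and b) from the quasi-compactness hypothesis together with the results of Section~\ref{s.rep}; parts c) and d) are then immediate, being literally part~b) and (a portion of) part~c) of Theorem~\ref{rep.t.Pinfty}, which becomes applicable once a) is proved. Write $M_T:=\sup_s\norm{T_s}_{\BL(E)}$ and $M_F:=\sup_s\norm{T_s}_{\BL(F)}$, both finite by hypothesis. For a) I would verify assertion~(iii) of Proposition~\ref{rep.p.sac}, namely that every universal subnet $(T_{s_\alpha})_\alpha$ of $(T_s)_{s\in S}$ converges strongly. For each $x\in E$ the net $(T_{s_\alpha}x)_\alpha$ is again universal, so it suffices to show it is Cauchy; since the $T_s$ are uniformly bounded and $F$ is dense in $E$, a routine $3\veps$-argument reduces this to the case $x\in F$. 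Fix $x\in F$ and $\delta>0$, and choose $s_0\in S$ and $K\in\CO(F;E)$ with $\norm{T_{s_0}-K}_{\BL(F;E)}<\delta$. For $t\ge s_0$, writing $t=s_0+r$, one has $T_tx=T_{s_0}(T_rx)$ with $T_rx\in F$ and $\norm{T_rx}_F\le M_F\norm{x}_F=:\rho$, so $T_tx$ lies within $\delta\rho$ of the relatively compact set $C:=K(\{y\in F\st\norm{y}_F\le\rho\})$. As $(T_{s_\alpha})$ is a subnet of $(T_s)_{s\in S}$, eventually $s_\alpha\ge s_0$, hence eventually $T_{s_\alpha}x$ lies in the $\delta\rho$-neighbourhood of $C$; this neighbourhood is covered by finitely many balls of radius $2\delta\rho$, and since a universal net is eventually inside a single member of any finite cover, $(T_{s_\alpha}x)_\alpha$ is eventually contained in a set of diameter $\le 4\delta\rho$. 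As $\delta>0$ is arbitrary, $(T_{s_\alpha}x)_\alpha$ is Cauchy, which proves (iii) and hence a).

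For b), let $R\in\calT_\infty$. First I would observe that $R=RP_\infty$: indeed $R$ is the strong limit of a subnet of $(T_s)_{s\in S}$, and on $\ker P_\infty=\ran(\Id-P_\infty)$ that net tends to $0$ by the equivalence (i)$\Leftrightarrow$(iii) of Theorem~\ref{rep.t.Pinfty}.c). Therefore it is enough to show that $P_\infty$ is compact as an operator $F\to E$, for then $R\res{F}=R\circ(P_\infty\res{F})$ is compact for every $R\in\calT_\infty$. To prove $P_\infty\res{F}\in\CO(F;E)$, fix $\delta>0$ and $s_0,K$ as above. Since $P_\infty$ commutes with $T_{s_0}$, maps into $E_\infty$, and has norm $\le M_T$, for $y\in F$ we get $T_{s_0}y\in F$ and $\norm{P_\infty T_{s_0}y-P_\infty Ky}_E\le M_T\delta\norm{y}_F$, so $(P_\infty T_{s_0})\res{F}$ is within $M_T\delta$ of the compact operator $P_\infty K\colon F\to E$. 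On the other hand, $G:=T_{s_0}\res{E_\infty}$ lies in $\calG$, which by Theorem~\ref{rep.t.Pinfty}.b) is a bounded group of invertible operators, so $G^{-1}$ exists with $\norm{G^{-1}}_{\BL(E_\infty)}\le M_T$; writing $\iota\colon E_\infty\hookrightarrow E$ for the inclusion, a direct computation gives
\[ P_\infty=\bigl(\iota\,G^{-1}P_\infty\bigr)\circ\bigl(P_\infty T_{s_0}\bigr)\quad\text{as operators on }E. \]
Composing the previous estimate on the left with the bounded operator $\iota\,G^{-1}P_\infty$ (of norm $\le M_T^2$) yields $\dist(P_\infty\res{F},\CO(F;E))\le M_T^3\delta$, and letting $\delta\to0$ gives $P_\infty\res{F}\in\CO(F;E)$ because $\CO(F;E)$ is norm-closed in $\BL(F;E)$. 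This completes b), and then c) and d) follow from Theorem~\ref{rep.t.Pinfty} as noted.

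The crux is a), and in particular the reason the hypothesis ``$T$ restricts to a bounded semigroup on $F$'' is indispensable: in the estimate above one must factor $T_t=T_{s_0}T_r$ rather than $T_t=T_rT_{s_0}$, so that the nearly-compact operator $T_{s_0}\approx K$ is evaluated on the uniformly $F$-bounded vectors $T_rx$ and not on the single vector $T_{s_0}x$, whose orbit under $T$ need not be relatively compact. A secondary technical nuisance is that $S$ is not assumed cancellative, so one should avoid any argument that treats the ``remainder'' $r$ in $t=s_0+r$ as well defined; the factorization $P_\infty=(\iota\,G^{-1}P_\infty)\circ(P_\infty T_{s_0})$ used in b) is chosen precisely to sidestep this.
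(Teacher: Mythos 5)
Your argument is correct. Part a) is essentially the paper's own proof: you take a universal subnet, use the factorization $T_{s_0+r}=T_{s_0}T_r$ so that the almost-compact operator acts on the uniformly $F$-bounded vectors $T_rx$ (this is exactly where the hypothesis that $T$ restricts to a bounded semigroup on $F$ enters, as you correctly point out), and then invoke the Cauchy criterion for universal nets; the paper packages the last step as Lemma~\ref{uni.l.cauchy} and the passage from pointwise convergence to non-emptiness and compactness of $\calT_\infty$ slightly differently (via Lemma~\ref{intro.l.sotcp} rather than Proposition~\ref{rep.p.sac}), but the substance is identical. Part b) is where you genuinely diverge. The paper gets b) for free: the same inclusion $\calT_{s_0}(\Ball_F)\subseteq \cls{K(\Ball_F)}+\veps\,\Ball_E$ used for a) shows that $\calT_\infty(\Ball_F)$ admits a finite $2\veps$-mesh for every $\veps>0$, hence is relatively compact, so every element of $\calT_\infty$ is compact from $F$ to $E$ with no further work. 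You instead prove the single statement $P_\infty\res{F}\in\CO(F;E)$ via the factorization $P_\infty=(\iota\,G^{-1}P_\infty)\circ(P_\infty T_{s_0})$ and then write $R=RP_\infty$ for $R\in\calT_\infty$; all the individual steps check out (the commutation $P_\infty T_{s_0}=T_{s_0}P_\infty$, the invertibility of $T_{s_0}\res{E_\infty}$ in the compact group $\calG$ from Theorem~\ref{rep.t.Pinfty}.b), and the norm bounds). Your route is longer and leans on the JdLG group structure, which the paper's b) does not need at all, but it buys a sharper structural insight: the entire $F$-to-$E$ compactness of $\calT_\infty$ is concentrated in the single projection $P_\infty$. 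Parts c) and d) are, as you say, immediate from Theorem~\ref{rep.t.Pinfty} once a) is in place, which is also how the paper concludes.
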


\begin{proof}
a) and b)\   
By passing to an equivalent norm on $F$ we may suppose that
each $T_s$, $s\in S$,  is a contraction on $F$. Let
$\Ball_F$ and $\Ball_E$ denote the closed unit balls of $E$ and $F$,
respectively. 

Let $\veps > 0$ and choose $s\in S$ and  $K \in \CO(F;E)$ such that
$\norm{T_s- K}_{\BL(F;E)}\le \veps$. Then
\[ T_{t{+}s}(\Ball_F) = T_s T_t(\Ball_F) \subseteq
T_s(\Ball_F) \subseteq K(\Ball_F) + \veps \Ball_E
\]
for each $t \in S$, and therefore
\begin{equation}\label{AM.e.main} 
\calT_{s}(\Ball_F) \subseteq \cls{K(\Ball_F)} +
\veps \Ball_E.
\end{equation}
Now, let $(T_{s_\alpha})_{\alpha}$ be any universal subnet of
$(T_s)_{s\in S}$ (Lemma \ref{uni.l.kelley}) and let $x\in \Ball_F$. Then
the net $(T_{s_\alpha}x)_\alpha$ is a universal net in $E$. Moreover,
\eqref{AM.e.main} 
shows that for each $\veps> 0$ this net has a tail
contained in the $\veps$-neighborhood of some compact set. Hence,
by Lemma \ref{uni.l.cauchy}, 
it is a Cauchy net and thus convergent in $E$. 
Since $F$ is dense in $E$  and $T$ is bounded, 
$(T_{s_\alpha}x)_\alpha$ converges for {\em every} $x\in E$. In other 
words, $(T_{s_\alpha})_\alpha$ is strongly convergent. As its
limit must be a member of $\calT_\infty$, it follows that 
$\calT_\infty \neq \leer$.

It also follows from \eqref{AM.e.main} that 
$\calT_\infty (\Ball_F) \subseteq \cls{K(\Ball_F)} + \veps \Ball_E$.
As $\cls{K(\Ball_F)}$ is compact, it admits a finite $\veps$-mesh.
Hence,  $\calT_\infty (\Ball_F)$ admits a finite $2\veps$-mesh. Since
this works for each $\veps > 0$, $\calT_\infty (\Ball_F)$ is relatively
compact in $E$.

In particular, it follows that $\calT_\infty \subseteq \CO(F;E)$ 
and that for each $x\in F$
the orbit $\calT_\infty x$ is relatively compact in $E$.
Since $T$ is bounded on $E$ and $F$ is dense in $E$, 
$\calT_\infty$ is relatively strongly compact  (Lemma \ref{intro.l.sotcp}).
But $\calT_\infty$ is strongly closed, so it is strongly compact as claimed.

\prfnoi
c)\ and d) follow from a) by Theorem  \ref{rep.t.spec}. 
\end{proof}

\begin{rem}
Theorem \ref{abs.t.main} seems to be new even for $C_0$-semigroups. In that case, by Remark
\ref{rep.r.sac}.b), it follows a posteriori that the $C_0$-semigroup is
relatively compact. 
\end{rem}

In the next sections we shall see that our set-up from above
has a quite natural instantiation in the
context of semigroups of 
positive operators on Banach lattices with a quasi-interior point.

\section{Positive Semigroups and  AM-Compactness}\label{s.psg}\label{s.AM}

From now on, we consider {\em positive} semigroups $T=(T_s)_{s\in S}$
on Banach lattices $E$. The role of $F$ in our abstract setting
from above will be taken by the {\em  principal ideal}
\[ E_y := \{ x\in E \suchthat \text{there is $c \ge 0$ such that 
$\abs{x}\le cy$}\}
\]
for some  $y \in E_+$, endowed with the natural
AM-norm
\[ \norm{x}_y := \inf\{
 c \ge 0 \suchthat \abs{x}\le cy\}.
\]
Since we need that $F= E_y$ is dense in $E$, we have to require
that $y$ is {\em a quasi-interior point} in $E$. 

As we further need  $E_y$-to-$E$ quasi-compactness, 
it is natural to ask which 
operators on $E$ restrict to compact operators  from $E_y$ to
$E$. It turns out that these are precisely the {\em AM-compact}
operators,
i.e., those that map order intervals of $E$ to
relatively compact subsets of $E$, see Lemma \ref{int.t.AM}. 

There are a couple of useful theorems that help to identify 
AM-compact operators. For example, operators
between $\Ell{p}$-spaces induced by positive integral kernel functions
and positive operators that ``factor through $\Ell{\infty}$-spaces''
are AM-compact. (Proofs of these well-known facts are presented in
Appendix \ref{app.int}, see Theorems \ref{int.t.AM} and \ref{int.t.fact-Linfty}.)

\subsection*{The Range of a Positive Projection}

When we apply  Theorem \ref{abs.t.main} to  a semigroup of positive operators,
the resulting projection $P_\infty$ will be positive, too. 
The following
is a useful information about its range.

\begin{lemdef}\label{psg.l.ranP}
Let $E$ be a Banach lattice and let $P$ be a positive projection
on $E$. Define
\[  \norm{x}_P := \norm{P\abs{x}}\qquad (x\in \ran(P)).
\]
Then the following assertions hold:
\begin{aufzi}
\item $\norm{\cdot}_P$ is an equivalent norm on $\ran(P)$.
\item The space $\ran(P)$ is a Banach lattice with respect to the
order induced by $E$ and the norm $\norm{\cdot}_P$. Its modulus
is given by
\[ \abs{x}_P := P\abs{x} \qquad (x\in \ran(P)).
\]
\end{aufzi}
{\rm The Banach lattice $\ran(P)$ endowed with the
norm $\norm{\cdot}_P$ as in a) and b) is
denoted by $[\ran(P)]$.}
\begin{aufzi}\setcounter{aufzi}{2}
\item If $y\in E_+$ then $P(E_y) \subseteq [\ran(P)]_{Py}$.
In particular, if $y\in E_+$
is a quasi-interior point of $E$ then
$Py$ is a quasi-interior point of $[\ran(P)]$. 
\end{aufzi}
\end{lemdef}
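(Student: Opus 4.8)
The plan is to prove the three assertions in order, since each builds on the previous one.

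\textbf{Proof sketch for a) and b).} First I would check that $\norm{\cdot}_P$ is well-defined and a seminorm on $\ran(P)$: homogeneity and the triangle inequality follow from $\abs{x+z} \le \abs{x}+\abs{z}$ together with positivity and linearity of $P$. For positive-definiteness and equivalence, I would use that $P$ is a bounded projection: for $x \in \ran(P)$ we have $x = Px$, hence $\abs{x} = \abs{Px} \le P\abs{x}$ (this is the standard inequality $\abs{Px}\le P\abs{x}$ for positive $P$ applied together with $x=Px$), so $\norm{x} \le \norm{P\abs{x}} = \norm{x}_P \le \norm{P}\,\norm{\abs{x}} = \norm{P}\,\norm{x}$. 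This simultaneously gives positive-definiteness and the norm equivalence $\norm{x} \le \norm{x}_P \le \norm{P}\norm{x}$, and since $\ran(P)$ is closed in $E$ (being the kernel of the bounded operator $\Id-P$) it is complete in $\norm{\cdot}$, hence in $\norm{\cdot}_P$. For b), the candidate modulus is $\abs{x}_P := P\abs{x}$, which lies in $\ran(P)$ and dominates $\pm x$ in the order of $E$ (again via $x = Px \le P\abs{x}$ and $-x \le P\abs{x}$); I would verify it is the \emph{least} upper bound of $\{x,-x\}$ \emph{within} $\ran(P)$: if $w \in \ran(P)$ with $w \ge x$ and $w \ge -x$, then $w = Pw \ge P\abs{x}$ because $w \ge \abs{x}$ in $E$ and $P$ is positive. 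Thus $\ran(P)$ is a sublattice-like object with this modified modulus; the Riesz space axioms (distributivity etc.) transfer because the order is the restriction of the order on $E$ and joins/meets are computed by applying $P$ to the ambient ones. Finally the lattice norm property $\abs{x} \le \abs{z}$ in $\ran(P)$ (i.e.\ $P\abs{x} \le P\abs{z}$) implies $\norm{x}_P = \norm{P\abs{x}} \le \norm{P\abs{z}} = \norm{z}_P$ by monotonicity of $\norm{\cdot}$ on $E_+$.

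\textbf{Proof sketch for c).} Let $y \in E_+$ and take $x \in P(E_y)$, say $x = Pu$ with $\abs{u} \le cy$. Then $\abs{x}_P = P\abs{Pu} \le P(P\abs{u}) \le P(P(cy)) = c\,Py$ if $P$ is idempotent — wait, more carefully: $\abs{Pu} \le P\abs{u} \le P(cy) = c\,Py$, and applying $P$ once more (using $P^2 = P$ and positivity) gives $\abs{x}_P = P\abs{Pu} \le P(c\,Py) = c\,Py$. Hence $x \in [\ran(P)]_{Py}$ with $\norm{x}_{Py} \le c$, proving $P(E_y) \subseteq [\ran(P)]_{Py}$. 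For the "in particular" statement, suppose $y$ is quasi-interior in $E$, i.e.\ $E_y$ is dense in $E$. Then $P(E_y)$ is dense in $P(E) = \ran(P) = [\ran(P)]$ because $P$ is continuous and surjective onto its range. Combining with $P(E_y) \subseteq [\ran(P)]_{Py}$, the principal ideal $[\ran(P)]_{Py}$ is dense in $[\ran(P)]$, which is exactly the statement that $Py$ is a quasi-interior point of the Banach lattice $[\ran(P)]$.

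\textbf{Expected main obstacle.} The routine parts are the inequalities, which all reduce to the two facts "$\abs{Px}\le P\abs{x}$" and "$x \in \ran(P) \Rightarrow \abs{x}\le P\abs{x}$"; the only genuinely delicate point is b), namely checking that $\ran(P)$ with the \emph{inherited} order but the \emph{modified} modulus is a bona fide Riesz space (not merely an ordered Banach space with a largest-lower-bound operation). The key observation making this work is that for $x,z\in\ran(P)$ the lattice operations in $\ran(P)$ are $x\vee_P z = P(x\vee z)$ and $x\wedge_P z = P(x\wedge z)$ (least upper / greatest lower bounds within $\ran(P)$, by the same $w=Pw\ge\cdots$ argument as above), so all identities required of a vector lattice are obtained by applying the positive linear map $P$ to the corresponding identities in $E$ and using $P^2=P$; I would state this explicitly and then note that the norm computations in a) and the lattice-norm property follow at once. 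I do not expect any serious difficulty beyond organizing this carefully.
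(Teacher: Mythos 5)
Your argument is correct and complete; the paper itself gives no proof here, merely citing \cite[Proposition~III.11.5]{Schaefer1974}, and your reasoning is precisely the standard argument found there (the two key inequalities $\abs{Px}\le P\abs{x}$ and, for $x\in\ran(P)$, $\abs{x}\le P\abs{x}$, together with the observation that suprema in $\ran(P)$ are obtained by applying $P$ to suprema in $E$). One minor remark: once you know every pair in the ordered vector space $\ran(P)$ has a least upper bound, it is automatically a Riesz space, so the "distributivity transfers" worry in your final paragraph is already settled by your formula $x\vee_P z=P(x\vee z)$.
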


\begin{proof}
This is essentially  \cite[Proposition~III.11.5]{Schaefer1974}.
\end{proof}

In order to obtain further insight into the
relation of the closed ideals in $[\ran(P)]$ and in $E$,
we define  for any Banach lattice $E$ the mapping
\[ \Phi(J) := \cl\bigl\{ x\in E \suchthat 
\abs{x}\le y\,\,\text{for some $y\in J_+$}\bigr\}\qquad (J \subseteq E).
\]
If $J_+ = J \cap E_+$ is a cone, then $\Phi(J) = \Phi(J_+)$ is
the smallest closed ideal in $E$ containing $J_+$.

\begin{thm}\label{psg.t.ideals}
Let $E$ be a Banach lattice and $P$ a positive projection on $E$, and
let $\Phi$ be defined as above.  Then the following assertions hold:
\begin{aufzi}
\item If $I$ is a closed $P$-invariant ideal in $E$ then $P(I) = I \cap
  \ran(P)$ is a closed ideal in $[\ran(P)]$. 

\item If $J$ is a closed ideal in $[\ran(P)]$ then $\Phi(J)$ is
  $P$-invariant and the smallest closed  ideal in $E$ containing $J$. 
Moreover, 
\[  J = P(\Phi(J)) = \Phi(J) \cap \ran(P).
\]
\end{aufzi}
\end{thm}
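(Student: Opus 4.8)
The plan is to prove the two assertions in turn, relying throughout on Lemma and Definition \ref{psg.l.ranP}, in particular on the description of the modulus in $[\ran(P)]$ as $\abs{x}_P = P\abs{x}$ and on the fact that $\norm{\cdot}_P$ is equivalent to $\norm{\cdot}$ on $\ran(P)$. For a), let $I$ be a closed $P$-invariant ideal in $E$. First I would check $P(I) = I \cap \ran(P)$: the inclusion $P(I) \subseteq I \cap \ran(P)$ is immediate from $P$-invariance, and conversely if $x \in I \cap \ran(P)$ then $x = Px \in P(I)$. Next, that $P(I)$ is an ideal in $[\ran(P)]$: if $x \in \ran(P)$, $z \in P(I)$ and $\abs{x}_P \le \abs{z}_P$, i.e. $P\abs{x} \le P\abs{z}$, then since $z \in I$ and $I$ is an ideal with $P\abs{z} \in I_+$ (using $P$-invariance again), we get $P\abs{x} \in I$; but $\abs{x} = \abs{Px} \le P\abs{x}$ because $x \in \ran(P)$ (apply $P$ to $\abs{x} \le \abs{x}$... more carefully: $x = Px$ so $\abs{x} = \abs{Px} \le P\abs{x}$ by positivity of $P$), hence $x \in I$ by the ideal property of $I$, so $x \in I \cap \ran(P) = P(I)$. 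Linearity and the subspace property of $P(I)$ are clear. Finally closedness of $P(I)$ in $[\ran(P)]$: $I \cap \ran(P)$ is closed in $\ran(P)$ for the original norm (intersection of two closed subspaces), and by the equivalence of norms it is closed for $\norm{\cdot}_P$ as well.

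For b), let $J$ be a closed ideal in $[\ran(P)]$. The statement that $\Phi(J)$ is the smallest closed ideal in $E$ containing $J$ follows from the remark preceding the theorem once I observe that $J_+ = J \cap E_+$ is a cone (it is the positive cone of the Banach lattice $J$, which is a sublattice of $[\ran(P)]$, hence closed under addition and positive scaling). So the real content is the chain of equalities $J = P(\Phi(J)) = \Phi(J) \cap \ran(P)$ together with $P$-invariance of $\Phi(J)$. I would proceed as follows. $P$-invariance: for $x \in E$ with $\abs{x} \le y$, $y \in J_+$, positivity of $P$ gives $\abs{Px} \le P\abs{x} \le Py$, and $Py \in J_+$ since $y \in J \cap \ran(P)$ already (elements of $J_+ \subseteq \ran(P)$ are fixed by $P$) — wait, more simply $Py = y \in J_+$; hence $Px$ lies in the set whose closure is $\Phi(J)$, and since $P$ is continuous and $\Phi(J)$ is the closure of a $P$-stable set, $P(\Phi(J)) \subseteq \Phi(J)$. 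Then $P(\Phi(J)) \subseteq \Phi(J) \cap \ran(P)$ trivially, and $\Phi(J) \cap \ran(P) \subseteq P(\Phi(J))$ since $x = Px$ for $x \in \ran(P)$; so those two sets agree. It remains to show $P(\Phi(J)) = J$. The inclusion $J \subseteq P(\Phi(J))$: if $z \in J$ then $\abs{z} \le \abs{z}_P = P\abs{z}$ wait — I want to exhibit $z \in \Phi(J)$. Take $y = \abs{z}_P = P\abs{z} \in J_+$; then $\abs{z} \le P\abs{z}$ (as $z \in \ran(P)$, shown above), so $z$ belongs to the pre-closure set defining $\Phi(J)$, hence $z \in \Phi(J)$, hence $z = Pz \in P(\Phi(J))$.

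The one inclusion that needs genuine care — and which I expect to be the main obstacle — is $P(\Phi(J)) \subseteq J$, because $\Phi(J)$ is defined via a closure in $E$ and one must pass that closure through $P$ and land back in the (norm-)closed set $J$. The strategy: let $D = \{x \in E : \abs{x} \le y \text{ for some } y \in J_+\}$, so $\Phi(J) = \cl D$ (closure in $E$). For $x \in D$ with $\abs{x} \le y$, $y \in J_+$, we have $\abs{Px} \le P\abs{x} \le Py = y$, so $Px \in \ran(P)$ with $\abs{Px}_P = P\abs{Px} \le P\abs{x} \le y = \abs{y}_P$; since $y \in J$ and $J$ is an ideal in $[\ran(P)]$, $Px \in J$. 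Thus $P(D) \subseteq J$. Now $P$ is $\norm{\cdot}$-continuous, and on $\ran(P)$ the norms $\norm{\cdot}$ and $\norm{\cdot}_P$ are equivalent, so $P : E \to [\ran(P)]$ is continuous; since $J$ is $\norm{\cdot}_P$-closed, $P(\cl D) \subseteq \cl(P(D)) \subseteq J$, i.e. $P(\Phi(J)) \subseteq J$. Combining with the reverse inclusion from the previous paragraph gives $P(\Phi(J)) = J$, completing the proof. (One should double-check the elementary fact used repeatedly that $x \in \ran(P)$ implies $\abs{x} \le P\abs{x}$: indeed $x = Px$ and $-\abs{x} \le x \le \abs{x}$ yields, upon applying the positive operator $P$, $-P\abs{x} \le x \le P\abs{x}$, hence $\abs{x} \le P\abs{x}$.)
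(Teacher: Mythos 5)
Your proposal is correct and follows essentially the same route as the paper: part a) uses the identity $\abs{x}=\abs{Px}\le P\abs{x}=\abs{x}_P$ together with $P$-invariance of $I$, and part b) shows that $P$ maps the pre-closure set defining $\Phi(J)$ into $J$ and then passes to the closure by continuity. The only difference is that you spell out explicitly (via $P(D)\subseteq J$ and $P(\cl D)\subseteq\cl(P(D))\subseteq J$) the continuity step that the paper leaves implicit in its ``This also shows that $P(\Phi(J))\subseteq J$''.
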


\begin{proof}
a)\  Let $I \subseteq E$ be a closed $P$-invariant
ideal and let $J := I \cap \ran(P)$. Then $J$ is a closed subspace
of $\ran(P)$. And if $x\in \ran(P)$ and $y \in J$ with $\abs{x}_P \le
\abs{y}_P$, it follows that
\[ \abs{x} = \abs{Px}\le P\abs{x} = \abs{x}_P \le \abs{y}_P = P\abs{y}
\in I
\]
by $P$-invariance. Hence, $x\in J$ and therefore $J$ is an ideal in
$[\ran(P)]$. Moreover, again by $P$-invariance,
\[ J = PJ = P(I \cap \ran(P)) \subseteq P(I)  \subseteq
I \cap \ran(P),
\]
and hence $I \cap \ran(P) = J = P(I)$.

\smallskip
\noindent
b)\ Let $J$ be any closed ideal of $[\ran(P)]$. Then $\Phi(J)$
is the smallest  closed ideal in $E$ containing $J$. (In fact, if 
$x\in J$ then $\abs{x} = \abs{Px}\le P \abs{x} = \abs{x}_P \in J_+$,
and hence $J\subseteq \Phi(J)$.) It is also $P$-invariant, for
if $\abs{x} \le y \in J_+$ then  $\abs{Px} \le P\abs{x} \le Py = y$. 
This also shows that $P(\Phi(J)) \subseteq J$, and since $J \subseteq
\Phi(J)$, it follows that  $P(\Phi(J)) = J$. 
\end{proof}

\subsection*{The Main Result for Positive Semigroups}

We are now prepared for our second main theorem.

\begin{thm}\label{psg.t.main}
Let $T= (T_s)_{s\in S}$ be a bounded and positive 
operator semigroup on a Banach lattice $E$
with a quasi-interior point $y \in E_+$. Suppose, in addition,
that $T$ is $E_y$-to-$E$ quasi-compact and
restricts to a bounded semigroup on $E_y$. Then the following
assertions hold:
\begin{aufzi}
\item $\calT_\infty$ is 
  strongly compact and non-empty and consists of AM-compact operators.

\item $[\ran(P_\infty)]$ is an atomic Banach lattice with order continuous
  norm and quasi-interior point $P_\infty y$.

\item  The semigroup $\calT = \cl_\sot\{T_s\suchthat s\in
  S\}$ acts on $[\ran(P_\infty)]$ as
 a compact topological group of positive, invertible operators.

\item If $(T_s)_{s\in S}$ acts irreducibly on $E$, then $\calT$ acts
irreducibly on $[\ran(P_\infty)]$.
\end{aufzi}
\end{thm}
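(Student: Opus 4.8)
\emph{Proof plan.}
For (a) I would simply feed the hypotheses into the abstract machinery. The principal ideal $F := E_y$ is densely embedded in $E$ because $y$ is a quasi-interior point, and by assumption $(T_s)_{s\in S}$ is bounded on $E$, restricts to a bounded semigroup on $E_y$, and is $E_y$-to-$E$ quasi-compact; hence Theorem \ref{abs.t.main} applies. It yields at once that $\calT_\infty$ is non-empty and strongly compact, that every element of $\calT_\infty$ is compact as an operator from $E_y$ to $E$, and that $\calT$ acts on $E_\infty = \ran(P_\infty)$ as an sot-compact group of invertible operators. By Lemma \ref{int.t.AM} an operator on $E$ restricts to a compact operator $E_y\to E$ precisely when it is AM-compact, so every element of $\calT_\infty$ is AM-compact; this is (a). I would also record here that the minimal idempotent $P_\infty\in\calT_\infty$ is positive --- it is an sot-limit of positive operators $T_{s_\alpha}$, and positivity is preserved under sot-limits --- so Lemma and Definition \ref{psg.l.ranP} applies with $P = P_\infty$; part c) of that statement already gives that $P_\infty y$ is a quasi-interior point of $[\ran(P_\infty)]$.

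The substance is in (b). Here the plan is to prove that the identity operator on $[\ran(P_\infty)]$ is AM-compact, i.e.\ that every order interval of $[\ran(P_\infty)]$ is norm-compact. Fix $z\in\ran(P_\infty)_+$. Since the order of $[\ran(P_\infty)]$ is induced from $E$, the order interval $[0,z]$ taken in $[\ran(P_\infty)]$ is contained in the order interval $[0,z]_E$ of $E$; and as each $x$ in it lies in $\ran(P_\infty)$, we have $x = P_\infty x$, whence $[0,z]\subseteq P_\infty([0,z]_E)$. The set $P_\infty([0,z]_E)$ is relatively compact in $E$ because $P_\infty$ is AM-compact by (a). Since this interval is closed in $\ran(P_\infty)$ and the norm $\norm{\cdot}_{P_\infty}$ is equivalent to the $E$-norm on $\ran(P_\infty)$ (Lemma and Definition \ref{psg.l.ranP}.a)), it is norm-compact in $[\ran(P_\infty)]$. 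So all order intervals of $[\ran(P_\infty)]$ are compact, and by the standard characterization of atomic Banach lattices with order continuous norm (equivalently: of Banach lattices whose identity operator is AM-compact) $[\ran(P_\infty)]$ is such a lattice; together with the quasi-interior point $P_\infty y$ from the first paragraph this is (b). I expect the one genuinely delicate point of the theorem to be hidden here: keeping the two equivalent norms on $\ran(P_\infty)$ apart, and pinpointing the Banach-lattice fact that compactness of all order intervals forces atomicity --- the order-continuity part alone being the familiar criterion that order intervals are weakly compact.

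For (c) I would read the group part off Theorem \ref{rep.t.spec}.b): the set $\calG := \cl_\sot\{T_s\res{E_\infty}\suchthat s\in S\} = \calT\res{E_\infty}$ is a strongly compact group of invertible operators on $E_\infty$, and since $\norm{\cdot}_{P_\infty}$ is an equivalent norm on $\ran(P_\infty)$ the same is true on $[\ran(P_\infty)]$. Each $T_s\res{E_\infty}$ is positive as an operator on $[\ran(P_\infty)]$: indeed $T_s$ is positive on $E$ and leaves $\ran(P_\infty)$ invariant, while the positive cone of $[\ran(P_\infty)]$ equals $\ran(P_\infty)\cap E_+$; positivity then passes to the sot-limits forming $\calG$. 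Finally, operator multiplication is jointly sot-continuous on norm-bounded sets, so multiplication, and hence inversion, is continuous on the compact group $\calG$, making it a compact topological group.

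For (d), let $J$ be a closed $\calT$-invariant ideal of $[\ran(P_\infty)]$; in particular $J$ is invariant under each $T_s\res{E_\infty}$. I would show that $\Phi(J)$, defined as before Theorem \ref{psg.t.ideals}, is $T$-invariant in $E$: if $\abs{x}\le w$ with $w\in J_+\subseteq\ran(P_\infty)_+$, then $\abs{T_s x}\le T_s\abs{x}\le T_s w$, and $T_s w$ lies in $J\cap\ran(P_\infty)\cap E_+ = J_+$ because $\ran(P_\infty)$ is $T$-invariant and $J$ is $T_s\res{E_\infty}$-invariant; hence $T_s x\in\Phi(J)$, and closedness of $\Phi(J)$ together with continuity of $T_s$ yields $T_s(\Phi(J))\subseteq\Phi(J)$. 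Thus $\Phi(J)$ is a closed $T$-invariant ideal of $E$, so by irreducibility of $(T_s)_{s\in S}$ on $E$ it equals $\{0\}$ or $E$. Since $J\subseteq\Phi(J)$ and $J = \Phi(J)\cap\ran(P_\infty)$ by Theorem \ref{psg.t.ideals}.b), this forces $J = \{0\}$ or $J = \ran(P_\infty) = [\ran(P_\infty)]$; that is, $\calT$ acts irreducibly on $[\ran(P_\infty)]$. This last part is routine once Theorem \ref{psg.t.ideals} is in place.
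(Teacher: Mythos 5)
Your proposal is correct and follows essentially the same route as the paper: part (a) and (c) via Theorem \ref{abs.t.main} together with the characterization of AM-compactness as compactness of the restriction $E_y\to E$ (which is Lemma \ref{AM.l.AM-char}, not Theorem \ref{int.t.AM} --- the latter is the integral-operator example), part (b) by showing that order intervals of $[\ran(P_\infty)]$ sit inside images of $E$-order intervals under the AM-compact projection $P_\infty$ and then invoking Wnuk's characterization of atomic lattices with order continuous norm, and part (d) via $\Phi$ and Theorem \ref{psg.t.ideals}. The extra details you supply (positivity of $P_\infty$ as an sot-limit, the quasi-interior point $P_\infty y$ via Lemma \ref{psg.l.ranP}.c), joint sot-continuity of multiplication for the topological-group claim) are all correct and merely make explicit what the paper leaves implicit.
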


\begin{proof}
a)\ follows from Theorem \ref{abs.t.main} and Lemma \ref{AM.l.AM-char}.

\prfnoi
b)\ Each order interval $J$ of $E_\infty = [\ran(P_\infty)]$ is of the form
$J = J' \cap E_\infty$, where $J'$ is an order interval of $E$. 
Since $P_\infty$ is AM-compact but restricts to the identity on
$E_\infty$, it follows that $J = P_\infty(J) \subseteq P_\infty(J')
$ is relatively compact. By \cite[Theorem~6.1]{Wnuk1999}, this implies that
$E_\infty$ as a Banach lattice is atomic and has order continuous
norm. 

\prfnoi
c)\ This follows again from Theorem \ref{abs.t.main}.   

\prfnoi
d)\ Suppose that $J\neq \{0\}$ is a closed $\calT$-invariant ideal in
$[\ran(P_\infty)]$. Then the set 
\[ \{ x\in E \suchthat  \abs{x}\le y \,\, \text{for some $y \in J_+$}\}
\]
is $T$-invariant, and hence $\Phi(J)$ is a closed $T$-invariant ideal
in $E$ containing $J$. By irreducibility, $\Phi(J) = E$, and hence
$\ran(P_\infty) = P(E) = P(\Phi(J)) = J$ by Theorem \ref{psg.t.ideals}.
\end{proof}

\begin{rems} \label{psg.r.main}
\begin{aufziii}
\item The assumption that $T$ restricts to a bounded semigroup on $E_y$ is 
for instance satisfied if $y$ is a {\emdf sub-fixed point} of $T$,
i.e., if  $T_t y \leq y$ for all $t \in S$. 

\item  Certainly, if $T_s$ is AM-compact for some $s \in S$ then 
$T$ is $E_y$-to-$E$ quasi-compact. 
\end{aufziii}
\end{rems}

We conclude this section with
 the following result, essentially proved by Gerlach and Glück
in  \cite[Lemma~3.12]{GerlachConvPOS}. It shows that 
in certain situations 
it suffices to require merely that  $T_s$ {\em
  dominates} a non-trivial AM-compact operator for some $s\in S$.

\begin{lem} \label{classical.l.dom}
Let $T = (T_s)_{s \in S}$ be a bounded, positive and irreducible
semigroup  on a Banach lattice $E$ with
order continuous norm and a quasi-interior sub-fixed point $y$ of $T$.
Suppose that there are $s \in S$ and an AM-compact operator 
$K\neq 0$ on $E$ with $0 \le K \le T_{s}$. Then $T$ is  $E_y$-to-$E$
quasi-compact and restricts to a bounded semigroup on $E_y$. In
particular,
Theorem \ref{psg.t.main} is applicable.
\end{lem}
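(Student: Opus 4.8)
The plan is to bootstrap from the single dominated operator $K$ to the quasi-compactness of the whole semigroup, using irreducibility to spread the AM-compactness around, and order continuity to control the relevant closures. First I would observe that $T$ restricts to a bounded semigroup on $E_y$: since $y$ is a sub-fixed point, $T_t y \le y$, hence $T_t$ maps the order interval $[-y,y]$ into itself, and since $[-y,y]$ is (up to scaling) the unit ball of $E_y$ with its AM-norm, each $T_t$ is a contraction on $E_y$. This disposes of the second assertion and is the easy half.

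The substance is to show that $T$ is $E_y$-to-$E$ quasi-compact, i.e.\ that $\dist(\{T_t : t\in S\}, \CO(E_y;E)) = 0$. By Lemma \ref{AM.l.AM-char} (the characterization that operators restricting to compact operators $E_y\to E$ are exactly the AM-compact ones), it suffices to find, for each $\veps>0$, some $t\in S$ and an AM-compact operator $R$ on $E$ with $\norm{(T_t - R)\res{E_y}}_{\BL(E_y;E)} < \veps$ — and in fact it would be cleanest to find some $t$ for which $T_t$ itself dominates an AM-compact operator whose ``defect'' $T_t - R$ is small in the $E_y$-to-$E$ norm, or better, to produce a single $t$ with $T_t$ AM-compact or arbitrarily close to AM-compact. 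The key step: starting from $0 \le K \le T_s$ with $K \ne 0$ AM-compact, I would consider operators of the form $T_u K T_v$ for $u,v\in S$; these are again AM-compact (AM-compactness is a two-sided ideal property under composition with bounded operators, since bounded operators map relatively compact sets to relatively compact sets, and positivity ensures they map order intervals into order intervals up to scaling), and they are dominated by $T_{u} T_s T_v = T_{u+s+v}$. The natural candidate for a ``large'' AM-compact minorant is then something like $K_n := \sum$ (a suitable finite or appropriately normalized combination) of such sandwiched operators, or an ``averaged'' AM-compact operator; irreducibility is what prevents these minorants from being confined to a proper closed ideal. Concretely, I expect one forms the closed ideal generated by the range-related data of $K$ and shows, via irreducibility, that it is all of $E$, then uses this to upgrade the pointwise domination to a norm estimate on $E_y$.

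The main obstacle — and the place where I would lean hardest on \cite[Lemma~3.12]{GerlachConvPOS} rather than reprove from scratch — is the quantitative passage from ``$T_s$ dominates a nonzero AM-compact operator'' to ``$\dist(T_S, \CO(E_y;E)) = 0$''. The danger is that the AM-compact minorant, while nonzero, could be ``small'' relative to $T_s$ in the $E_y\to E$ norm, so dominating it gives no information about the whole of $T_s$. The resolution should run through irreducibility plus order continuity of the norm: one shows that the family of AM-compact operators dominated by the $T_t$'s, when suitably combined and then hit with the semigroup on both sides, produces minorants $R$ with $T_t - R$ having norm as a map $E_y \to E$ tending to $0$ — essentially because the ideal on which these minorants ``act nontrivially'' is dense, and on a quasi-interior principal ideal $E_y$ density of an ideal plus order continuity force the approximation to be uniform on the $E_y$-unit ball $[-y,y]$. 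Once $E_y$-to-$E$ quasi-compactness and boundedness on $E_y$ are established, Theorem \ref{psg.t.main} applies verbatim, giving the final clause. I would carry the steps out in the order: (1) boundedness on $E_y$ from the sub-fixed point property; (2) AM-compactness of sandwiched operators $T_u K T_v$; (3) irreducibility $\Rightarrow$ the generated ideal is $E$; (4) order continuity $\Rightarrow$ the approximation is uniform on $[-y,y]$, i.e.\ $E_y$-to-$E$ quasi-compactness; (5) invoke Theorem \ref{psg.t.main}.
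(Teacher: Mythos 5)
Your proposal splits into an easy half and a hard half, and you locate the difficulty correctly; but note first that the paper supplies no proof of this lemma at all --- it explicitly defers to \cite[Lemma~3.12]{GerlachConvPOS} --- so the real comparison is with that argument. Your step (1) is correct and complete: $T_t y \le y$ gives $T_t[-y,y]\subseteq[-y,y]$, so each $T_t$ is a contraction for the AM-norm of $E_y$ (this is exactly Remark \ref{psg.r.main}.1). Your step (2) is also correct: $T_uKT_v$ is AM-compact and $0\le T_uKT_v\le T_{u+s+v}$. The useful quantitative reformulation, which your plan never reaches, is that for a \emph{positive} operator $R$ one has $\norm{R}_{\BL(E_y;E)}=\norm{Ry}$ (since $[-y,y]$ is the unit ball of $E_y$ and $\abs{Rx}\le R\abs{x}\le Ry$), so the task is: for each $\varepsilon>0$ find $t$ and an AM-compact $0\le A\le T_t$ with $\norm{(T_t-A)y}<\varepsilon$.

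The gap is in your steps (3)--(4). The assertion that ``density of the generated ideal plus order continuity force the approximation to be uniform on $[-y,y]$'' is not a deduction: density of an ideal yields no norm estimate for $T_t-A$ on the order interval $[-y,y]$, and no combination of sandwiched operators $T_uKT_v$ by itself controls the size of the residual $T_t-A$. The mechanism in the cited lemma is genuinely different. Order continuity of the norm is used to make the AM-compact operators a \emph{band} in the regular operators on $E$, so that each $T_t$ splits as $T_t=A_t+R_t$ with $A_t$ its AM-compact component and $R_t\ge 0$ disjoint from all AM-compact operators. Since $A_tT_u+R_tA_u$ is a positive AM-compact operator below $T_{t+u}$, one gets the submultiplicativity $R_{t+u}\le R_tR_u$, hence $R_{t+u}y\le R_tR_uy\le R_tT_uy\le R_ty$: the net $(R_ty)_{t\in S}$ is decreasing, and order continuity upgrades this to norm convergence to its infimum. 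Irreducibility, together with $A_s\ge K\neq 0$, is what forces that infimum to be $0$, i.e. $\norm{T_t-A_t}_{\BL(E_y;E)}\to 0$, which is the desired quasi-compactness. Your sketch produces neither this decomposition nor any substitute for the monotonicity argument, so as written it does not close; if instead you quote \cite[Lemma~3.12]{GerlachConvPOS} wholesale for the hard half --- as the paper itself does --- the lemma follows, but then steps (2)--(4) of your outline are not a proof of it.
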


\section{Compact Groups of Positive Operators on Atomic Banach Lattices}
\label{s.grp}

In view of our general strategy, Theorem~\ref{psg.t.main} suggests 
to look for criteria implying that a positive group representation on
an atomic Banach lattice with order-continuous norm is trivial. To
this end, we first summarize some known results about atomic Banach
lattices.

\subsection*{Atomic Banach Lattices}

Recall that an {\emdf atom} in a Banach lattice is any element
$0 \neq a\in E$ such that its generated principal ideal, $E_a$, is
one-dimensional: $E_a = \K \cdot a$. We denote by 
\[
A = A_E := \{ a\in E_+ \suchthat \text{$a$ atom},\, \norm{a} =1\}
\]
the set of positive atoms of norm one.
For distinct $a,\:b\in A$ one has
\[  \abs{a- b} = \abs{a + b} = a + b \ge a
\]
and hence $\norm{a-b} \ge \norm{a} = 1$. This shows that $A$ is a discrete set
with respect to the norm topology. 

A Banach lattice $E$ is
called {\emdf atomic}, if $E$ is the smallest band in $E$ that
contains all atoms.  In other words, 
\[ A^d : = \{ x\in E \suchthat \abs{x} \wedge a = 0 \,\,\text{for all
  $a\in A$}\} = \{0\}.
\]
For each $a\in A$ the one-dimensional subspace $E_a = \K a$ is a
projection band, with corresponding band projection $P_A$ given by 
\[ P_a x := \sup [0,x]\cap \R a = \sup\{ t\in\R_+ \suchthat ta \le x\}
\cdot a \qquad (x\in E_+).
\]
(See, e.g. \cite[Thm. 26.4]{Luxemburg1971} and cf.{}  
\cite[Prop. 1.2.11]{Meyer-Nieberg1991}.) The next result is a
consequence of  \cite[p.143, Ex. 7]{Schaefer1974} 
and \cite[Thm. 1.2.10]{Meyer-Nieberg1991}. 
For the convenience of the reader, we give a proof.

\begin{thm}\label{grp.t.atomic-base}
Let $E$ be a Banach lattice and let $A$ be its set of positive
normalized atoms. Then for each finite subset
$F\subseteq A$ the space
\[   \spann(F)  = \bigoplus_{a\in F} \K a 
\]
is a projection band with band projection\quad $\displaystyle P_F = \sum_{a\in F} P_a$.
\quad Suppose, in addition, that  $E$ is atomic. Then 
\beq\label{grp.eq.atomic}  
\Id_E = \sum_{a\in A} P_a
\eeq
as a strongly order convergent series. Each band $B$ in $E$ is generated
(as a band) by $A \cap B$.
\end{thm}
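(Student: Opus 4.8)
The plan is to treat the three assertions in turn, the decomposition of the identity doing the real work and the statement about bands following quickly from it. \emph{Finite subsets.} First I would note that distinct elements $a,b\in A$ are disjoint: since $0\le a\wedge b\le a$ and $0\le a\wedge b\le b$ we have $a\wedge b\in E_a\cap E_b=\K a\cap\K b$, so if $a\wedge b\neq 0$ then $\K a=\K b$ and hence $a=b$ (both being positive of norm one). Consequently, for a finite $F\subseteq A$ the band projections satisfy $P_aP_b=0$ for all $a\neq b$ in $F$ (because $\ran P_b=E_b\subseteq E_a^d=\ker P_a$), and so $Q:=\sum_{a\in F}P_a$ is a positive idempotent. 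A short computation then gives $\ran Q=\sum_{a\in F}E_a=\spann(F)$ and $\ker Q=\bigcap_{a\in F}\ker P_a=\bigl(\sum_{a\in F}E_a\bigr)^d=\spann(F)^d$; here the only mildly delicate point is that a sum $\sum_{a\in F}v_a=0$ with $v_a\in E_a$ and the $v_a$ pairwise disjoint forces every $v_a=0$, which yields the identity for $\ker Q$. Thus $E=\spann(F)\oplus\spann(F)^d$ with associated projection $Q$, which is exactly the assertion that $\spann(F)=\bigoplus_{a\in F}\K a$ is a projection band with band projection $P_F=\sum_{a\in F}P_a$.

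\emph{Decomposition of the identity.} Assume now that $E$ is atomic. For $x\in E_+$ the net $\bigl(P_Fx\bigr)_F$ over finite $F\subseteq A$ (directed by inclusion) is increasing and bounded above by $x$, and I claim $\inf_F\,(x-P_Fx)=0$. Indeed, if $0\le u\le x-P_ax$ for every $a\in A$ (taking the singleton $F=\{a\}$), then $0\le P_au\le P_a(x-P_ax)=0$, so $P_au=0$, i.e.\ $u$ is disjoint from $a$; as this holds for all $a\in A$, the defining property $A^d=\{0\}$ of atomicity forces $u=0$. Hence $\sup_FP_Fx=x$ for $x\in E_+$, and for arbitrary $x\in E$ the positivity of $\Id-P_F$ gives $|x-P_Fx|\le(\Id-P_F)|x|=|x|-P_F|x|$, whose right-hand side decreases to $0$ by the case just treated; so $(P_Fx)_F$ order converges to $x$ for every $x\in E$. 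Read pointwise on $E$, this is precisely the statement $\Id_E=\sum_{a\in A}P_a$ as a strongly order convergent series; and when $E$ in addition has order continuous norm (the situation relevant for the applications in this paper) the convergence is then automatically in norm.

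\emph{Bands.} Let $B$ be a band in $E$ and set $B':=(A\cap B)^{dd}$, the band generated by $A\cap B$; trivially $B'\subseteq B$. For $x\in B_+$ and $a\in A$ we have $0\le P_ax\le x$, hence $P_ax\in B$ by solidity; writing $P_ax=\alpha_a a$ with $\alpha_a\ge 0$ we conclude that $a\in A\cap B$ whenever $\alpha_a\neq 0$, so $P_ax\in B'$ in all cases. Therefore $P_Fx=\sum_{a\in F}P_ax\in B'$ for every finite $F$, and since $B'$ is order closed and $P_Fx\uparrow x$ by the previous step, $x\in B'$. As $B=B_+-B_+$, this yields $B\subseteq B'$, hence $B=B'$.

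\emph{Main obstacle.} The crux is the identity $\sup_FP_Fx=x$ in the second step. One cannot obtain it by simply taking suprema and invoking the order continuity of band projections, because a general atomic Banach lattice need not be Dedekind complete; the way around this is the direct computation above showing that the decreasing net of ``defects'' $x-P_Fx$ has infimum $0$, which uses nothing beyond the idempotency of the $P_a$ and the definition $A^d=\{0\}$. A secondary point is only to fix the meaning of ``strongly order convergent''; interpreted as pointwise (on $E$) order convergence of the net of partial sums, the statement holds in full generality, consistently with the sources cited before the theorem.
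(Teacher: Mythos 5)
Your proof is correct and follows essentially the same route as the paper: pairwise disjointness of atoms gives the finite band projections, atomicity ($A^d=\{0\}$) applied to the positive ``defects'' $x-P_ax$ yields $\sup_F P_Fx=x$, and the band statement follows since each $P_ax$ lies in $(A\cap B)^{dd}$. The only cosmetic difference is in the finite case, where the paper deduces that $P_F$ is a band projection from $0\le P_F\le\Id$ via \cite[Lemma 1.2.8]{Meyer-Nieberg1991}, while you verify the decomposition $E=\spann(F)\oplus\spann(F)^d$ directly; both are fine.
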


\begin{rem}
There are different notions of ``order convergence'' 
in the literature, see \cite{Abramovich2005}. We employ the definition
found in \cite[Definition~1.1.9 i)]{Meyer-Nieberg1991}. For the case
of \eqref{grp.eq.atomic} this simply means  
\beq\label{grp.eq.atomic-full}    
x = \sup_F \sum_{a\in F} P_a x \qquad \text{for all $x\in E_+$},
\eeq
where the supremum is taken over all finite subsets of $A$. 
\end{rem}

\begin{proof}[Proof of Theorem~\ref{grp.t.atomic-base}]
Fix a  finite set $F \subseteq A$. If $a, b \in F$ with $a \neq b$,
then  $a \wedge b = 0$ and hence $P_a P_b = 0$.  It follows that 
\[ P_F := \sum_{a\in F} P_a
\]
is a projection (and $F$ is a linearly independent set).
Again by the pairwise disjointness of the elements of $F$,
\[ \sum_{a\in F} P_a x = \bigvee_{a\in F} P_a x \le x \qquad (x\in E_+).
\]
This shows that $0 \le P_F \le \Id$, and hence $P_F$ is a band
projection \cite[Lemma 1.2.8]{Meyer-Nieberg1991}. Since, obviously,
$\ran(P_F) = \spann(F)$, the first assertion is proved.

In order to prove \eqref{grp.eq.atomic-full}    
fix $x\in E_+$ and let $y\in E_+$ be such that $y \ge P_Fx$ for all
finite $F\subseteq A$. Then $y\ge P_a x$ for each $a\in A$  and hence
\[ 0 \le x - (x\wedge y)  \le x - P_a x  \perp a
\]
If $E$ is atomic, it follows that $x = x\wedge y$, i.e., $x\le y$. 
This yields \eqref{grp.eq.atomic-full}.

Finally, let $B\subseteq E$ be any band and let $0 \le x\in
B$. Then for each $a\in A$, $P_a x \in B$ (since $0 \le P_a x \le x$
and $B$ is an ideal). Hence, either $P_ax = 0$ or $a \in B$. 
It follows from \eqref{grp.eq.atomic}   that 
$B$ is generated by $A\cap B$.
\end{proof}

With this information at hand we now turn to
the representation theory.

\subsection*{A Structure Theorem}

Let $G\subseteq \BL(E)$ be a group of positive, invertible operators
on $E$.  (In particular, $G$ consists of lattice homomorphisms.)
Then for each $g\in G$ and $a\in A$ the element $g \cdot a \in
E$ must be an atom again. In effect
\beq\label{grp.eq.vphi_g} 
\vphi_g(a) := \norm{g\cdot a}^{-1} (g\cdot a) \in A.
\eeq
It is easy to see that
\beq\label{grp.eq.vphi}
\vphi: 
G \to \Sym(A),\qquad g\mapsto \vphi_g
\eeq
is a group homomorphism from $G$ to the group of all bijections on
$A$. The corresponding action 
\[  G \times A \to A, \qquad (g,a) \mapsto \vphi_g(a)
\]
is called the {\emdf induced action} of $G$ on $A$. 
 For each $a\in A$ the orbit mapping
\[
 G \to A,\qquad g \mapsto \vphi_g(a)
\]
of the induced action 
is continuous (with respect to the strong operator topology on $G$).  
If, in addition, $G$ is strongly compact, then each orbit 
\[ \vphi_G(a) = \{ \vphi_g(a) \suchthat g\in G\}
\]
is finite (since $A$ is discrete).
We denote by  $A/G$ the set of all these orbits. Then 
$A/G$ is a partition of $A$ into finite subsets.

\begin{lem}\label{grp.l.fix}
In the described situation, suppose that $G$ is compact.
Then for  $a\in A$ and $g\in G$:
$g \cdot a = a \,\,\gdw\,\, \vphi_g(a) = a$. 
Furthermore: $g = \Id_E \,\,\gdw \,\,\vphi_g = \id_A$.
\end{lem}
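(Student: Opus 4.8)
The plan is to treat the two equivalences separately, the first one carrying the essential content. For the equivalence $g\cdot a = a \iff \vphi_g(a)=a$, the forward implication is immediate from \eqref{grp.eq.vphi_g}: if $g\cdot a = a$ then $\norm{g\cdot a}=\norm a = 1$, hence $\vphi_g(a)= g\cdot a = a$. For the converse I would start from $\vphi_g(a)=a$, which by \eqref{grp.eq.vphi_g} means $g\cdot a = \lambda a$ with $\lambda := \norm{g\cdot a}>0$, and then rule out $\lambda\neq 1$ by a growth argument. Since $g$ is invertible and $\vphi$ is a group homomorphism, $\vphi_{g^n}(a)=a$ and hence $g^n\cdot a = \lambda^n a$ for all $n\in\Z$, so $\lambda^n=\norm{g^n\cdot a}\le \sup_{h\in G}\norm h$; the supremum is finite because a strongly compact subset of $\BL(E)$ is norm bounded (uniform boundedness principle), and letting $n\to\pm\infty$ forces $\lambda=1$, i.e. $g\cdot a = a$.

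For the second equivalence $g=\Id_E \iff \vphi_g=\id_A$, the forward direction is trivial. For the converse I would assume $\vphi_g=\id_A$, so that $g\cdot a = a$ for every $a\in A$ by the first part, and then show $gx=x$ for all $x\in E_+$ (which suffices since $E=E_+-E_+$). Fix $x\in E_+$: for each $a\in A$ the element $P_a x$ is a positive element of $E_a=\K a$, say $P_a x = t_a a$ with $t_a\ge 0$, so $g(P_a x)=t_a(g\cdot a)=P_a x$, and since $0\le P_a x\le x$ and $g\ge 0$ this gives $gx\ge g(P_a x) = P_a x$. Thus $gx$ is an upper bound of $\{P_a x:a\in F\}$, equivalently of $\sum_{a\in F}P_a x=\bigvee_{a\in F}P_a x$, for every finite $F\subseteq A$; invoking the strongly order convergent decomposition $\Id_E=\sum_{a\in A}P_a$ from Theorem \ref{grp.t.atomic-base} then yields $gx\ge \sup_F\sum_{a\in F}P_a x = x$. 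Finally, since $\vphi_{g^{-1}}=\vphi_g^{-1}=\id_A$ as well, the same argument applied to $g^{-1}$ gives $g^{-1}x\ge x$, and applying the positive operator $g$ to this inequality yields $x=g(g^{-1}x)\ge gx$; hence $gx=x$, and $g=\Id_E$.

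I do not expect a genuine obstacle here: compactness of $G$ is used only to guarantee norm boundedness in the first part, and the atomicity (and order continuity) of the lattice enters only through the identity decomposition of Theorem \ref{grp.t.atomic-base}; everything else is routine bookkeeping with positivity and disjointness. The one step deserving slight care is the order-theoretic conclusion that $gx$ dominating every finite partial sum $\sum_{a\in F}P_a x$ forces $gx\ge x$ — which is precisely where the atomicity hypothesis is indispensable.
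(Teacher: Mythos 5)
Your proof is correct and follows essentially the same route as the paper's: the first equivalence is settled by noting that $\norm{g^n\cdot a}=\norm{g\cdot a}^n$ must stay bounded over $n\in\Z$ by compactness of $G$, and the second by combining the atom-fixing property with the decomposition of Theorem \ref{grp.t.atomic-base}. The only (harmless) divergence is in the final step: the paper pulls $g$ through the supremum $x=\sup_F\sum_{a\in F}P_ax$ by invoking order continuity of the lattice isomorphism $g$, whereas you derive the two inequalities $gx\ge x$ and $g^{-1}x\ge x$ from positivity alone and combine them — a slightly more self-contained finish that reaches the same conclusion.
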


\begin{proof}
Fix $a \in A$ and $g\in G$. If $g \cdot a = a$  then $\vphi_g(a)=
a$, since $\norm{a} = 1$. Conversely, suppose that $\vphi_g(a) = a$. 
Then $g^n \cdot a = \norm{g \cdot a}^n a$ for all $n\in \Z$. By
compactness, $\norm{g \cdot a} = 1$, and hence $g\cdot a = a$ as claimed.
 
\prfnoi
Suppose that $\vphi_g(a)= a$ for all $a\in A$. Then, as we have just
seen,  $g \cdot a = a$ for all $a\in A$. So $g$ leaves all atoms
fixed. Since $g$ acts a lattice isomorphism and hence is order
continuous, it follows from Theorem \ref{grp.t.atomic-base} that $g = \Id_E$.
\end{proof}

We can now prove a theorem that is reminiscent of the Peter--Weyl
structure theorem and its applications to Banach space
representations of compact groups.

\begin{thm}[Structure Theorem]\label{grp.t.structure}
Let $E \neq \{0\}$ be an atomic Banach lattice  and let  
$A$ be its set of positive  normalized atoms. Let
$G\subseteq \BL(E)$ be a strongly compact group of positive invertible
operators on $E$, and let
$A/G$ be the set of orbits of elements
of $A$ under the induced action of $G$ on $A$. Then the following assertions hold:
\begin{aufzi}
\item For each orbit $F\in A/G$ the band $\spann(F)$ is 
  $G$-invariant, the corresponding band projection $P_F$ is
  $G$-intertwining, and $G$ acts irreducibly on $\spann(F)$. 

\item If $B\neq\{0\}$ is a $G$-invariant band in  $E$ on which
$G$ acts irreducibly, then $B = \spann(F)$ for some $F \in A/G$.

\item $I = \sum_{F\in A/G} P_F$ as a strongly  order-convergent
  series.

\item In the  case $\K = \C$, each eigenvalue of $G$ on $E$ is
  torsion.

\item If $G$ acts irreducibly on $E$, then $\dim(E)< \infty$ and 
$G$  has only finitely many eigenvalues. 
\end{aufzi}
\end{thm}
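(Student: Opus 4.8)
The plan is to prove assertions (a)--(e) in the order stated, each building on its predecessors.

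\emph{Part (a).} Fix an orbit $F \in A/G$; it is finite since $A$ is discrete and $G$ is strongly compact. For $g \in G$ and $a \in F$ the identity $g \cdot a = \norm{g \cdot a}\,\vphi_g(a)$ with $\vphi_g(a) \in F$ shows $g(\spann(F)) \subseteq \spann(F)$; applying this to $g^{-1}$ too gives $g(\spann(F)) = \spann(F)$, so the band $\spann(F)$ is $G$-invariant. Being a lattice isomorphism, $g$ also maps the disjoint complement $\spann(F)^d$ onto itself, hence respects $E = \spann(F) \oplus \spann(F)^d$ and commutes with the band projection $P_F$: this is the intertwining property. Finally, $\spann(F)$ is a finite-dimensional atomic Banach lattice whose atoms are the scalar multiples of the elements of $F$, so its closed ideals are exactly the spans of subsets of $F$; a nonzero $G$-invariant one is invariant under the transitive action of $\vphi_G$ on $F$, hence equals $\spann(F)$, which gives irreducibility. \emph{Part (b).} If $B \neq \{0\}$ is a $G$-invariant band on which $G$ acts irreducibly, then by atomicity of $E$ and Theorem \ref{grp.t.atomic-base} it contains some atom $a$, hence the entire orbit $F := \vphi_G(a)$, hence the nonzero band $\spann(F)$; by (a) the latter is a $G$-invariant closed ideal of $B$, so irreducibility forces $B = \spann(F)$.

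\emph{Part (c).} Theorem \ref{grp.t.atomic-base} gives $\Id_E = \sum_{a \in A} P_a$ in the order-convergent sense $x = \sup\bigl\{\sum_{a \in F'} P_a x : F' \subseteq A \text{ finite}\bigr\}$, $x \in E_+$. Every finite subset of $A$ lies in a finite union of orbits, so the finite unions of orbits are cofinal among finite subsets of $A$; restricting the supremum to them and using $P_F = \sum_{a \in F} P_a$ yields the strongly order-convergent series $\Id_E = \sum_{F \in A/G} P_F$. \emph{Part (d)} (case $\K = \C$). Let $\mu = (\mu_g)_{g \in G}$ be an eigenvalue with eigenvector $u \neq 0$. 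Since $g$ is a lattice homomorphism, $g\abs{u} = \abs{gu} = \abs{\mu_g}\,\abs{u}$, so $\{\abs{\mu_g}^n \abs{u} : n \in \Z\}$ lies in the compact orbit $\{g\abs{u} : g \in G\}$ and hence is bounded, forcing $\abs{\mu_g} = 1$. Pick an atom $a$ with $P_a u \neq 0$ and put $F := \vphi_G(a)$; by the intertwining property from (a), $P_F u$ is a nonzero eigenvector of $G\res{\spann(F)}$ for $\mu$. The crucial observation (a reformulation of Lemma \ref{grp.l.fix}) is that the homomorphism $g \mapsto \vphi_g\res{F}$ from $G$ into the finite group $\Sym(F)$ has kernel acting as the identity on $\spann(F)$; thus its image $H$ is finite and $g^{\abs{H}}\res{\spann(F)} = \Id$ for all $g$, whence $\mu_g^{\abs{H}} = 1$ for all $g$, i.e. $\mu$ is a torsion eigenvalue with $m = \abs{H}$.

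\emph{Part (e).} Applying (b) with $B = E$ gives $E = \spann(F)$ for some orbit $F$, so $\dim E = \abs{F} < \infty$; and as in (d) the map $g \mapsto \vphi_g\res{F}$ embeds $G$ into the finite group $\Sym(F)$, so $G$ itself is finite. Every eigenvalue of $G$ is a homomorphism $G \to \C \setminus \{0\}$, and a finite group admits only finitely many such; hence $G$ has only finitely many eigenvalues. \emph{Main obstacle.} The delicate point is the passage from the merely strongly compact group $G$ to its finite-group behaviour on the orbit bands $\spann(F)$: one has to exclude nontrivial scalings $\norm{g \cdot a} \neq 1$ along an orbit, which is precisely where compactness enters (via Lemma \ref{grp.l.fix}) and which makes $G\res{\spann(F)}$ a finite permutation group---the mechanism underlying both the torsion statement (d) and the finiteness statement (e). A minor, purely bookkeeping, care point is the cofinality argument needed to regroup the order-convergent series in (c).
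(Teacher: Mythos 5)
Your proposal is correct and follows essentially the same route as the paper: orbit bands and the induced permutation action for (a)--(c), reduction to a nonzero component on a finite orbit plus Lemma \ref{grp.l.fix} for (d), and finite-dimensionality via (b) for (e). The only differences are cosmetic --- you use the order of the image of $G$ in $\Sym(F)$ where the paper uses $\abs{F}!$, and in (e) you count characters of the (finite) group where the paper invokes linear independence of eigenvectors for distinct eigenvalues.
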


\begin{proof}
a)\  It is obvious that $\spann(F)$ is $G$-invariant and $G$ acts
irreducibly on it. Since $G$ consists of 
lattice automorphisms, also  $\spann(F)^d$ is $G$-invariant, and hence
$P_F$ is $G$-intertwining. 

\prfnoi
b)\ Let $B\neq\{0\}$ be any $G$-invariant band in $E$. Then $B$ is generated
(as a band) by $A \cap B$. By $G$-invariance,
$A \cap B$ is a union of $G$-orbits (for the induced action), i.e., a
union of elements of $A/G$. 
Hence, if $G$ acts irreducibly on $B$, $A\cap B$ must coincide with
precisely  one $G$-orbit of $A$, i.e., $A \cap B \in A/G$.

\prfnoi
c)\ follows from Theorem \ref{grp.t.atomic-base} since $A/G$ is partition of
$A$.

\prfnoi
d)\ Let $\lambda : G \to \C$ be an eigenvalue  of $G$ on $E$ and $0
\neq  x\in E$ a corresponding eigenvector. Then $\lambda$ is a
continuous homomorphism, and since $G$ is compact, $\lambda$ is
unimodular. By c), one must have $y := P_F x \neq 0$ for some $F\in A/G$,
and by a), $y$ is also an eigenvector corresponding to $\lambda$.
 
Let $g \in G$  and $n := \abs{F}$, the length of the (induced) $G$-orbit
$F$. Then  $g^{n!}$ acts (induced) on $F$ as the identity. Hence, by
Lemma \ref{grp.l.fix}, $g^{n!}$ acts (orginally) as the identity on
$\spann(F)$. This yields
\[  y =  g^{n!}y = \lambda_g^{n!}y
\]
and hence $\lambda_g^{n!} = 1$. As $g\in G$ was arbitrary, the eigenvalue $\lambda$
is torsion.

\prfnoi
e)\ If $G$ acts irreducibly on $E$,  then b) tells that 
$E$ is finite dimensional. As
eigenvectors belonging to different eigenvalues have to be linearly
independent, there can only finitely many eigenvalues, as claimed.
\end{proof}

\begin{rem}
For the special case of Banach sequence spaces, Theorem \ref{grp.t.structure}
has been first proved by  de Jeu and Wortel in  \cite[Theorem 5.7]{Jeu2014}.
\end{rem}

We now shall list several criteria 
for the group $G$ in Theorem~\ref{grp.t.structure} 
to be trivial. In Section~\ref{s.conv} we will translate those criteria
into sufficient conditions for the strong convergence of positive operator semigroups.

A positive linear operator $T$ on a Banach lattice $E$ is called 
{\emdf strongly positive} if $Tf$ is a quasi-interior point for every 
non-zero positive vector $f \in E$.

\begin{cor} \label{grp.c.misc}
Let $G$ be a strongly compact group of positive invertible operators
on an atomic Banach lattice $E\neq \{0\}$.
Then each one of the following 
assertions implies that $G = \{\Id_E\}$: 
\begin{aufziii}
\item $G$ is divisible (cf.~Appendix~\ref{app.sgp}).
\item $G$ has no clopen subgroups different from $G$. 
\item $G$ contains a strongly positive operator.
\item Every finite-dimensional $G$-invariant band of $E$ on which $G$ acts irreducibly 
has dimension $\le 1$.
\item $\K = \C$ and  $G$ is Abelian and does not have any non-constant 
torsion eigenvalues.
\end{aufziii}
\end{cor}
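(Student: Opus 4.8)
The plan is to prove each of the five implications (i)--(v) separately, in each case producing a structural obstruction that forces the orbit action $\vphi$ of $G$ on $A$ to be trivial; by Lemma \ref{grp.l.fix} this yields $G = \{\Id_E\}$. Throughout I will work with the homomorphism $\vphi\colon G \to \Sym(A)$ from \eqref{grp.eq.vphi}, whose orbits $A/G$ are all finite (since $G$ is strongly compact and $A$ is discrete), and with the fact that each $g\in G$ restricted to a band $\spann(F)$, $F\in A/G$, permutes a finite set of atoms, so $g^{|F|!}$ acts as the identity there.

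\emph{Strategy for each case.} For (i), divisibility of $G$: if $G$ were nontrivial, pick $g \neq \Id_E$; by Lemma \ref{grp.l.fix} there is an atom $a$ with $\vphi_g(a)\neq a$, so $\vphi_g$ has finite order $m>1$ on the orbit $F \ni a$. Divisibility gives $h\in G$ with $h^{m} = g$; then $\vphi_h^{m} = \vphi_g$ has order $m$ on $F$, whereas $\vphi_h$ restricted to the finite orbit $F$ has some order $k$, and $\vphi_h^m$ then has order $k/\gcd(k,m)$, which cannot equal $m$ unless $m \mid k$ --- iterating the divisibility, one produces elements whose orbit-order on $F$ is arbitrarily large, contradicting $|F| < \infty$. (Cleanly: the subgroup $\vphi(G)$ of $\Sym(F)$ is a finite divisible group, hence trivial.) For (ii), no proper clopen subgroups: the kernel of the orbit map $g\mapsto \vphi_g|_{\spann(F)}$ for a fixed $F\in A/G$ is open (it is the preimage of $\{\id\}$ under a continuous map into a discrete finite group $\Sym(F)$) and has finite index, hence is also closed; taking the intersection over all $F\in A/G$ --- but $A/G$ may be infinite, so instead argue that $\ker \vphi = \bigcap_F \ker(\vphi|_{\spann(F)})$ is closed, and is open because... here one must be slightly careful: $\ker\vphi$ is closed as an intersection of closed sets, and by Lemma \ref{grp.l.fix} equals $\{\Id_E\}$ iff $\vphi$ is trivial. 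The right move for (ii): for each $F$, $\ker(\vphi|_F)$ is clopen of finite index in $G$; if $\vphi$ is nontrivial, choose $F$ with $\vphi|_F$ nontrivial, so $\ker(\vphi|_F)$ is a proper clopen subgroup, contradiction. For (iii), $G$ contains a strongly positive $T$: then $Ta$ is quasi-interior for every atom $a$; but $Ta$ is a positive multiple of the atom $\vphi_T(a)$, and an atom can be quasi-interior only if $E$ is one-dimensional (since the principal ideal generated by $\vphi_T(a)$ is $\K\cdot\vphi_T(a)$, which must be dense, forcing $E = \K\cdot\vphi_T(a)$), whence $A$ is a singleton, $\Sym(A)$ is trivial, and $G = \{\Id_E\}$.

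For (iv), the condition that every finite-dimensional $G$-invariant band on which $G$ acts irreducibly has dimension $\le 1$: by Theorem \ref{grp.t.structure}(a) each orbit band $\spann(F)$, $F\in A/G$, is $G$-invariant and $G$-irreducible and finite-dimensional (dimension $|F|$), so the hypothesis forces $|F| = 1$ for every $F$, i.e.\ $\vphi_g = \id_A$ for all $g$, and again Lemma \ref{grp.l.fix} finishes. For (v), $\K = \C$, $G$ Abelian with no non-constant torsion eigenvalues: fix an orbit $F\in A/G$; then $G$ acts on the finite-dimensional space $\spann(F)\cong \C^{|F|}$ as a compact abelian group of operators, so (being simultaneously diagonalizable, e.g.\ via averaging an inner product and using commutativity) $\spann(F)$ decomposes into one-dimensional $G$-eigenspaces; by Theorem \ref{grp.t.structure}(d) these eigenvalues are torsion, hence by hypothesis constant, so $G$ acts as the identity on $\spann(F)$, which by Lemma \ref{grp.l.fix} (applied to each $a\in F$) means $\vphi_g$ fixes each $a\in F$; as $F$ was arbitrary, $\vphi$ is trivial and $G = \{\Id_E\}$.

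The main obstacle is case (i): one must argue carefully that a compact (hence, via the orbit representations, profinite-like) divisible group acting on the discrete set $A$ must act trivially. The clean route is to observe that for each finite orbit $F$ the image $\vphi(G) \le \Sym(F)$ is a \emph{finite} group which is a quotient of the divisible group $G$, hence itself divisible, hence trivial (a finite divisible group is trivial, since for $g$ of order $m$ and any prime $p\mid m$ there is $h$ with $h^p = g$, forcing $p\cdot\mathrm{ord}(h) \mid$ never-stabilizing orders --- more simply: divisibility forces the group to have no nontrivial finite quotients, and a finite divisible group is its own finite quotient). Making this last elementary fact precise --- and confirming that "divisible" in the sense of Appendix \ref{app.sgp} does give "no nontrivial finite quotients" --- is the one genuinely delicate point; everything else is a short application of Theorem \ref{grp.t.structure} and Lemma \ref{grp.l.fix}.
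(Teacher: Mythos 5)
Your proposal is correct and follows essentially the same route as the paper: reduce everything via Lemma \ref{grp.l.fix} to showing that the induced action $\vphi$ on the discrete set of atoms is trivial, then handle each hypothesis on a single finite orbit $F$ (finite quotients of divisible groups are trivial for (i); a clopen stabilizer/kernel for (ii); a quasi-interior atom forces $\dim E=1$ for (iii); and Theorem \ref{grp.t.structure} for (iv) and (v)). The only cosmetic differences are that in (ii) you use the kernel of the action on an orbit where the paper uses the stabilizer of a single atom, and in (i) you re-derive the divisible-into-finite fact that the paper simply cites.
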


\begin{proof}
By Lemma \ref{grp.l.fix} it it suffices to prove 
in each of the mentioned cases that the group homomorphism 
$\vphi$, defined  in \eqref{grp.eq.vphi}, is trivial. We fix $a\in A$
and abbreviate $F := \vphi_G(a)$. 

\prfnoi
1)\ $G$ acts transitively on $F$, which is a finite set.
By a standard result from group theory, each homomorphism from a
divisible group into a finite one must be trivial (see
 \cite[Lemma~2.3 and Proposition~2.4]{GerlachConvPOS} for a proof).
Hence $F = \{a\}$.

\prfnoi
2) The set $H := \{g \in G \suchthat \vphi_g(a) = a\}$
is a clopen subgroup of $G$ (since $A$ is discrete),  so $H = G$.

\prfnoi
3) Suppose that $g\in G$ is strongly positive. Then 
$\vphi_g(a)$ is a quasi-interior point and an atom, hence $\dom(E) =1$.
In particular, $F = \{a\}$.

\prfnoi
4)\ By Theorem \ref{grp.t.structure}, $\spann(F)$  is a 
finite-dimensional $G$-invariant
band of $E$ on which  $G$ acts irreducibly.  Hence $1 \le \abs{F} =
\dim(\spann(F))\le 1$, by assumption. If follows that $F= \{a\}$.  

\prfnoi
5)\  Let $m := \abs{F}$. We may 
consider $G$ as a compact Abelian group of $m \times m$-matrices
acting on $\spann(F) \cong \C^m$.  Since $G$ is commutative, 
it is simultaneously diagonalizable. Each diagonal entry in a
simultaneous diagonalization is an eigenvalue of $G$. 
By Theorem \ref{grp.t.structure} such an eigenvalue is torsion, and hence, by
asumption, trivial. This means that $G$ acts as the identity on $\spann(F)$,
which implies that $F = \{a\}$.
\end{proof}

\section{Convergence of Positive Semigroups} \label{s.conv}

We shall now combine  Theorem~\ref{psg.t.main} with the findings of the
previous section to obtain general results about strong convergence
of positive operator semigroups.  In all results of this section we shall
take the following 
hypotheses (those of Theorem~\ref{psg.t.main}) as a starting point:
\begin{itemize}
\item $E$ is a Banach lattice;
\item $T= (T_s)_{s\in S}$ is a positive and bounded operator semigroup 
on $E$;
\item $T$ restricts to a bounded semigroup on $E_y$ and
is $E_y$-to-$E$ quasi-compact for some quasi-interior point $y\in E_+$.
\end{itemize}
Let us call these our {\emdf standard assumptions} for the remainder
of this paper. The standard assumptions warrant that Theorem \ref{psg.t.main}
is applicable, and we freely make use of this fact in the following.

\subsection*{Spectral-Theoretic Consequences}

We  first draw some spectral-theoretic conclusions.

\begin{thm}\label{conv.t.spec}
Suppose that an operator semigroup $(T_s)_{s\in S}$ on 
a complex Banach lattice $E$ satisfies the standard assumptions.
Then the following assertions hold:
\begin{aufzi}
\item Each unimodular eigenvalue of $T$ is torsion.
\item If $T$ is irreducible, then it has only finitely many unimodular
  eigenvalues.
\item $T$ is strongly convergent if and only if $T$ has no 
 non-constant torsion eigenvalue.   
\end{aufzi}
\end{thm}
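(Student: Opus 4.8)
The plan is to derive all three assertions from Theorem~\ref{psg.t.main}, which under the standing standard assumptions says that $\calT_\infty$ is strongly compact and non-empty, that $E_\infty := [\ran(P_\infty)]$ is an atomic Banach lattice, and that $\calT$ acts on $E_\infty$ as a strongly compact group $\calG$ of positive invertible operators; combined with the dictionary between unimodular eigenvalues of $T$ and eigenvalues of $\calG$ furnished by parts d) and e) of Theorem~\ref{rep.t.Pinfty}. For a), let $\lambda = (\lambda_s)_{s\in S}$ be a unimodular eigenvalue of $T$ with eigenvector $0 \neq x$. By Theorem~\ref{rep.t.Pinfty}.d), $x \in E_\infty$ and there is a unique eigenvalue $\mu = (\mu_Q)_{Q\in\calG}$ of $\calG$ on $E_\infty$ with $\lambda_s = \mu_{T_s}$ for all $s\in S$. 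Now $E_\infty$ is atomic, so Theorem~\ref{grp.t.structure}.d) applies to the compact positive group $\calG$: the eigenvalue $\mu$ is torsion, i.e.\ there is $m\in\N$ with $\mu_Q^m = 1$ for all $Q\in\calG$. Evaluating at $Q = T_s$ gives $\lambda_s^m = 1$ for all $s\in S$, so $\lambda$ is a torsion eigenvalue of $T$.

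For b), assume $T$ is irreducible on $E$. By Theorem~\ref{psg.t.main}.d), $\calG$ acts irreducibly on $E_\infty$, so Theorem~\ref{grp.t.structure}.e) yields $\dim(E_\infty) < \infty$ and that $\calG$ has only finitely many eigenvalues. Since by Theorem~\ref{rep.t.Pinfty}.d) every unimodular eigenvalue $\lambda$ of $T$ arises as $\lambda_s = \mu_{T_s}$ from a (unique) eigenvalue $\mu$ of $\calG$, and distinct $\lambda$ must come from distinct $\mu$ (because $T_S\res{E_\infty}$ is dense in $\calG$, so $\mu$ is determined by $(\mu_{T_s})_{s\in S}$), the map $\lambda \mapsto \mu$ is injective; hence $T$ has at most as many unimodular eigenvalues as $\calG$ has eigenvalues, which is finite.

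For c), one implication is immediate: if $T$ is strongly convergent then $\calT_\infty$ is a singleton (equal to $\{P_\infty\}$) by the corollary to Theorem~\ref{rep.t.Pinfty}, so $\calG = \{\Id_{E_\infty}\}$, which has no non-constant eigenvalue at all; by part a) and the fact that every torsion eigenvalue of $T$ restricts to $E_\infty$ and thus corresponds to an eigenvalue of $\calG$, $T$ can have no non-constant torsion eigenvalue. Conversely, suppose $T$ has no non-constant torsion eigenvalue. By the corollary to Theorem~\ref{rep.t.Pinfty}, it suffices to show $\calG = \{\Id_{E_\infty}\}$. Now $\calG$ is a strongly compact group of positive invertible operators on the atomic Banach lattice $E_\infty$; moreover $\calG$ is Abelian, being contained in the commutative semigroup $\calT$ (which is the sot-closure of the Abelian set $T_S$). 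By Corollary~\ref{grp.c.misc}, criterion 5), it is enough to check that $\calG$ has no non-constant torsion eigenvalue. But if $\mu$ were such an eigenvalue of $\calG$, then by Theorem~\ref{rep.t.Pinfty}.e) the formula $\lambda_s := \mu_{T_s}$ defines a unimodular eigenvalue of $T$, which is torsion (since $\mu$ is) and non-constant: if $\lambda$ were constant equal to $\car$, then $\mu_{T_s} = 1$ for all $s$, whence by density of $T_S\res{E_\infty}$ in $\calG$ and continuity of $\mu$ we would get $\mu \equiv 1$, contradicting non-constancy of $\mu$. This contradicts the hypothesis on $T$, so no such $\mu$ exists, $\calG = \{\Id_{E_\infty}\}$, and $T$ converges strongly with limit $P_\infty$.

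The step I expect to require the most care is the bookkeeping around the correspondence $\lambda \leftrightarrow \mu$ in parts b) and c): one must use the density of $\{T_s\res{E_\infty} : s\in S\}$ in $\calG$ together with the continuity of group characters to transfer "non-constant" and "torsion" back and forth without loss, and to see that the correspondence is a bijection between unimodular eigenvalues of $T$ and eigenvalues of $\calG$. Everything else is a direct invocation of Theorems~\ref{psg.t.main}, \ref{grp.t.structure}, and Corollary~\ref{grp.c.misc}.
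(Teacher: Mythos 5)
Your proof is correct and follows essentially the same route as the paper: parts a) and b) are verbatim the paper's argument (transfer to the compact group $\calG$ on the atomic lattice $E_\infty$ via Theorem~\ref{rep.t.Pinfty} and then invoke Theorem~\ref{grp.t.structure}), and in c) the paper simply writes that the ``if'' direction ``follows from Corollary~\ref{grp.c.misc}'', which is exactly the criterion~5) argument you spell out. Your extra bookkeeping on the injectivity of $\lambda\mapsto\mu$ and on the commutativity of $\calG$ is correct and only makes explicit what the paper leaves implicit.
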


\begin{proof}
a)\  By Theorem \ref{rep.t.spec}, 
each unimodular eigenvalue of $T$ is the restriction
of an eigenvalue of 
\[ \calG := \calT\res{E_\infty},
\]
where $E_\infty = [\ran(P_\infty)]$. Since the latter space is atomic and $\calG$ is
compact, Theorem \ref{grp.t.structure} yields that this eigenvalue is torsion.

\prfnoi
b)\ If $T$ acts irreducibly on $E$ then, by Theorem \ref{psg.t.main},  
$\calG$ acts irreducibly on $[\ran(P_\infty)]$. Hence, $\calG$ has
only finitely many unimodular eigenvalues by Theorem \ref{grp.t.structure}. 
By Theorem \ref{rep.t.spec}, each eigenvalue of $T$ is the restriction of 
an eigenvalue of $\calG$. This proves  the claim.

\prfnoi
c)\ The ``if''-part follows from Corollary \ref{grp.c.misc}. For the
``only if''-part we suppose that $T_s \to P$ is strongly
convergent.  Then $P = P_\infty$ and $\calG =
\{\Id_{E_\infty}\}$. Since each unimodular eigenvalue of $T$ is the
restriction of an eigenvalue of $\calG$ (Theorem \ref{rep.t.spec}), 
$T$ has no non-constant unimodular eigenvalues.
\end{proof}

\subsection*{Sufficient Conditions for Convergence}

Apart from the spectral characterization of the previous theorem,
Corollary \ref{grp.c.misc} yields the following sufficient conditions
for the convergence of a positive semigroup.

\begin{thm}\label{conv.t.main1}
Suppose that an operator semigroup $(T_s)_{s\in S}$ on 
a Banach lattice $E$ satisfies the standard assumptions.
In addition, let at least one of the following conditions
be satisfied:
\begin{aufziii}
\item $S$ is essentially divisible (e.g.: $S$ is divisible or generates
  a divisible group; cf.~Appendix~\ref{app.sgp});
\item $S$ carries a topology such that $T$ is 
strongly continuous and the 
only clopen subsemigroup of $S$ containing $0$ is $S$ itself (e.g.: $S$ is connected).
\item $T_s$ is strongly positive for some $s\in S$.
\end{aufziii}
Then $T$ is strongly convergent.
\end{thm}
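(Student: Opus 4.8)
The plan is to reduce each of the three conditions (1), (2), (3) to the situation analysed in Corollary~\ref{grp.c.misc} for the compact group $\calG = \calT\res{E_\infty}$ acting on the atomic Banach lattice $E_\infty = [\ran(P_\infty)]$, and then invoke the characterization of strong convergence from the Corollary to Theorem~\ref{rep.t.Pinfty} (namely that $T$ converges strongly iff $T$ acts as the identity on $E_\infty$, equivalently $\calG = \{\Id_{E_\infty}\}$). By Theorem~\ref{psg.t.main} the standard assumptions guarantee that $\calT_\infty$ is strongly compact and non-empty, that $E_\infty$ is an atomic Banach lattice with order-continuous norm, and that $\calG$ is a compact topological group of positive invertible operators on $E_\infty$; so in each case it remains only to verify the relevant hypothesis of Corollary~\ref{grp.c.misc} for $\calG$.

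For condition~(3), if $T_s$ is strongly positive on $E$ for some $s\in S$, I would first check that the restriction $T_s\res{E_\infty}$ is strongly positive on $E_\infty$: given $0\neq f\in (E_\infty)_+$, by Lemma and Definition~\ref{psg.l.ranP}(c) the vector $P_\infty y$ is a quasi-interior point of $E_\infty$, and one argues that $T_s f = T_s P_\infty f$ is a quasi-interior point of $E$ dominated appropriately so that its image under $P_\infty$ (which is $T_s f$ again, since $T_s$ commutes with $P_\infty$ and acts as identity on $E_\infty$) is quasi-interior in $E_\infty$. Hence $\calG$ contains a strongly positive operator and Corollary~\ref{grp.c.misc}(3) gives $\calG = \{\Id\}$. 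For condition~(1), the map $s\mapsto T_s\res{E_\infty}$ extends to a surjective group homomorphism from the group generated by $T_S\res{E_\infty}$ (whose closure is $\calG$) with dense image; essential divisibility of $S$ passes to a divisible quotient, its image in $\calG$ is divisible and dense, and one notes that a compact group with a dense divisible subgroup has trivial induced action on the discrete set $A$ — so Corollary~\ref{grp.c.misc}(1) applies (the divisibility argument cited there, \cite[Lemma~2.3, Prop.~2.4]{GerlachConvPOS}, only needs a dense divisible subgroup since the orbits are finite and the orbit maps continuous).

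For condition~(2), suppose $S$ carries a topology making $T$ strongly continuous with the only clopen subsemigroup containing $0$ being $S$. Then for each $a\in A_{E_\infty}$ the stabilizer $S_a := \{s\in S : \vphi_{T_s}(a) = a\}$ — where $\vphi$ is the induced action of $\calG$ on $A_{E_\infty}$ from \eqref{grp.eq.vphi} — is a subsemigroup of $S$ containing $0$, and it is clopen because $A_{E_\infty}$ is discrete and the orbit map $s\mapsto \vphi_{T_s}(a)$ is continuous (being the composition of the continuous $s\mapsto T_s\res{E_\infty}$ with a normalization map into the discrete set $A_{E_\infty}$). By hypothesis $S_a = S$, so the orbit of $a$ is trivial for every $a$; since $T_S\res{E_\infty}$ is dense in $\calG$ and the induced action of $\calG$ on $A_{E_\infty}$ is continuous, it follows that $\vphi_g = \id$ for all $g\in\calG$, whence $\calG=\{\Id\}$ by Lemma~\ref{grp.l.fix}. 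In all three cases $\calG = \{\Id_{E_\infty}\}$, so $T$ acts as the identity on $E_\infty$ and is therefore strongly convergent (to $P_\infty$) by the corollary to Theorem~\ref{rep.t.Pinfty}.

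The main obstacle I anticipate is the bookkeeping in transferring a property of the \emph{original} semigroup $T$ on $E$ (strong positivity of some $T_s$; continuity in $S$) to the corresponding property of $\calG$ on $E_\infty$ — in particular checking carefully that ``strongly positive'' survives the passage to $[\ran(P_\infty)]$ with its modified norm and modulus $|x|_{P_\infty} = P_\infty|x|$, and that the topology/continuity hypothesis genuinely yields clopen stabilizers in the discrete atom set. Everything else is routine once those transfers are in place, since the heavy lifting is done by Theorem~\ref{psg.t.main}, Theorem~\ref{grp.t.structure}, Corollary~\ref{grp.c.misc} and the corollary to Theorem~\ref{rep.t.Pinfty}.
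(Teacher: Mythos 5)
Your proposal is correct and follows essentially the same route as the paper: reduce via Theorem~\ref{psg.t.main} to the compact group of positive invertible operators on the atomic lattice $[\ran(P_\infty)]$ and then verify the relevant hypothesis of Corollary~\ref{grp.c.misc} (equivalently, the triviality of the induced action on the atoms). Your variations --- using a dense divisible subgroup in case (1) where the paper shows divisibility of the whole group $\calT\res{E_\infty}$ by compactness, working with atom stabilizers in case (2) instead of pulling back an arbitrary clopen subgroup, and spelling out the transfer of strong positivity to $[\ran(P_\infty)]$ in case (3) --- are all sound and merely inline or slightly rearrange steps the paper delegates to Corollary~\ref{grp.c.misc}.
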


\begin{proof}
Theorem~\ref{psg.t.main} is applicable, so it suffices to consider the case
that $E\neq \{0\}$ is atomic and has order-continuous norm and that $\calT$ is a compact group of positive invertible operators on $E$. We must
show that $\calT$ acts trivially on $E$.

\prfnoi
1)\ By Corollary \ref{grp.c.misc} it suffices to show that $\calT$ is
divisible. But this follows from a straightforward compactness
argument.

\prfnoi
2)\ Let $\calH$ be any clopen subgroup of $\calT$. Then
$H := \{ s\in S \suchthat T_s \in \calH\}$ is a clopen subsemigroup of
$S$. (Note that $H \neq \leer$ since $\calH \neq \leer$ is open and
$T_S$ is dense in $\calT$.) By hypothesis, $H = S$, so 
$T_S \subseteq \calH$. Since $\calH$ is closed and $T_S$ is dense in
$\calT$, it follows that $\calH= \calT$. Hence, $\calT= \{ \Id_E\}$ 
by Corollary \ref{grp.c.misc}. 

\prfnoi
3)\ If $T_s$ is strongly positive, then $\calT$ contains a strongly 
positive operator, and we conclude with the help of
Corollary \ref{grp.c.misc}. 
\end{proof}

\begin{rems}
\begin{aufziii}
\item Condition 1) is satisfied, in particular, if  
$S =\R_+$ (divisible semigroup), but also 
if $S= \{0\} \cup [1, \infty)$ (not divisible, but generating a
divisible group).  Note that in the latter case, 
the semigroup direction is just a subordering of the natural one,
but does not coincide with it. Nevertheless, the associated
notions of ``limit'' do coincide.

\item Condition 1) is also satisfied when $S = [0, \infty)$, endowed 
with  the semigroup operation $(a,b) \mapsto a \vee b =
\max\{a,b\}$. The semigroup direction coincides with the natural ordering. 
This semigroup is neither divisible nor
does it generate a divisible group (it is not even cancellative). 
However, it is essentially divisible. 

On the other hand, this example is a little artificial, as each
element of $S$ is an idempotent, and hence a representation 
$T= (T_s)_s$ is just a family of projections with decreasing ranges
as $s$ increases. For such semigroups, the question of 
convergence can often be treated by other methods. 

\item The semigroup of {\emdf positive dyadic rationals} is
\[ D_+ := \{0\} \cup \bigl\{ \tfrac{k}{2^n} \suchthat k, n \in \N_0
\bigr\}.
\]
The semigroup direction on $D_+$ coincides with the usual ordering.
It is easy to see that $D_+$ is not essentially divisible. 
If we endow $D_+$ with its natural topology, $D_+$ is not
connected. However, $D$ is the only clopen subsemigroup of $D_+$
containing $0$.
(Actually, apart from $D_+$ itself there is no other
{\em open} subsemigroup of $D_+$ containing $0$.) 

Hence, from Theorem \ref{conv.t.main1} 
it follows that each strongly continuous representation 
of $D_+$ that satisfies the standard hypotheses is strongly convergent. 
Without strong continuity, however, this can fail.
In fact, let $D = D_+ - D_+$ denote the group generated by $D_+$ in the 
real numbers. Then $D$ has a subgroup of index $3$ (namely $3D$), so by the
same construction as in the proof of \cite[Theorem~2.5]{GerlachConvPOS}
we can find a positive and bounded representation of 
$(D,+)$ on the Banach lattice $\bbR^3$. The restriction of this representation
to $(D_+,+)$ satisfies the standard conditions, but it does not converge.

\item With Theorem \ref{conv.t.main1}, Condition 3), we  
generalize a result of Gerlach, cf.{} \cite[Theorem~4.3]{Gerlach2013}.
\end{aufziii}
\end{rems}

\medskip

\subsection*{Lattice Subrepresentations}

A closed linear subspace $F$ of Banach lattice $E$ is called a
{\emdf lattice subspace}, if it is a Banach lattice
with the order induced by $E$ but with respect to an equivalent norm.
A lattice subspace need not be a sublattice. 
(By Theorem \ref{psg.l.ranP}, the range of a positive projection is  always a
lattice subspace.)

Given a representation $T= (T_s)_{s\in S}$ on a Banach lattice $E$,
each  $T$-invariant lattice subspace gives rise to a 
{\emdf lattice subrepresentation}. So the lattice subrepresentations
are those subrepresentations where the underlying space is
a lattice subspace.

\begin{thm} \label{conv.t.latsub}
Suppose that an operator semigroup $T= (T_s)_{s\in S}$ on 
a Banach lattice $E$ satisfies the standard assumptions.
In addition,  suppose that  each finite-dimensional 
lattice subrepresentation of $T$ is   at most one-dimensional.
Then $T$ is strongly convergent.
\end{thm}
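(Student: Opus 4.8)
The strategy is to apply Theorem~\ref{psg.t.main} to reduce to the case where $E \neq \{0\}$ is an atomic Banach lattice with order-continuous norm and $\calT$ is a strongly compact group of positive invertible operators on $E$; by the Corollary after Theorem~\ref{rep.t.Pinfty} it then suffices to show that this group $\calT$ acts as the identity on $E$. Writing $G := \calT$ and invoking Corollary~\ref{grp.c.misc}, it is in turn enough to verify condition~4) there: every finite-dimensional $G$-invariant band of $E$ on which $G$ acts irreducibly has dimension $\le 1$. So the real content is to translate the hypothesis on $T$ (finite-dimensional lattice subrepresentations are at most one-dimensional) into the corresponding statement about $G = \calT$ acting on $E = [\ran(P_\infty)]$.

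\textbf{The key step.} Suppose $B \subseteq [\ran(P_\infty)]$ is a finite-dimensional $G$-invariant band on which $G$ acts irreducibly, with $\dim B = d$. By Theorem~\ref{grp.t.atomic-base}, $B = \spann(F)$ for a finite set $F = A \cap B$ of atoms, and the band projection $P_B$ onto $B$ is $G$-intertwining (Theorem~\ref{grp.t.structure}.a)). Now $B$ is a closed $\calT$-invariant lattice subspace of $[\ran(P_\infty)]$; I would like to promote it to a $T$-invariant finite-dimensional lattice subspace of the \emph{original} $E$. The natural candidate is $\tilde B := P_B P_\infty(E)$, or equivalently the $d$-dimensional subspace of $E$ spanned by the atoms of $[\ran(P_\infty)]$ lying in $F$ (these are genuine vectors of $E$). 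Since $P_\infty \in \calT$ and $P_B$ is $\calT$-intertwining, the composition $P_B P_\infty$ commutes with every $T_s$ (recall $Q T_s = T_s Q$ for all $Q \in \calT_\infty$, and $P_B$ commutes with all of $\calT$), so $\tilde B = \ran(P_B P_\infty)$ is $T$-invariant. It is finite-dimensional, and being the range of the positive projection $P_B P_\infty$ (which is a positive idempotent because $P_B$ and $P_\infty$ commute and are both positive idempotents), it is a lattice subspace by Lemma and Definition~\ref{psg.l.ranP}. Thus $\tilde B$ carries a finite-dimensional lattice subrepresentation of $T$, so by hypothesis $\dim \tilde B \le 1$, i.e.\ $d \le 1$.

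\textbf{Concluding and the main obstacle.} With condition~4) of Corollary~\ref{grp.c.misc} verified for $G = \calT$ on $[\ran(P_\infty)]$, that corollary gives $\calT = \{\Id_{[\ran(P_\infty)]}\}$, and then the Corollary after Theorem~\ref{rep.t.Pinfty} yields strong convergence of $T$, namely $\lim_{s\in S} T_s = P_\infty$. The step I expect to require the most care is checking that $P_B$ and $P_\infty$ genuinely commute (so that $P_B P_\infty$ is a positive projection with the right range) and that $\ran(P_B P_\infty)$ coincides, as an \emph{ordered} space, with $B \subseteq [\ran(P_\infty)]$ up to the identifications in Lemma and Definition~\ref{psg.l.ranP} — in particular that its dimension as a subspace of $E$ equals $\dim B$. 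The commuting is clear once one recalls $P_B \in \BL([\ran(P_\infty)])$ commutes with all of $\calT$ acting there while $P_\infty$ is the identity on that space, but one must be slightly careful about whether $P_B$, a priori only defined on $[\ran(P_\infty)]$, is being composed correctly with $P_\infty \colon E \to E$; spelling this out via the atomic decomposition $F = A \cap B$ (so $\tilde B = \bigoplus_{a \in F}\K a$ literally inside $E$) sidesteps the issue entirely, and along that route the argument is routine.
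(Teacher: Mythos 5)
Your proof is correct and follows essentially the same route as the paper: reduce via Theorem~\ref{psg.t.main} to the atomic case and verify condition~4) of Corollary~\ref{grp.c.misc}. The only difference is that you carefully spell out the translation step (that a finite-dimensional $\calT$-invariant band of $[\ran(P_\infty)]$ is the range of the positive projection $P_BP_\infty$ on $E$ and hence a finite-dimensional lattice subrepresentation of the original $T$), which the paper's proof asserts without detail.
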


\begin{proof}
As in the proof of Theorem \ref{conv.t.main1} 
it suffices to consider the case
that $E\neq \{0\}$ is atomic and that
$\calT$ is a compact group of positive invertible operators on $E$. 
It then follows that each finite-dimensional
$T$-invariant band of $E$ is at most one-dimensional.
Corollary~\ref{grp.c.misc} is applicable and yields the claim.
\end{proof}

\section{Conclusion: Some Classical Theorems Revisited} \label{s.classical}

In this section we start with a little historical survey and end with
demonstrating how  our approach leads to far-reaching generalizations
of the ``classical'' results.

\subsection*{Historical Note}

In 1982, Günther Greiner in the influential paper \cite{Greiner1982a} 
proved the following result as ``Corollary~3.11'':

\begin{thm}[Greiner 1982]\label{clas.t.greiner}
Let $T = (T_s)_{s\ge 0}$ be a positive contraction $C_0$-semigroup on
a space $E = \Ell{p}(\prX)$, $1\le p < \infty$, with the following
properties:
\begin{aufziii}
\item There is a strictly positive $T$-fixed vector;
\item For some $s_0 > 0$ the operator $T_{s_0}$ is a kernel operator.
\end{aufziii}
Then $\lim_{s\to \infty} T_s x$ exists for each $x\in E$.
\end{thm}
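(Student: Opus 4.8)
The plan is to deduce Greiner's theorem from Theorem~\ref{psg.t.main} (and its consequences in Section~\ref{s.conv}), after checking that the hypotheses of our ``standard assumptions'' are met. First I would locate a quasi-interior point to play the role of $y$. Since $T$ has a strictly positive fixed vector $h \in E_+$, and since on $E = \Ell{p}(\prX)$ with $1 \le p < \infty$ a strictly positive vector is exactly a quasi-interior point of the lattice, we may take $y := h$. Because $T_t y = y$ for all $t$, the point $y$ is a (sub-)fixed point of $T$; by Remark~\ref{psg.r.main}.1 the semigroup then restricts to a bounded semigroup on the principal ideal $E_y$. (Boundedness on $E$ itself is immediate, as $T$ consists of contractions.)

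Next I would verify the $E_y$-to-$E$ quasi-compactness. By hypothesis there is $s_0 > 0$ such that $T_{s_0}$ is a kernel operator on $\Ell{p}(\prX)$. By the results collected in Appendix~\ref{app.int} (Theorems~\ref{int.t.AM} and the characterisation in Lemma~\ref{AM.l.AM-char}), a positive kernel operator between $\Ell{p}$-spaces is AM-compact, i.e.\ it restricts to a compact operator $E_y \to E$. Hence, by Remark~\ref{psg.r.main}.2, $T$ is $E_y$-to-$E$ quasi-compact (the single operator $T_{s_0}$ already witnesses $\dist(\{T_s \st s \in S\}, \CO(E_y;E)) = 0$). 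Thus all standard assumptions of Section~\ref{s.conv} hold, and Theorem~\ref{psg.t.main} applies: $\calT_\infty$ is non-empty and strongly compact, and $\calT$ acts on $[\ran(P_\infty)]$ as a compact group of positive invertible operators on an atomic Banach lattice with order-continuous norm.

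Finally I would invoke a convergence criterion. Since $S = \R_+$ is divisible, condition~1) of Theorem~\ref{conv.t.main1} is satisfied (alternatively: $\R_+$ is connected, so condition~2) applies with $T$ strongly continuous). Therefore $T$ is strongly convergent, i.e.\ $\lim_{s\to\infty} T_s x$ exists for every $x \in E$, which is exactly the assertion. (The limit is the positive projection $P_\infty$.)

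I do not anticipate a serious obstacle; the only point needing a little care is the identification of ``strictly positive'' with ``quasi-interior point'' on $\Ell{p}$ and the fact that a strictly positive fixed vector is preserved under the semigroup so that Remark~\ref{psg.r.main}.1 gives boundedness on $E_y$. The substantive analytic content—that a positive kernel operator is AM-compact—is already packaged in the appendix, so the proof reduces to checking hypotheses and citing Theorems~\ref{psg.t.main} and~\ref{conv.t.main1}.
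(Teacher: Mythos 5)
Your proposal is correct and follows essentially the same route as the paper: the paper deduces Greiner's theorem from Theorem~\ref{clas.t.greiner-new} (itself an instantiation of Theorem~\ref{conv.t.main1}\,1) for the divisible semigroup $\R_+$) by observing precisely the two facts you check, namely that a strictly positive function in $\Ell{p}(\prX)$ is a quasi-interior point and that a kernel operator is AM-compact (Theorem~\ref{int.t.AM}). Your verification of the standard assumptions via Remarks~\ref{psg.r.main}.1 and \ref{psg.r.main}.2 and Lemma~\ref{AM.l.AM-char} is exactly the intended reduction.
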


For the proof, Greiner employed what has become known as ``Greiner's
$0/2$-law'' (see \cite[Theorem~3.7]{Greiner1982a} and also \cite{Greiner1982})
 and a result
of Axmann from \cite{Axmann1980}. Both results have involved proofs and make
use of the lattice structure on the regular operators on Banach
lattices with order-continuous norm. 
The relevance of Greiner's theorem
derives from the fact that the assumptions can be frequently verified
for semigroups arising in partial differential equations or in
stochastics.

\medskip
For a long time, Greiner's theorem stood somehow isolated within the
asymp\-totic theory of (positive) semigroups.  
The ``revival'' of Greiner's theorem as a theoretical result began
with a paper of Davies \cite{Davies2005} from 2005. Davies showed that 
the peripheral point spectrum of the generator $A$ of a $C_0$-semigroup 
$T$ of positive contractions on a space $E = \Ell{p}(\prX)$, $1\le p <
\infty$, has to be trivial in
the following cases: (1) $\prX$ is countable with the counting measure
and (2) $\prX$ is locally compact and second countable and $T$ has the
Feller property (i.e., each $T_s$ for $s > 0$ maps $E$  into the space of
continuous functions). Case (1) was subsequently generalized by
Keicher in \cite{Keicher2006} to bounded and positive $C_0$-semigroups on
atomic Banach lattices with order-continuous norm, and by Wolff
\cite{Wolff2008} to more general atomic Banach lattices.

Shortly after, Arendt in \cite{Arendt2008} generalized Davies' results
towards the following theorem.

\begin{thm}[Greiner 1982/Arendt 2008]\label{clas.t.arendt}
Let $A$ be the generator of a positive contraction $C_0$-semigroup $T
= (T_s)_{s\ge 0}$ on a space $E = \Ell{p}(\prX)$, $1\le p <\infty$. 
Suppose that  for some $s_0 > 0$ the operator $T_{s_0}$ is a kernel operator.
Then $\Pspec(A) \cap \ui \R \subseteq \{0\}$.
\end{thm}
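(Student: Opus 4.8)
The strategy is to deduce Theorem~\ref{clas.t.arendt} from the spectral part of Theorem~\ref{conv.t.spec} together with the classical fact (recalled in Appendix~\ref{app.int}, Theorem~\ref{int.t.AM}) that a positive kernel operator on an $\Ell{p}$-space is AM-compact. The point spectrum $\Pspec(A) \cap \ui\R$ corresponds precisely to the \emph{unimodular eigenvalues} of the semigroup $T$: if $A x = \ui\beta x$ with $0\neq x\in E$ and $\beta\in\R$, then $T_s x = e^{\ui\beta s}x$ for all $s\ge 0$, so $(e^{\ui\beta s})_{s\ge 0}$ is a unimodular eigenvalue of $T$; conversely such an eigenvalue forces $\ui\beta\in\Pspec(A)$. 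By Theorem~\ref{conv.t.spec}.a) every such eigenvalue is a torsion eigenvalue, i.e.\ $e^{\ui\beta s m}=1$ for all $s\ge 0$ and some $m\in\N$. But the map $s\mapsto e^{\ui\beta s m}$ on the \emph{connected} semigroup $S=\R_+$ can only be constantly $1$ if $\beta = 0$. Hence $\Pspec(A)\cap\ui\R\subseteq\{0\}$.

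\textbf{Key steps, in order.}
First I would verify that the standard assumptions of Section~\ref{s.conv} are met. Here $E=\Ell{p}(\prX)$ with $1\le p<\infty$, which is a Banach lattice with order-continuous norm; $T$ is positive and bounded (it is a contraction semigroup). One needs a quasi-interior point $y\in E_+$ with $T_t y\le y$ for all $t$ (a sub-fixed point), so that by Remark~\ref{psg.r.main}.1 the semigroup restricts to a bounded semigroup on $E_y$; and one needs $E_y$-to-$E$ quasi-compactness, which by Remark~\ref{psg.r.main}.2 holds as soon as $T_{s_0}$ is AM-compact for some $s_0$. The hypothesis that $T_{s_0}$ is a kernel operator gives AM-compactness via the appendix. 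The existence of a quasi-interior sub-fixed point is the one genuinely nontrivial input: on $\Ell{p}(\prX)$ a strictly positive function $y$ is a quasi-interior point, and one obtains a sub-fixed point either by a direct construction or — in the localizable/$\sigma$-finite setting typical here — by using that a positive contraction semigroup on $\Ell{p}$ admits a strictly positive "super-harmonic"/excessive function; alternatively one works on the closed ideal generated by the orbit of a fixed strictly positive vector and notes the conclusion about $\Pspec(A)\cap\ui\R$ is local to invariant ideals. With the standard assumptions in force, Theorem~\ref{psg.t.main} applies and Theorem~\ref{conv.t.spec}.a) gives torsion of all unimodular eigenvalues. Then I finish with the elementary observation about $\R_+$ being connected (equivalently: apply the "connectedness" clause of Theorem~\ref{conv.t.main1}'s reasoning at the level of eigenvalues — a continuous homomorphism from $\R_+$ into the finite group of $m$-th roots of unity is trivial).

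\textbf{Main obstacle.}
The delicate point is establishing a quasi-interior sub-fixed point $y$ (or otherwise arranging that the standard assumptions hold), since Arendt's theorem as stated does \emph{not} assume a fixed vector — only the kernel-operator condition. The resolution is to exploit that $\Pspec(A)\cap\ui\R\subseteq\{0\}$ can be checked after restricting to any $T$-invariant closed ideal containing a given eigenvector: choosing a strictly positive $g$, the closed ideal $E_g\!{}^{\text{cl}}$ is $T$-invariant, contains all the lattice structure needed, and carries a quasi-interior point; and for a positive contraction semigroup on $\Ell{p}$ one can produce a super-fixed strictly positive vector by a Cesàro/weak-limit argument or by solving $(\lambda - A)u = g$ and letting $\lambda\downarrow 0$ (using positivity of the resolvent). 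Once this technical reduction is in place, everything else is a direct citation of the machinery built in Sections~\ref{s.psg}--\ref{s.conv}.
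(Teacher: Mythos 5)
Your overall architecture is the right one and matches how the paper disposes of this theorem: it is obtained as the special case $S=\R_+$ of Theorem~\ref{clas.t.gluha}, using that kernel operators are AM-compact (Theorem~\ref{int.t.AM}) and that $\R_+$ is divisible. The genuine gap sits exactly where you flag the ``main obstacle'': the production of a quasi-interior sub-fixed point. None of the devices you propose works in general. A Ces\`aro or resolvent limit $(\lambda-A)^{-1}g$ as $\lambda\downarrow 0$ need not converge, and a positive contraction $C_0$-semigroup on $\Ell{p}$ need not admit \emph{any} strictly positive sub-invariant vector: for the translation semigroup on $\Ell{p}(\R)$, a function $y>0$ with $T_t y\le y$ for all $t\ge 0$ would have to be monotone and hence cannot lie in $\Ell{p}$. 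Moreover, the closed ideal generated by an arbitrary strictly positive $g$ in $\Ell{p}(\prX)$ is all of $E$, so restricting to it buys nothing.

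The missing idea is that the fixed point must be manufactured from the eigenvector itself. If $Af=\ui\beta f$ with $f\neq 0$, then $\abs{f}=\abs{T_s f}\le T_s\abs{f}$ for all $s$; since each $T_s$ is a contraction and the $\Ell{p}$-norm is strictly monotone, this forces $T_s\abs{f}=\abs{f}$. Restricting to the band $\{\abs{f}>0\}$, which is $T$-invariant and contains $f$, makes $\abs{f}$ a quasi-interior \emph{fixed} point there, and from that moment on your citation chain (standard assumptions via AM-compactness of $T_{s_0}$, Theorem~\ref{conv.t.spec}.a), and the triviality of torsion eigenvalues over $\R_+$) goes through verbatim. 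Alternatively --- and this is what the paper actually does in proving the more general Theorem~\ref{clas.t.gluha}, where no contractivity is available --- one passes to the quotient by the closed invariant ideal $J=\{x\in E \st \lim_{s} \vphi(T_s\abs{x})=0\}$ for a suitable positive functional $\vphi$ with $\vphi(\abs{f})>0$; in $E/J$ the image of $\abs{f}$ becomes a fixed point, and AM-compactness survives the quotient by Theorem~\ref{AM.t.factor}. Either repair closes the gap; without one of them the standard assumptions are simply not verified.
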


This result is ``Theorem 3.1'' in Arendt's paper
\cite{Arendt2008}. Interestingly, as observed by Gerlach in
\cite{Gerlach2013}, it already appears in Greiner's 1982 paper, namely 
in the first paragraph of his proof of Theorem \ref{clas.t.greiner}
(i.e., his ``Corollary~3.11''). We
will thus call Theorem \ref{clas.t.arendt} the {\em Greiner--Arendt theorem}. 
Arendt points out that Theorem \ref{clas.t.arendt} implies Davies' result: 
in case (1) every positive operator is a kernel operator, whereas in case (2) the Feller 
property implies that each $T_s$ for $s > 0$ is a kernel
operator. (This follows from Bukhvalov's characterization of kernel
operators, cf.~\cite[Corollary 2.4]{Arendt2008}.) 

Let us briefly sketch Arendt's proof of Theorem \ref{clas.t.arendt}: 
If $f\in E$ is an eigenvector of $A$ for the eigenvalue $\lambda \in \ui \R$,
then $T_s\abs{f} \ge \abs{f}$
for all $s\ge 0$. Since each $T_s$ is a contraction and the norm on
$\Ell{p}$ is strictly monotone, it follows that $\abs{f}$ is a fixed
point. By restricting to the set $\set{ \abs{f} > 0}$ one
can assume that $\abs{f}$ is strictly positive. Next, from the
weak compactness of the order interval $[0, \abs{f}]$ it follows that the semigroup 
is weakly relatively compact. Then the JdLG-theory enters
the scene and reduces to problem to an atomic Banach lattice with
order continuous norm. Finally, Keicher's analysis from \cite{Keicher2006} shows that the
dynamics there must be trivial, and hence $\lambda=0$.

Arendt's paper is remarkable in several respects. First of all, 
his proof of Theorem \ref{clas.t.arendt} employs the JdLG-theory which is 
central also to the more recent  work of Gerlach
and Glück, and to the present paper. Secondly, Arendt recalls
Greiner's Theorem \ref{clas.t.greiner} and  gives a 
proof (building, 
as Greiner did, on  Theorem \ref{clas.t.arendt})
under the additional assumption that the semigroup is
irreducible. (This proof  appears
to be the first complete one in English language, cf. \cite[Remark
4.3]{Arendt2008}.)
Thirdly, Arendt promotes Greiner's  result by illustrating its use with
several concrete examples.

Most remarkable of all, however,  is what is {\em not} written in
\cite{Arendt2008}: namely that 
Greiner's Theorem \ref{clas.t.greiner} almost
directly implies the Greiner--Arendt Theorem \ref{clas.t.arendt}. Indeed, one starts
exactly as in Arendt's proof until one has found the quasi-interior fixed point
$\abs{f}$; then  Greiner's theorem tells that  $\lim_{s\to \infty}
T_sf$ exists, and hence $\lambda = 0$ follows.

\medskip
In the following years the topic was taken up by M.~Gerlach and
J.~Glück. Gerlach \cite{Gerlach2013} discussed Greiner's approach in a general
Banach lattice setting and extended it to semigroups that merely dominate a kernel operator; he also noted
that the dominated kernel operator can be replaced by a compact
operator. In their quest to find a  unifying framework, and stimulated 
by ``Corollary~3.8'' in Keicher's paper \cite{Keicher2006}, 
Gerlach and Glück in \cite{Gerlach2017,GerlachConvPOS} finally
identified ``AM-compactness'' as the right property generalizing the
different cases. Alongside a unification, this led also to a major
simplification, since AM-compactness is  much more
easily shown directly than by  passing through the concept of a kernel
operator. (E.g., it follows directly from Theorem \ref{int.t.fact-Linfty} that a
Feller operator as considered by Davies is AM-compact.)  

Finally, Gerlach and Glück realized that strong continuity of the
semigroup can be dispensed with, since  arguments
requiring time regularity can be replaced by purely
algebraic ones. This led to proofs for most of the above-mentioned results
for semigroups without any time regularity. 

\medskip
Somewhat independently from the above development, Pich\'or and
Rudnicki proved convergence results for Markov semigroups which merely dominate a
non-trivial kernel operator \cite[Theorems~1 and~2]{Pichor2000}. Their
results are closely related to (and earlier than) the results of Gerlach \cite{Gerlach2013},
but their approach is different, focusing on $L^1$-spaces and employing methods from stochastics. 
Later on, in \cite{Pichor2016, Pichor2018a},
these authors adapted their original results to various situations
involving semigroups
on $L^1$-spaces, with numerous applications in mathematical biology.

\begin{rem}
As noted above, 
the implication ``Theorem \ref{clas.t.greiner}$\dann$Theorem
\ref{clas.t.arendt}'' is almost immediate. Now, 
in hindsight,  it becomes clear that  Arendt in \cite{Arendt2008} was 
also very close to  proving the converse implication
``Theorem \ref{clas.t.arendt}$\dann$Theorem \ref{clas.t.greiner}''.
Indeed, the weak compactness of order intervals in $\Ell{p}$-spaces
for $1\le p < \infty$ implies that on such spaces a positive bounded 
semigroup $T=(T_s)_{s\ge 0}$ 
with a quasi-interior fixed point  is relatively weakly compact. Hence, 
the ``triviality of the peripheral point spectrum'' asserted by
Theorem \ref{clas.t.arendt} implies that $T$ acts trivially
on  the ``reversible'' part of the corresponding JdLG-decomposition.
One can then infer strong convergence of $T$ if one knows
that $T$ is not just relatively weakly, but even relatively {\em strongly}
compact. And the latter holds, in fact, 
since kernel operators are AM-compact; but in this context this
was noted only later by Gerlach and Glück. 
\end{rem}

\subsection*{Old Theorems in a New Light}

Let us now review some of the abovementioned results in the light of 
our actual findings. First of all, consider the following result, 
which is merely an instantiation of Theorem~\ref{conv.t.main1} a), to
the (divisible!) semigroup $\R_+$. 

\begin{thm}\label{clas.t.greiner-new}
Let $T = (T_s)_{s\ge 0}$ be a positive and bounded (but not necessarily strongly
continuous) semigroup on a Banach lattice $E$
with the following
properties:
\begin{aufziii}
\item There is a quasi-interior point $y\in E_+$ and $c > 0$ such
  that $T_s y \le c y$ for all $s\ge 0$.
\item For some $s_0 > 0$ the operator $T_{s_0}$ is AM-compact.
\end{aufziii}
Then $\lim_{s\to \infty} T_s x$ exists for each $x\in E$.
\end{thm}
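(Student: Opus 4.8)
The plan is to obtain Theorem~\ref{clas.t.greiner-new} as a direct instance of Theorem~\ref{conv.t.main1}, with the semigroup $S=\R_+$. So the only work is to check that hypotheses (i) and (ii) give precisely the \emph{standard assumptions} of Theorem~\ref{conv.t.main1}, and that $\R_+$ satisfies condition~1) there.

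First I would verify that $T$ restricts to a bounded operator semigroup on the principal ideal $E_y$. Since $y$ is a quasi-interior point, $E_y$ is dense in $E$; and if $x\in E_y$ with $\abs{x}\le\lambda y$, then by (i) we get $\abs{T_s x}\le T_s\abs{x}\le\lambda T_s y\le\lambda c\, y$, so $\norm{T_s x}_y\le c\norm{x}_y$. Hence each $T_s$ maps $E_y$ into itself and the restricted semigroup is bounded uniformly in $s$. Next, $E_y$-to-$E$ quasi-compactness is immediate from (ii): since $T_{s_0}$ is AM-compact, Remark~\ref{psg.r.main}.2) applies and yields $\dist(\{T_s\suchthat s\ge 0\},\CO(E_y;E))=0$. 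Thus the standard assumptions hold, and Theorem~\ref{psg.t.main} (hence Theorem~\ref{conv.t.main1}) is applicable.

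Now $\R_+$ is divisible --- for every $s\ge 0$ and $n\in\N$ one has $s=n\cdot(s/n)$ with $s/n\in\R_+$ --- hence essentially divisible, so condition~1) of Theorem~\ref{conv.t.main1} is satisfied. That theorem therefore gives that $T$ is strongly convergent, i.e.\ $\lim_{s\in\R_+}T_s x$ exists for every $x\in E$; and because the semigroup direction on $\R_+$ is the usual ordering (the Example of Section~\ref{s.rep}), this is exactly the assertion that $\lim_{s\to\infty}T_s x$ exists for each $x\in E$. There is no genuine obstacle here: the proof is bookkeeping that matches hypotheses to the abstract machinery. The one point deserving a word of care is that, unlike in Remark~\ref{psg.r.main}.1), condition~(i) only makes $y$ a sub-fixed point \emph{up to the scalar} $c$, so $T$ need not be contractive on $E_y$ --- but the standard assumptions require only boundedness, which survives. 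It is also worth stressing that strong continuity of $T$ is nowhere used, which is precisely where this statement improves on Greiner's original Theorem~\ref{clas.t.greiner}.
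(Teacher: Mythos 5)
Your proposal is correct and follows exactly the route the paper takes: the paper presents Theorem~\ref{clas.t.greiner-new} as ``merely an instantiation of Theorem~\ref{conv.t.main1}~a), to the (divisible!) semigroup $\R_+$,'' which is precisely your argument. You in fact supply slightly more detail than the paper does (the explicit check that $\norm{T_s x}_y\le c\norm{x}_y$ so that the restriction to $E_y$ is bounded, and the observation that $y$ need only be a sub-fixed point up to the scalar $c$), and these verifications are accurate.
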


Theorem \ref{clas.t.greiner-new} is a slight strengthening of  Theorem~4.5 from
\cite{GerlachConvPOS}, where the quasi-interior point $y$ is required 
to be $T$-fixed. It implies
Greiner's Theorem \ref{clas.t.greiner} as a special case:
simply note that on $E:= \Ell{p}(\prX)$ a strictly positive function
is quasi-interior and that a kernel operator is AM-compact (Theorem \ref{int.t.AM}).

\medskip
In the next result, the requirement that one of the semigroup
operators is AM-compact is relaxed towards a mere domination property, however
on the expenses of strengthening other hypotheses.

\begin{thm}\label{clas.t.pichor-new}
Let $T = (T_s)_{s\ge 0}$ be a positive, bounded and irreducible (but not necessarily strongly
continuous) semigroup on a Banach lattice $E$ with order continuous norm. 
Suppose that the conditions are satisfied:
\begin{aufziii}
\item There is a quasi-interior point $y\in E_+$ such
  that $T_s y \le y$ for all $s\ge 0$.
\item For some $s_0 > 0$ there is an AM-compact operator $K\neq 0$ with $0 \le K \le T_{s_0}$. 
\end{aufziii}
Then $\lim_{s\to \infty} T_s x$ exists for each $x\in E$.
\end{thm}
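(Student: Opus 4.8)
The strategy is to reduce Theorem~\ref{clas.t.pichor-new} to Lemma~\ref{classical.l.dom}, which already packages exactly the domination hypothesis into a form from which Theorem~\ref{psg.t.main} applies, and then to invoke the convergence results of Section~\ref{s.conv}. First I would check that the standing hypotheses of Lemma~\ref{classical.l.dom} are met: the semigroup $T=(T_s)_{s\ge0}$ is bounded, positive, and irreducible; $E$ has order continuous norm; and $y$ is a quasi-interior \emph{sub-fixed} point since $T_sy\le y$ for all $s\ge 0$ (condition~1). Condition~2 gives $s_0>0$ and an AM-compact $K\neq 0$ with $0\le K\le T_{s_0}$. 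Hence Lemma~\ref{classical.l.dom} yields that $T$ is $E_y$-to-$E$ quasi-compact and restricts to a bounded semigroup on $E_y$; in particular the \emph{standard assumptions} of Section~\ref{s.conv} are in force, with $S=\R_+$.

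**Main step.** With the standard assumptions secured, Theorem~\ref{conv.t.main1} applies provided one of its three conditions holds. Here $S=\R_+$, which is a divisible semigroup, so condition~1 (essential divisibility) is satisfied. Therefore Theorem~\ref{conv.t.main1} gives that $T$ is strongly convergent, i.e.\ $\lim_{s\to\infty}T_sx$ exists for every $x\in E$, which is the assertion. (Alternatively one could argue directly: by Theorem~\ref{psg.t.main} the problem reduces to a strongly compact group $\calT$ of positive invertible operators on an atomic Banach lattice with order continuous norm, and then a compactness argument shows $\calT$ is divisible, so $\calT=\{\Id\}$ by Corollary~\ref{grp.c.misc}~1; but invoking Theorem~\ref{conv.t.main1} is cleaner.)

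**Where the work really sits.** The substance of the theorem is therefore entirely in Lemma~\ref{classical.l.dom}, whose proof (referenced to \cite[Lemma~3.12]{GerlachConvPOS}) is the genuine obstacle: one must upgrade a \emph{domination} $0\le K\le T_{s_0}$ with $K$ AM-compact and $K\neq 0$ to actual $E_y$-to-$E$ quasi-compactness of the whole semigroup. The key points there are that irreducibility together with order continuity of the norm forces the band generated by the range of $K$ (or rather the ideal it generates) to be all of $E$, so that the ``AM-compact part'' is not confined to a proper invariant ideal; and that iterating and using $T_sy\le y$ one can approximate some $T_s$ in the $\BL(E_y;E)$-norm by an AM-compact (hence, on $E_y$, compact into $E$) operator. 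I would treat that lemma as given (it is stated, with attribution, earlier in the excerpt) and keep the proof of Theorem~\ref{clas.t.pichor-new} to the two lines of bookkeeping above: verify the hypotheses of Lemma~\ref{classical.l.dom}, deduce the standard assumptions, and apply Theorem~\ref{conv.t.main1}~1) to the divisible semigroup $\R_+$.
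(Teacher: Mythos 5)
Your proposal is correct and is essentially identical to the paper's own proof: verify the hypotheses of Lemma~\ref{classical.l.dom} (with $y$ a quasi-interior sub-fixed point), conclude that the standard assumptions hold, and apply Theorem~\ref{conv.t.main1}~1) using the divisibility of $\R_+$. Your remarks on where the substance lies (in Lemma~\ref{classical.l.dom}) are accurate but not part of the argument itself.
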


\begin{proof}
By Lemma~\ref{classical.l.dom}, $T$ satisfies the standard assumptions
(see Section~\ref{s.conv}). Hence, as $\R_+$ is a divisible semigroup, 
the assertions follow from Theorem~\ref{conv.t.main1} a).
\end{proof}

Theorem \ref{clas.t.pichor-new} is a generalization of the
abovementioned results \cite[Theorems~1 and~2]{Pichor2000} of Pich\'or
und Rudnicki for stochastic $C_0$-semigroups on $L^1$-spaces.
For $C_0$-semigroups on Banach lattices, the theorem is due to Gerlach \cite[Theorem~4.2]{Gerlach2013}.

We note that irreducibility of the semigroup can be replaced by other assumptions
ensuring that the AM-compact operator $K$ is ``sufficiently large'' when compared
with the semigroup. A very general result of this type was proved by
Gerlach and Gl\"uck in \cite[Theorem~3.11]{GerlachConvPOS}.

\bigskip
Let us finally return to the  spectral-theoretic results (by
Davies, Keicher, Wolff and Greiner--Arendt) discussed above. In this direction, we
establish the following general theorem.

\begin{thm}\label{clas.t.gluha}
Let $T = (T_s)_{s \in S}$ be a bounded and positive
semigroup on a Banach lattice $E$. Suppose that for some
$s\in S$ the operator $T_s$ is AM-compact. 
Then the following assertions hold:
\begin{aufzi}
\item Each unimodular eigenvalue is torsion.
\item If $T$ is irreducible then there are only finitely many unimodular
  eigenvalues.  
 
\item Suppose that  {\rm 1)} $S$ is essentially divisible or {\rm 2)}
  $T$ is strongly continuous
with respect to some topology on $S$ such that the only clopen
  subsemigroup of $S$ containing $0$ is $S$ itself. 
  Then the only possible unimodular eigenvalue of $T$ is the constant one. 
\end{aufzi}
\end{thm}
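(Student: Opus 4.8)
The plan is to reduce Theorem~\ref{clas.t.gluha} to the already-established Theorems \ref{conv.t.spec} and \ref{conv.t.main1} by verifying that, under the stated hypotheses, the semigroup satisfies the \emph{standard assumptions} from Section~\ref{s.conv}. The standard assumptions require a quasi-interior point $y\in E_+$ such that $T$ restricts to a bounded semigroup on $E_y$ and is $E_y$-to-$E$ quasi-compact. Here we are only told that $E$ is a Banach lattice and that some $T_{s_1}$ (say, with $s_1\in S$) is AM-compact; there is no a priori quasi-interior point, and $T$ need not be bounded on any principal ideal. So the first and main task is to manufacture a suitable $y$ and, if necessary, to pass to a subspace on which everything works.

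The strategy I would follow: First, using AM-compactness of $T_{s_1}$ together with a fixed-point/compactness argument applied to the net $(T_s x)_{s\in S}$ for a suitable $0\le x\in E$, produce a nonzero positive vector $z$ that is \emph{almost fixed} in the sense that it lies in (or dominates a nonzero element of) $\calT_\infty z$; then the closed ideal $I_z := \overline{E_z}$ generated by $z$ is $T$-invariant and contains a quasi-interior point (namely $z$ itself, by construction of $I_z$). One must check that $T$ restricted to $I_z$ is still bounded (true, since $\|T_s\|_{\BL(I_z)}\le \|T_s\|_{\BL(E)}$) and that $T|_{I_z}$ is $E_z$-to-$I_z$ quasi-compact (true: $T_{s_1}$ is AM-compact on $E$, hence its restriction maps order intervals of $I_z$ into relatively compact sets, so by Lemma~\ref{AM.l.AM-char}/Lemma~\ref{int.t.AM} it restricts to a compact operator $E_z\to I_z$, giving $E_z$-to-$I_z$ quasi-compactness via Remark~\ref{psg.r.main}.2). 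Finally, I must check that $T$ restricts to a \emph{bounded} semigroup on $E_z$: this is the delicate point, since $z$ need not be a sub-fixed point. The natural fix is to \emph{replace} $z$ by a genuine quasi-interior sub-fixed point of $T|_{I_z}$, which is exactly what the almost-fixedness gives: if $z$ arises as $Rz'$ for $R\in\calT_\infty$ acting as a positive invertible on $E_\infty$, one can arrange (as in Arendt's argument sketched in Section~\ref{s.classical}, and as in the proof of Lemma~\ref{classical.l.dom}) that there is a quasi-interior fixed point $w$ of $T|_{I_z}$; then $T_ty\le y$ holds with $y=w$ on $E_w$ and boundedness on $E_w$ is automatic. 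Once the standard assumptions hold on the Banach lattice $I_z$ (or $I_w$), parts a) and b) and c) of the theorem follow verbatim from Theorem~\ref{conv.t.spec} a), b) and Theorem~\ref{conv.t.main1}, applied to the restricted semigroup — noting that unimodular eigenvalues of $T$ on $E$, once one knows their eigenvectors, localize to such an invariant ideal, so that torsion / finiteness / triviality on $I_z$ transfers back to $T$ on $E$.

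For the eigenvalue statements specifically: given a unimodular eigenvalue $\lambda$ of $T$ with eigenvector $0\neq f\in E$, the argument of Arendt (reproduced in Section~\ref{s.classical}) shows $T_s|f|\ge|f|$ for all $s$, and AM-compactness of $T_{s_1}$ forces, via the compactness of the orbit of $|f|$ and Lemma~\ref{rep.l.ct0}-type reasoning, that $|f|$ (or a band component of it) is essentially fixed; restricting to the ideal $I_{|f|}$ we land in the situation of Theorem~\ref{psg.t.main}, whose range-space $[\ran(P_\infty)]$ is atomic, and then Theorem~\ref{grp.t.structure} d) gives that $\lambda|_{I_{|f|}}$ is torsion, hence $\lambda$ is torsion on $E$. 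This proves a). For b), irreducibility of $T$ on $E$ forces $I_{|f|}=E$, so the reduction already lands on all of $E$ and Theorem~\ref{conv.t.spec} b) (equivalently Theorem~\ref{grp.t.structure} e)) gives finiteness. For c), under essential divisibility of $S$ or the topological clopen-subsemigroup hypothesis, Corollary~\ref{grp.c.misc} 1) (resp. 2)) makes the induced compact group $\calG$ on $[\ran(P_\infty)]$ trivial, hence $\calG$ has no nonconstant eigenvalue and therefore $T$ has no nonconstant unimodular eigenvalue on $I_{|f|}$; since every such eigenvalue of $T$ produces an eigenvector supported in some invariant ideal of this kind, it must be constant on $E$.

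The main obstacle I anticipate is the \emph{boundedness on the principal ideal} $E_y$ and, relatedly, producing a quasi-interior \emph{sub-fixed} point: a bare AM-compact operator hypothesis gives no obvious control of the orbit of a quasi-interior point in the AM-norm. The clean way around it is precisely the ``almost-fixed vector'' / ``Arendt-type fixed point'' construction: use AM-compactness to get relative compactness of orbits of order intervals, deduce via the JdLG machinery (or directly via Theorem~\ref{rep.t.Pinfty} applied to an auxiliary invariant ideal where $\calT_\infty$ is already known nonempty and compact) the existence of a nonzero positive fixed or sub-fixed quasi-interior point, and only then invoke Theorem~\ref{psg.t.main}. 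An honest write-up must be careful that this bootstrapping does not become circular — i.e., that the invariant ideal on which we first obtain compactness of $\calT_\infty$ is genuinely available before we know the standard assumptions globally; but since $T_{s_1}$ AM-compact directly yields, on the ideal generated by any positive vector with relatively compact orbit, the $E_z$-to-$I_z$ quasi-compactness, and boundedness of the \emph{restricted} semigroup on $I_z$ is free, Theorem~\ref{abs.t.main} applies there and the construction closes.
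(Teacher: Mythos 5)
There is a genuine gap at the decisive step. Your whole reduction hinges on manufacturing a quasi-interior \emph{sub-fixed} point inside a closed $T$-invariant ideal by a ``fixed-point/compactness argument applied to the net $(T_s x)_{s\in S}$''. But no such compactness is available: from a unimodular eigenvector $z$ one only gets that $y:=\abs{z}$ satisfies $T_s y\ge y$, i.e.\ the orbit $(T_sy)_{s\in S}$ is an increasing, norm-bounded, but order-\emph{unbounded} net. AM-compactness of $T_{s_1}$ controls images of order intervals, and $T_{s_1+t}[0,y]\subseteq T_{s_1}[0,T_ty]$ with $T_ty$ ranging over a set with no common order bound --- so neither the orbit of $y$ nor the orbit of $[0,y]$ is forced into a relatively compact set, and $\calT_\infty$ cannot be shown nonempty/compact on $\cls{E_y}$ at this stage. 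The Arendt-type argument you invoke to upgrade $T_sy\ge y$ to a fixed point uses contractivity plus strict monotonicity of the $\Ell{p}$-norm, neither of which is assumed here (the semigroup is merely bounded, $E$ is a general Banach lattice). So the bootstrapping you worry might be circular is in fact not available at all: restriction to the ideal generated by $\abs{z}$ does not yield the standard assumptions.

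The paper's proof gets around this with a device you never consider: a \emph{quotient}, not a restriction. Following Scheffold and Keicher, one picks $\vphi\in E'_+$ with $\vphi(y)>0$ and forms the closed $T$-invariant ideal $J=\{x\in E\suchthat \lim_{s\in S}\vphi(T_s\abs{x})=0\}$. Since $(\vphi(T_sy))_s$ is increasing and bounded, hence Cauchy, one gets $T_ty-y\in J$ for all $t$, so the image $\hat y$ of $y$ in $E/J$ is a genuine fixed point of the induced semigroup, while $y\notin J$ keeps the eigenvector alive. AM-compactness survives the passage to the quotient by Theorem~\ref{AM.t.factor}, and on the closed ideal of $E/J$ generated by $\hat y$ the standard assumptions hold (the fixed point makes boundedness on the principal ideal automatic); then Theorems~\ref{conv.t.spec} and~\ref{conv.t.main1} finish exactly as you intend. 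Your final reduction steps are the right ones, but without the quotient construction (or some substitute for it) the hypotheses of those theorems are never verified, so the proof as proposed does not close.
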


\begin{proof}
We combine the classical ideas from Scheffold \cite{Scheffold1971} 
as employed by Keicher in \cite[Theorem
3.1]{Keicher2006} with the theory developed in this paper. 

\prfnoi
a)\ Let $\lambda = (\lambda_s)_{s\in S}$ be a unimodular eigenvalue of
$T$, and let $0\neq z \in E$ be a corresponding eigenvector. 
Abbreviate $y := \abs{z} \in E_+$. Then
\[ 0 \neq y = \abs{z} = \abs{\lambda_s z} = \abs{T_sz}\le T_s \abs{z}
= T_s y\qquad
(s\in S).
\]
It follows that the net $(T_sy)_{s\in S}$ is increasing.

\prfnoi
One can find a positive
linear functional $\vphi \in E'_+$ such that $\vphi(y) > 0$. Define
\[ J := \{ x\in E \suchthat \lim_{s\in S} \vphi( T_s\abs{x}) = 0\}.
\]
It is routine to check that  $J$ is a closed and $T$-invariant ideal. 
Since $\vphi(T_sy) \ge \vphi(y) > 0$ for all $s\in  S$, we have $y \notin J$.
Moreover, $T_ty -y \in J$ since
\[ \vphi(T_s\abs{T_t y - y}) = \vphi T_s (T_t y - y) =
\vphi(T_{t{+}s}y)
- \vphi(T_s y) \qquad (s\in S)
\]
and $(\vphi(T_sx))_{s\in S}$ is increasing and bounded. 

\prfnoi
Since $J$ is a closed $T$-invariant ideal, the quotient space
$E_1 := E/J$ naturally carries the structure of a Banach lattice,
and the representation $T$ on $E$ induces a representation $\hat{T}$
on $E_1$ by 
\[  \hat{T}_s(x{+}J) := T_s x + J \qquad  (s\in S,\, x\in E).
\]
Let $\hat{z} := z+J$ and  $\hat{y} := y + J$ be the equivalence classes of $z$ and $y$ in $E_1= E/J$,
respectively.  Since the canonical surjection is a lattice
homomorphism, $\hat{y} = \abs{\hat{z}}$ in $E_1$. Since $y \notin J$,
$\hat{z} \neq 0$.  It follows that $\hat{z}$ is an eigenvector
of $\hat{T}$ for the eigenvalue $\lambda$.  

\prfnoi
Since $T_s y - y\in J$ for each $s\in S$,  
the point $\hat{y}$ is $T$-fixed for the induced semigroup on
$E_1$. Moreover, by the hypothesis and Theorem \ref{AM.t.factor}, for some $s\in S$
the operator $\hat{T}_s$ is AM-compact. Hence, when we restrict to the
closed ideal $E_2 := \cls{F_{\hat{y}}}$ generated by $\hat{y}$ in
$E_1$, 
we find that the semigroup $\hat{T}$ restricted to $E_2$ satisfies the
standard assumptions. Theorem \ref{conv.t.spec} then yields that
that $\lambda$ must be torsion.

\prfnoi
c)\ We start again as in the proof of a). By Theorem
\ref{conv.t.main1}, either condition 1) and 2) implies that 
$\hat{T}$ on $E_2$ is convergent. Then, by Theorem \ref{conv.t.spec}, we conclude that
$\lambda$ is constant. 
\end{proof}

Theorem \ref{clas.t.gluha} generalizes the Greiner--Arendt  Theorem
\ref{clas.t.arendt}: simply specialize $S = \R_+$ and note that kernel
operators are AM-compact (Theoren \ref{int.t.AM}). A fortiori, it
generalizes Davies' results  from \cite{Davies2005}. However, it also 
implies Keicher's result \cite[Theorem ~3.1]{Keicher2006} (but not
Wolff's), as on  an atomic Banach lattice with order-continuous norm all
order intervals are relatively compact, and hence all bounded operators are
AM-compact. Finally, Theorem \ref{clas.t.gluha} also generalizes 
\cite[Theorem~4.19]{GerlachConvPOS}.

\appendix

\section{AM-Compact Operators}\label{app.int}

Let $E$ be a Banach lattice and $F$ a Banach space. A bounded operator
$T: E \to F$ is called {\emdf AM-compact} if $T$ maps order intervals
of $E$ to relatively compact subsets of $F$.

\medskip
The following is a  useful characterization of AM-compactness
in the case that $E$ has a  quasi-interior point $y\in E_+$. 
Recall that this means that the principal ideal
\[    E_y := \{ x\in E \suchthat \text{there is $c \ge 0$ such that 
$\abs{x}\le cy$}\}
\]
is dense in $E$. We endow $E_y$ with its natural  AM-norm
\[ \norm{x}_y := \inf\{
 c \ge 0 \suchthat \abs{x}\le cy\}.
\]
It is well known that this turns $E_y$ into a Banach lattice,
isometrically lattice isomorphic to
$\Ce(K)$ (with $y$ being mapped to $\car$) for some compact Hausdorff
space $K$ (Krein--Krein--Kakutani theorem).

\begin{lem}\label{AM.l.AM-char}
Let $E$ be a Banach lattice with a quasi-interior point $y \in E_+$,
let $F$ be any other Banach space and $T: E \to F$ a bounded operator.
Then $T: E_y  \to F$ is compact if and only if $T[0,y]$ is relatively
compact, if and only if $T$ is AM-compact.
\end{lem}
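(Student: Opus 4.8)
The plan is to prove the chain of equivalences ``$T:E_y\to F$ compact'' $\Longleftrightarrow$ ``$T[0,y]$ relatively compact'' $\Longleftrightarrow$ ``$T$ AM-compact'' by establishing the two outer implications and the one easy direction, and I would organize it around the identification $E_y \cong \Ce(K)$ with $y \leftrightarrow \car$ provided by the Krein--Krein--Kakutani theorem recalled just above the statement. The key observation is that under this identification the order interval $[0,y]$ in $E$ corresponds to the order interval $[0,\car]$ in $\Ce(K)$, which is (up to a translation and scaling) precisely the closed unit ball $\Ball_{E_y}$ of $E_y$: indeed $[0,\car] = \tfrac12\car + \tfrac12[-\car,\car]$ and $[-\car,\car]$ is the unit ball of $\Ce(K)=E_y$.

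First I would prove ``$T:E_y\to F$ compact $\Rightarrow$ $T[0,y]$ relatively compact'': this is immediate since $[0,y]\subseteq E_y$ is norm-bounded in $E_y$ (every $x\in[0,y]$ satisfies $\norm{x}_y\le 1$), so its image under the compact operator $T\colon E_y\to F$ is relatively compact. Next, ``$T[0,y]$ relatively compact $\Rightarrow$ $T$ AM-compact'': given any order interval $[a,b]$ in $E$, I want to show $T[a,b]$ is relatively compact in $F$. Since $E_y$ is dense in $E$ it is tempting but wrong to assume $a,b\in E_y$; instead I would argue by approximation. The crucial point is that $[a,b] = a + [0, b-a]$, so it suffices to handle intervals $[0,u]$ with $u\in E_+$. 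For such $u$, and any $\veps>0$, using that $E_y$ is dense I can pick $v\in E_y$ with $\norm{u-v}<\veps$; replacing $v$ by $v^+\wedge u$ we may assume $0\le v\le u$ and $v\in E_y$. Then $[0,v]\subseteq [0,\norm{v}_y\, y]$, whose image under $T$ is relatively compact by hypothesis (scaling $T[0,y]$). Moreover for $x\in[0,u]$ one has $x\wedge v\in[0,v]$ and $\norm{x-x\wedge v}\le\norm{u-v}<\veps$, so $T[0,u]$ lies within $\norm{T}\veps$ of the relatively compact set $T[0,v]$; letting $\veps\downarrow 0$ gives that $T[0,u]$ is totally bounded, hence relatively compact.

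Finally, ``$T$ AM-compact $\Rightarrow$ $T:E_y\to F$ compact'': by definition $T[0,y]$ is relatively compact, and as noted above the closed unit ball of $E_y$ is $[-\car,\car]$ which equals $[0,\car]-\car$ under the $\Ce(K)$-identification, i.e.\ $\Ball_{E_y}\subseteq [0,y]-y = [-y,y]$ in $E$. Since $[-y,y] = [0,2y]-y$ has relatively compact image under $T$ (by AM-compactness, scaling and translating), so does $\Ball_{E_y}$; hence $T\colon E_y\to F$ maps the unit ball to a relatively compact set, i.e.\ it is compact. This closes the cycle. I expect the main obstacle to be the middle implication, specifically the care needed to reduce a general order interval $[a,b]$ (whose endpoints need not lie in $E_y$) to the interval $[0,y]$ via the density of $E_y$ and the lattice operations; the rest is essentially bookkeeping with the Krein--Krein--Kakutani representation and elementary total-boundedness estimates.
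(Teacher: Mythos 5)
Your proof is correct and follows essentially the same route as the paper's: the trivial implication from compactness of $T:E_y\to F$ to relative compactness of $T[0,y]$, an approximation argument exploiting the density of $E_y$ to pass from $[0,y]$ to arbitrary order intervals $[0,u]$ (and then to $[a,b]=a+[0,b-a]$), and the observation that the closed unit ball of $E_y$ is the order interval $[-y,y]$. The only immaterial differences are that the paper splits $0\le x\le cy+\abs{r}$ via the Riesz decomposition property where you instead use $x\wedge v$ and the identity $x-x\wedge v=(x-v)^+$, and that your intermediate expression ``$[0,\car]-\car$'' should read ``$2[0,\car]-\car$'' (as your very next clause correctly states).
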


\begin{proof}
Suppose that $T: E_y \to F$ is compact. Then, $T[0,y]$ is relatively compact.
If the latter is the case, then for each 
$c> 0$ the set
\[ T[-cy, cy] = T( -cy + 2c [0, y]) = T(-cy) + 2c T[0,y] 
\]
is also relatively compact. Let $u \in E_+$ and $\veps > 0$. 
Since $y$ is a quasi-interior point, there is $c> 0$ such that
$u \in [-cy, cy] + \Ball[0, \veps]$. We claim that
\beq\label{AM.e.AM-char}   [0, u] \subseteq [-cy, cy] + \Ball[0, \veps]
\eeq
as well. Indeed, write $u = z + r$ with
$\abs{z}\le cy$ and $\norm{r}\le \veps$ and let $0 \le x \le u$. 
Then $0 \le x \le \abs{z} + \abs{r} \le cy + \abs{r}$. By the
decomposition
property, there are $0 \le x_1 \le \abs{z}$ and $0 \le x_2\le \abs{r}$
with $x = x_1 + x_2$. Hence $x\in [-cy, cy] + \Ball[0, \veps]$ as
claimed.

It follows from \eqref{AM.e.AM-char} that
\[ T[0,u] \subseteq T[-cy, cy] + \Ball[0, \norm{T} \veps].
\]
Since $T[-cy, cy]$ is relatively compact, it admits a finite
$\norm{T} \veps$-mesh. Hence, $T[0,u]$ admits a finite $2 \norm{T} \veps$-mesh. 
As $\veps > 0$ was arbitrary, $T[0,u]$ is relatively compact. 

Finally, if $[u,v]$ is any non-empty order interval of $E$, then $v-u \ge 0$ and
$T[u,v] = Tu + T[0, v-u]$ is relatively compact, by what we have
already shown.
\end{proof}

The following theorem shows that AM-compactness is preserved when one
passes to a factor lattice with respect to an invariant closed ideal.

\begin{thm}\label{AM.t.factor}
Let $E$ be a Banach lattice, let $T\in \BL(E)$ be AM-compact, and let
$I\subseteq E$ be a $T$-invariant closed lattice ideal in $E$. 
Then the induced operator 
\[ T_/: E/I \to E/I \qquad T_/(x + I) := Tx + I \qquad (x\in E)
\]
is also AM-compact.
\end{thm}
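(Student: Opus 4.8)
The plan is to reduce the statement to the characterization of AM-compactness already at our disposal, namely Lemma~\ref{AM.l.AM-char}, but to do so I first need a quasi-interior point in the factor lattice; since $E$ need not have one, I would instead argue directly at the level of order intervals. Let $q\colon E\to E/I$ denote the canonical lattice homomorphism; it is a surjective Banach-lattice homomorphism of norm one. The key observation is that every order interval of $E/I$ is the image under $q$ of an order interval of $E$: given $\bar u, \bar v\in E/I$ with $\bar u\le \bar v$, pick representatives $u,v\in E$; then $w:=v-u$ satisfies $q(w)=\bar v-\bar u\ge 0$, and because $q$ is a lattice homomorphism we have $q(w^+)=q(w)$, so replacing $w$ by $w^+\in E_+$ we get an order interval $[0,w^+]$ in $E$ with $q([0,w^+])=[0,q(w^+)]=[0,\bar v-\bar u]$ by the Riesz decomposition property (any $0\le\bar x\le\bar v-\bar u$ lifts to some $0\le x\le w^+$). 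Translating by $\bar u$, we obtain $[\bar u,\bar v]=q(u+[0,w^+])$.

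With this in hand the proof is short. Given an arbitrary order interval $[\bar u,\bar v]$ of $E/I$, write it as $q(u+[0,w^+])$ as above. Since $T$ is AM-compact on $E$, the set $T(u+[0,w^+])=Tu+T[0,w^+]$ is relatively compact in $E$. The induced operator satisfies $T_/\circ q = q\circ T$, hence
\[
T_/([\bar u,\bar v]) = T_/\,q(u+[0,w^+]) = q\bigl(T(u+[0,w^+])\bigr),
\]
which is the continuous image of a relatively compact set and therefore relatively compact in $E/I$. As $[\bar u,\bar v]$ was arbitrary, $T_/$ is AM-compact. One should also note at the outset that $T_/$ is well defined and bounded precisely because $I$ is $T$-invariant and closed, which is part of the hypothesis.

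The only genuinely non-routine point is the lifting of positive elements below $q(w^+)$ to positive elements below $w^+$ in $E$, i.e. the surjectivity of $q$ restricted to the order interval $[0,w^+]$. This is a standard consequence of the fact that a Banach-lattice quotient map is an \emph{interval-preserving} lattice homomorphism (equivalently, that $I$ being an ideal makes $q$ order-surjective onto order intervals); I would cite \cite[Proposition~III.11.5]{Schaefer1974} or the analogous statement in \cite{Meyer-Nieberg1991}, or give the two-line Riesz-decomposition argument: if $0\le\bar x\le q(w^+)$, lift $\bar x$ to some $x_0\in E$, then $0\le x_0^+\wedge w^+=:x$ has $q(x)=q(x_0^+)\wedge q(w^+)=\bar x^+\wedge q(w^+)=\bar x$. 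Everything else is bookkeeping with the relation $T_/q=qT$ and the continuity of $q$.
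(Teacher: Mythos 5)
Your argument is correct and follows essentially the same route as the paper: both proofs reduce $T_/$ applied to an order interval of $E/I$ to the image, under the quotient map, of $T$ applied to an order interval of $E$, and then invoke AM-compactness of $T$ together with continuity of the quotient map. The only cosmetic difference is the lifting step — you obtain the exact identity $q([0,w^+])=[0,q(w^+)]$ via the meet $x_0^+\wedge w^+$, whereas the paper derives the (equally sufficient) containment $T_/([0,X])\subseteq q\bigl(T([-x,x])\bigr)$ by applying the Riesz decomposition property to a representative $y\le x+z$ with $z\in I$.
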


\begin{proof}
Let $0 \le X \in E/I$. We have to show: the order interval $[0, X]$ in
$E/I$ is mapped by $T_/$ to a relatively compact subset of $E/I$.

Recall from \cite[Prop.~II.2.6]{Schaefer1974} that for $x, y\in E$ one has
\[ x{+}I \le y{+}I \quad \iff\quad \exists x_1, y_1 \in E :\, x-x_1, y-y_1\in
I,\,\, x_1\le y_1.
\]
In particular, there is $x\ge 0$ such that $X =
x{+}I$. Now, let $0 \le Y \le X$ be given. Then there are $y\in E$ and
$z\in I$ such that $y \le x + z$ and $Y = y{+}I$. Replacing $z$ by $\abs{z}$
we may suppose in addition that $z\ge 0$. 

By the decomposition property, $y^+ = a+ b$ for some elements
$0 \le a \le x$ and $0 \le b \le z$. Then $b \in I$ since $I$ is an
ideal and $z\in I$. As the canonical surjection is a lattice
homomorphism, $Y^+ = a{+}I$ and hence $T_/(Y^+) = Ta{+}I$. Similarly,
one can show that there is $c\in [0,x]$ with $Y^- = c{+}I$. It follows
that 
\[ T_/(Y) = T_/(Y^+) - T_/(Y^-) \in T([-x,x]) + I
\]
and hence $T_/([0, X]) \subseteq T([-x,x]) + I$. Since  $T$ is
AM-compact, the set $T([-x,x])$ is relatively compact in $E$, and hence
so is $T([-x,x]) + I$ in $E/I$. This implies that 
$T_/([0, X])$ is relatively compact, as desired.
\end{proof}

\subsection*{Examples of AM-Compact Operators}

In the remainder of this appendix we consider two results that help to
identify AM-compact operators. The first tells that every integral
operator is AM-compact. This is a well-known consequence
of abstract theory (see e.g.\ the discussion at the beginning of
\cite[Section~4]{GerlachConvPOS}, which is based on abstract results in
\cite[Corollary~3.7.3]{Meyer-Nieberg1991}
and \cite[Proposition~IV.9.8]{Schaefer1974}). 
Gerlach and Gl\"uck give an elementary proof involving measure-theoretic
(i.e., almost everywhere) arguments \cite[Proposition~A.1]{GerlachConvPOS}.
Our proof is a little less elementary, but replaces
the measure theory by functional analysis.

\begin{thm}\label{int.t.AM}
Let $\prX$ and $\prY$ be measure spaces and let
$1 \le p, q < \infty$. Let, furthermore, 
$k: X \times Y \to \R_+$ be a measurable mapping such that 
by
\[ Tf (x) := \int_\prY k(x, \cdot) f\qquad (x\in X)
\]
a bounded operator
\[ T: \Ell{p}(\prY) \to \Ell{q}(\prX)
\]
is (well) defined. Then $T$ is AM-compact.
\end{thm}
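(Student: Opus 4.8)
By Lemma \ref{AM.l.AM-char}, since $\Ell{p}(\prY)$ has (on every $\sigma$-finite piece) a quasi-interior point, it suffices to show that the image of a single order interval $[0,g]$ is relatively compact in $\Ell{q}(\prX)$ — equivalently, that $T$ restricted to the principal ideal $\Ell{p}(\prY)_g$, which is lattice isometric to a $\Ce(K)$, is a compact operator into $\Ell{q}(\prX)$. The plan is to fix $0 \le g \in \Ell{p}(\prY)$ and reduce to showing compactness of $T\colon \Ce(K) \to \Ell{q}(\prX)$, where $K$ is the Krein--Krein--Kakutani compactum associated with $g$.

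The core of the argument will be an approximation of the kernel by \emph{finite-rank} (or simple-function) kernels. First I would note that the measure $\nu_g := g\cdot\mu_{\prY}$ is a finite measure on $\prY$ (with $g^p$ integrable we may as well work on its support), and that $Tf$ depends only on $f|_{\{g>0\}}$; so after a change of density we reduce to the operator $\widetilde{T}\colon \Ell{\infty}(\prY,\nu_g) \to \Ell{q}(\prX)$ given by $\widetilde{T}h(x) = \int_\prY k(x,\cdot)\,g\,h\,\dd\mu_{\prY}$, whose kernel $\widetilde{k}(x,\cdot) := k(x,\cdot)g(\cdot)$ satisfies $\int_\prY \widetilde{k}(x,\cdot)\,\dd\mu_{\prY} = Tg(x) \in \Ell{q}(\prX)$. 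Now I would approximate $\widetilde{k}$ in an appropriate mixed norm by kernels of the form $\sum_{i} a_i(x)\car_{B_i}(y)$ with $a_i \in \Ell{q}(\prX)$ and $B_i \subseteq \prY$ measurable of finite $\nu_g$-measure, partitioning $\prY$; each such kernel induces a finite-rank operator $\Ell{\infty}(\prY,\nu_g) \to \Ell{q}(\prX)$, which is trivially compact. The approximation is the step that requires care: one wants $\bigl\| \widetilde{T} - \widetilde{T}_n \bigr\|_{\BL(\Ell{\infty};\Ell{q})} \to 0$, and since the operator norm from $\Ell{\infty}$ is controlled by $x \mapsto \int_\prY |\widetilde{k}(x,\cdot) - \widetilde{k}_n(x,\cdot)|\,\dd\mu_{\prY}$ measured in $\Ell{q}(\prX)$, it suffices to approximate $\widetilde{k}$ in the norm of $\Ell{q}\bigl(\prX; \Ell{1}(\prY,\nu_g)/g\bigr)$; this is a standard density statement for Bochner-type spaces once one observes $\widetilde{k}(\cdot)$ is (after the reduction) a Bochner-measurable $\Ell{1}$-valued function with integrable $\Ell{q}$-norm, a fact extracted from the mere well-definedness and boundedness of $T$ via Bukhvalov-type reasoning or directly from Fubini.

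The main obstacle will be the measure-theoretic bookkeeping needed to pass from ``$T$ is a well-defined bounded operator'' to ``$x \mapsto \widetilde{k}(x,\cdot)$ is a Bochner-measurable element of $\Ell{q}(\prX;\Ell{1})$'' without assuming $\sigma$-finiteness or separability up front — in particular handling the case where $\prX$ or $\prY$ is not $\sigma$-finite by first localizing to the $\sigma$-finite hull of the support of $g$ (on the $\prY$ side) and of $Tg$ (on the $\prX$ side), outside of which everything vanishes. Once that reduction is in place, the finite-rank approximation and the appeal to Lemma \ref{AM.l.AM-char} are routine. I would present the argument in the order: (1) reduce to a fixed $[0,g]$ via Lemma \ref{AM.l.AM-char}; (2) localize to $\sigma$-finite pieces and renormalize to put $g=\car$; (3) identify the kernel as a vector-valued $\Ell{q}$-function; (4) approximate by simple kernels and conclude compactness of the restriction, hence AM-compactness of $T$.
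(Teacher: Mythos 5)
Your strategy is correct and shares its skeleton with the paper's proof (fix $0\le u$, pass to the operator $h\mapsto T(uh)$ on $\Ell{\infty}$ of a finite measure space, and approximate by finite-rank kernel operators into $\Ell{q}(\prX)$), but the key approximation step is carried out by a genuinely different device. The paper first treats the case where both measures are finite and $T\car\in\Ell{\infty}(\prX)$: there $k\in\Ell{1}(\prX\times\prY)$, the density of $\Ell{1}(\prX)\tensor\Ell{1}(\prY)$ in $\Ell{1}(\prX\times\prY)$ gives compactness of $T:\Ell{\infty}(\prY)\to\Ell{1}(\prX)$, and this is upgraded to compactness into $\Ell{q}(\prX)$ by factoring through $\Ell{\infty}(\prX)$ and using that the $\Ell{1}$- and $\Ell{q}$-topologies agree on $\Ell{\infty}$-balls; the general case is then reached by truncating $g=T\car$ on the $\prX$ side via $\car_{\{1/n\le g\le n\}}T$ and letting $n\to\infty$ in the $\Ell{\infty}\to\Ell{q}$ operator norm. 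You instead approximate the kernel directly in the Bochner space $\Ell{q}\bigl(\prX;\Ell{1}(\prY)\bigr)$, which collapses the two-sided truncation and the $\Ell{1}\to\Ell{q}$ upgrade into a single density argument; the price is that the only nontrivial remaining step is exactly the one you defer, namely that $x\mapsto k(x,\cdot)u(\cdot)$ is Bochner measurable with values in $\Ell{1}(\prY)$ (reduce to countably generated sub-$\sigma$-algebras for essential separability, get weak measurability from Tonelli, and invoke Pettis), whereas the paper's route needs no vector-valued measure theory at all. Two small corrections: the measure $g\cdot\mu_{\prY}$ you introduce need not be finite for $g\in\Ell{p}$ with $p>1$; use $g^p\cdot\mu_{\prY}$ as the paper does (this is harmless, since all you really need is that $\norm{k(x,\cdot)g(\cdot)}_{\Ell{1}(\prY,\mu_{\prY})}=Tg(x)\in\Ell{q}(\prX)$). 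Also, Lemma \ref{AM.l.AM-char} is not strictly needed for the reduction, since AM-compactness is by definition a statement about each order interval $[0,u]$ separately, so no quasi-interior point of $\Ell{p}(\prY)$ has to be produced.
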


\begin{proof}
Let the measure spaces be $\prX = (X, \SigmaX, \muX)$ and $
\prY = (Y, \SigmaY,\muY)$.
Let $0 \le u \in \Ell{p}(\prY)$. We have to show that 
$T[0,u] \subseteq \Ell{q}(\prX)$ is relatively compact.

\smallskip
\noindent
Suppose first that both measures $\muX$ and $\muY$ are finite, 
$u = \car$ and $T\car \in \Ell{\infty}(\prX)$.
Then, in particular,  $k \in \Ell{1}(\prX \times \prY)$. Since
$\Ell{1}(\prX) \tensor \Ell{1}(\prY)$ is dense in 
$\Ell{1}(\prX\times \prY)$, $T: \Ell{\infty}(\prY) \to \Ell{1}(\prX)$
is compact. But $T$ factors through $\Ell{\infty}(\prX)$ (since
$T\car$ is bounded) and since on closed $\Ell{\infty}$-balls
the $\Ell{q}$- and the $\Ell{1}$-topology
coincide, $T: \Ell{\infty}(\prY) \to \Ell{q}(\prX)$ 
is compact. In particular, $T[0,\car]$ is relatively
compact in $\Ell{q}(\prX)$.

\smallskip
\noindent
In the general case, consider the 
operator
\[ S: \Ell{p}(Y,\SigmaY,u^p\muY) \to \Ell{q}(\prX),\qquad 
Sf := T(uf).
\]
Because of $[0, u] = u \cdot [0, \car]$ it suffices to show that 
$S[0,\car] \subseteq \Ell{q}(\prX)$ is relatively compact. Hence, one
may suppose without loss of generality  that $\muY$ is finite and 
$u = \car$. 

\smallskip\noindent
Under this assumption let
\[ g = T\car:= \Bigl(x \mapsto \int_Y k(x,y)\, \muY(\ud{y})\Bigr) \in \Ell{q}(\prX)
\]
and, for $n \in \N$, 
\[ g_n := \car_{\set{\frac{1}{n} \le g \le n}} g \in
\Ell{1}(\prX)\cap\Ell{\infty}(\prX).
\]
Define $T_n$ by 
\[ T_n f := \car_{\set{\frac{1}{n} \le g \le n}} Tf = 
\int_Y \car_{\set{\frac{1}{n} \le g \le n}}(x) \,  k(x,y)f(y)
\, \muY(\ud{y}). 
\]
Observe that  $T_n \car \in \Ell{\infty}(\prX)$ and it is supported
on a set of finite measure. So, by what we have shown first,
$T_n: \Ell{\infty}(\prY) \to \Ell{q}(\prX)$ is compact. 
Now, 
\[ \abs{(T- T_n)f} \le \Bigl( \car_{\set{g \le \frac{1}{n}}}  + 
\car_{\set{g \ge n}}\Bigr)\, g \,  \norm{f}_{\Ell{\infty}}
\]
and hence
\[ \norm{T-T_n}_{\Ell{q}\leftarrow\Ell{\infty}} \le 
\norm{\dots g}_{\Ell{q}}  \to 0
\qquad (n \to \infty).
\]
Consequently, also $T: \Ell{\infty}(\prY) \to \Ell{q}(\prX)$
is compact, and this concludes the proof.
\end{proof}

We turn to a second class of examples of AM-compact operators.

\begin{thm}\label{int.t.fact-Linfty}
Let $\prX$ and $\prY$ be measure spaces, let
$1 \le p, q < \infty$, and let
\[ T: \Ell{p}(\prY) \to \Ell{q}(\prX)
\]
be a positive operator with the following property:
There is a sequence $(A_n)_n$ in $\Sigma_\prX$ such that
$\bigcup_n A_n = X$ and such that 
\[ \car_{A_n} \cdot \ran(T) \subseteq \Ell{\infty}(A_n) \qquad
\text{for all $n\in \N$}.
\]
Then $T$ is AM-compact.
\end{thm}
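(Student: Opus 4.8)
The plan is to verify AM-compactness directly from the definition: fixing $0\le u\in\Ell{p}(\prY)$, one must show that $T[0,u]$ is relatively compact in $\Ell{q}(\prX)$. First I would replace $(A_n)_n$ by a pairwise disjoint sequence, still satisfying $\bigcup_n A_n=X$; this is harmless, since the hypothesis $\car_{A_n}\cdot\ran(T)\subseteq\Ell{\infty}(A_n)$ is inherited by measurable subsets. Identifying $\Ell{q}(\prX)$ with the $\ell^q$-direct sum of the $\Ell{q}(A_n)$, a norm-bounded subset is relatively compact if and only if each coordinate projection $g\mapsto\car_{A_n}g$ maps it to a relatively compact set and its tails are uniformly small. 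The uniform tail estimate is immediate from positivity: for $f\in[0,u]$ one has $0\le Tf\le Tu$, so $\norm{\car_{X\setminus(A_1\cup\dots\cup A_N)}Tf}_q\le\norm{\car_{X\setminus(A_1\cup\dots\cup A_N)}Tu}_q\to 0$ as $N\to\infty$ by dominated convergence (this is where $\bigcup_n A_n=X$ enters). Hence everything reduces to showing, for a single set $A:=A_n$, that $\car_A\cdot T[0,u]$ is relatively compact in $\Ell{q}(A)$.

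Write $S:=\car_A\circ T\colon\Ell{p}(\prY)\to\Ell{q}(A)$; this is positive and bounded, with $\ran(S)\subseteq\Ell{\infty}(A)$. I would next reduce to the case that $A$ has finite measure: since $Su\in\Ell{q}(A)$, the set $A_\delta:=\{\,Su\ge\delta\,\}$ has finite measure for each $\delta>0$, while $\car_{A\setminus A_\delta}\cdot S[0,u]\subseteq[0,\car_{A\setminus A_\delta}Su]$ has diameter $\le\norm{\car_{A\setminus A_\delta}Su}_q\to 0$ as $\delta\downarrow 0$ (dominated convergence); hence relative compactness of the sets $\car_{A_\delta}\cdot S[0,u]$ suffices. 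In the same vein, since $[0,u]\subseteq\Ell{p}(\{\,u>0\,\})$ and $Tu$ is carried by a $\sigma$-finite set, one may pass to $\sigma$-finite versions of both underlying measure spaces. Once $A$ has finite measure, $\Ell{\infty}(A)$ embeds continuously into $\Ell{q}(A)$, and the closed graph theorem shows that $S$ is bounded as an operator $\Ell{p}(\prY)\to\Ell{\infty}(A)$.

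The core step is then to show that $S$ is a kernel operator. I would invoke Bukhvalov's theorem: if $(f_n)_n$ is order bounded in $\Ell{p}(\prY)$ with $f_n\to 0$ a.e., then $f_n\to 0$ in $\Ell{p}(\prY)$ by dominated convergence, hence $Sf_n\to 0$ in $\Ell{\infty}(A)$, in particular a.e.; therefore $S$ is represented by a measurable kernel $k\colon A\times Y\to\R$, and positivity of $S$ forces $k\ge 0$ almost everywhere. Now Theorem~\ref{int.t.AM} applies directly to $k$ (with $1\le p,q<\infty$) and gives that $S=\car_A\circ T$ is AM-compact; in particular $\car_A\cdot T[0,u]=S[0,u]$ is relatively compact in $\Ell{q}(A)$. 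Combining this, over all $n$, with the uniform tail bound from the first step completes the proof.

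I expect the genuine obstacle to be this kernel-representation step together with the attendant measure-theoretic bookkeeping: justifying the reductions to $\sigma$-finite (and, if necessary, countably generated) measure spaces, and producing a \emph{jointly} measurable kernel out of Bukhvalov's theorem---equivalently, out of the identification $\BL\bigl(\Ell{p}(\prY),\Ell{1}(A)^{*}\bigr)=\bigl(\Ell{p}(\prY)\,\widehat{\otimes}_\pi\,\Ell{1}(A)\bigr)^{*}$ combined with the Radon--Nikod\'ym property of $\Ell{p'}(\prY)$ when $p>1$ (the case $p=1$ reducing cleanly to $\Ell{1}(A\times Y)^{*}=\Ell{\infty}(A\times Y)$). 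The remaining ingredients---disjointification, the $\ell^q$-sum compactness criterion, the uniform tail estimate, and the closed graph argument---are routine.
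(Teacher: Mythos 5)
Your argument is correct, but it takes a genuinely different route from the paper's. Your opening reductions (disjointification, the $\ell^q$-sum compactness criterion with the uniform tail bound $0\le Tf\le Tu$, truncation to $\{Su\ge\delta\}$, and the closed graph argument giving boundedness into $\Ell{\infty}(A)$) parallel the paper's own reduction, which instead uses the substitution $Sf:=T(uf)$ to normalize $u=\car$ and the projections $P_nf=\car_{A_n\cap\{T\car\ge 1/n\}}f$, with $P_nT\to T$ in $\BL(\Ell{\infty};\Ell{q})$, to reach the same normalized situation: finite measure spaces and $\ran(T)\subseteq\Ell{\infty}(\prX)$. The divergence is in the core step. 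You invoke Bukhvalov's criterion to represent the normalized operator by a jointly measurable kernel and then quote Theorem~\ref{int.t.AM}; this is precisely the reduction the authors mention in the remarks following Lemma~\ref{int.l.weak-strong} and deliberately avoid, because producing the kernel is the heaviest (and least elementary) part of the whole argument --- as you yourself flag. The paper instead proves Lemma~\ref{int.l.weak-strong} directly: restrict to $\Ell{r}(\prY)$ for some $r>1$ (harmless, since order intervals sit inside $\Ell{\infty}(\prY)\subseteq\Ell{r}(\prY)$ over a finite measure space), extract a weakly convergent subsequence by reflexivity, and observe that a weakly convergent \emph{sequence} in $\Ell{\infty}(\prX)\cong\Ce(K)$ is uniformly bounded and pointwise convergent, hence norm convergent in $\Ell{q}(\prX)$ over a finite measure space. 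Your route buys a conceptual explanation (these operators really are kernel operators, so Theorem~\ref{int.t.AM} subsumes them) at the price of the Radon--Nikod\'ym/tensor-product machinery behind Bukhvalov's theorem; the paper's route is self-contained and shorter, but yields only the compactness conclusion, not the kernel representation. One small point of care if you carry out your version: Bukhvalov's criterion is usually phrased with order-bounded sequences converging to $0$ \emph{in measure} rather than almost everywhere; the verification still goes through (pass to a.e.\ convergent subsequences and use dominated convergence to get $\Ell{p}$-convergence of the full sequence), but the statement should be quoted in that form.
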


\begin{proof}
By performing the same reduction as in the proof of Theorem \ref{int.t.AM}
above, we may suppose without loss of generality that $\prY$ is a
finite measure space. It remains to show that $T[0,\car]$ is
relatively $\Ell{q}$-compact.

Define the projection $P_n$ on $\Ell{q}(\prX)$ by 
\[ P_n f :=
\car_{A_n \cap \set{T\car \ge \frac{1}{n}}} f.
\]  
By hypothesis and the dominated convergence theorem,  
$P_nT \to T$ uniformly on order intervals of
$\Ell{p}(\prY)$, hence in particular with respect to the norm of $\BL(\Ell{\infty};
\Ell{q})$. Therefore, it suffices to consider the case that also
$\prX$ is a finite measure space, and $\ran(T) \subseteq
\Ell{\infty}(\prX)$.  By the subsequent Lemma \ref{int.l.weak-strong} we are done.
\end{proof}

\begin{lem}\label{int.l.weak-strong}
Let $\prX$ and $\prY$ be finite measure spaces, $1\le p, q < \infty$
and $T: \Ell{p}(\prY) \to \Ell{q}(X)$ a bounded operator
with $\ran(T) \subseteq \Ell{\infty}(\prX)$. Then for each 
$1 < r \le \infty$ with $r\ge p$ the operator 
\[ T\res{\Ell{r}} : \Ell{r}(\prY) \to \Ell{q}(\prX)
\]
is compact.
\end{lem}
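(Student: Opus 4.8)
The plan is to exploit a closed graph argument to upgrade the operator's range inclusion into a norm bound, and then invoke the classical compactness of the inclusion $\Ell{q}(\prX) \hookrightarrow \Ell{1}(\prX)$ together with a duality/reflexivity trick. First I would observe that $T$, viewed as a map $\Ell{r}(\prY) \to \Ell{\infty}(\prX)$, is closed: if $f_n \to f$ in $\Ell{r}$ and $Tf_n \to g$ in $\Ell{\infty}$, then $Tf_n \to Tf$ in $\Ell{q}$ (as $\Ell{r} \hookrightarrow \Ell{p}$ boundedly because $\prY$ is finite and $r \ge p$), and also $Tf_n \to g$ in $\Ell{q}$ since $\prX$ is finite and $\Ell{\infty}\hookrightarrow\Ell{q}$; hence $g = Tf$. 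By the closed graph theorem, $T\colon \Ell{r}(\prY) \to \Ell{\infty}(\prX)$ is bounded.

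Next I would treat the two ranges of $r$ separately. If $r < \infty$, then $\Ell{r}(\prY)$ is reflexive, so the closed unit ball $\Ball_{\Ell{r}}$ is weakly compact; since $T\colon \Ell{r}(\prY) \to \Ell{\infty}(\prX)$ is bounded, $T\Ball_{\Ell{r}}$ is a bounded subset of $\Ell{\infty}(\prX)$, i.e.\ it is order-bounded by some constant multiple of $\car$. The key point is then that a subset of $\Ell{q}(\prX)$ which is the image under a weak-to-weak continuous map of a weakly compact set, and which is in addition uniformly bounded in $\Ell{\infty}$ (hence uniformly integrable in $\Ell{q}$ since $\muX$ is finite), is relatively norm-compact in $\Ell{q}(\prX)$ — this is essentially the Dunford--Pettis-type observation that on an order interval $[-c\car, c\car]$ of $\Ell{\infty}(\prX)$ the weak $\Ell{q}$-topology and the norm $\Ell{q}$-topology interact via uniform integrability. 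More precisely, $T$ is weak-to-weak continuous $\Ell{r}(\prY) \to \Ell{q}(\prX)$, so $T\Ball_{\Ell{r}}$ is relatively weakly compact in $\Ell{q}(\prX)$, and being contained in the order interval $[-c\car, c\car]$ it is uniformly integrable, so by Vitali's theorem (or the fact that on such order intervals weak and norm convergence of sequences coincide for $1 \le q < \infty$) it is relatively norm-compact. For the remaining case $r = \infty$, I would simply note that $\Ell{\infty}(\prY) \hookrightarrow \Ell{s}(\prY)$ for every finite $s$ with $\prY$ finite, so the $r = \infty$ case reduces to, say, $r = \max\{p, 2\} < \infty$, already handled.

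I expect the main obstacle to be making precise the step ``order-bounded plus relatively weakly compact implies relatively norm-compact in $\Ell{q}$'': one should be careful that this uses the finiteness of $\muX$ (for uniform integrability to follow from the $\Ell{\infty}$-bound) and that $1 \le q < \infty$ (so that order intervals in $\Ell{q}(\prX)$ are weakly compact and the $\sigma(\Ell{q},\Ell{q'})$- and norm-topologies agree on them, a standard fact for Banach lattices with order-continuous norm, cf.\ \cite[Thm.~2.4.2]{Meyer-Nieberg1991}). An alternative, perhaps cleaner, route avoiding Vitali entirely: factor $T$ as $\Ell{r}(\prY) \xrightarrow{T} \Ell{\infty}(\prX) \xhookrightarrow{\iota} \Ell{q}(\prX)$; the first arrow maps the (weakly compact) unit ball of the reflexive space $\Ell{r}(\prY)$ into an order interval, and on that order interval the inclusion $\iota$ restricted there is continuous from the weak topology of $\Ell{\infty}$ (equivalently the weak topology inherited from $\Ell{q}$) to the norm topology of $\Ell{q}$, because order intervals in an order-continuous Banach lattice are norm-compact in their weak topology; composing, $T\Ball_{\Ell{r}}$ is norm-compact in $\Ell{q}(\prX)$. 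Either way the argument is short once the closed graph step is in place.
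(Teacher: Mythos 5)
Your closed graph step is fine and coincides with the paper's: both arguments first upgrade the range inclusion $\ran(T)\subseteq\Ell{\infty}(\prX)$ to boundedness of $T:\Ell{r}(\prY)\to\Ell{\infty}(\prX)$, and your reduction of the case $r=\infty$ to a finite reflexive exponent is harmless. The gap is in the compactness step. The principle you rely on --- ``a subset of $\Ell{q}(\prX)$ that is relatively weakly compact in $\Ell{q}$ and contained in an order interval $[-c\car,c\car]$ is relatively norm-compact'' --- is false. Take $\prX=(0,1)$ with Lebesgue measure, $q=2$ and the set $\{\sin(n\pi\,\cdot)\suchthat n\in\N\}\subseteq[-\car,\car]$: it is relatively weakly compact and uniformly integrable, its only weak cluster point is $0$, yet every element has $\Ell{2}$-norm $2^{-1/2}$, so no subsequence converges in norm. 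Vitali's theorem does not rescue this, because it requires convergence in measure, which weak convergence does not provide; and the ``standard fact'' you cite --- that the $\sigma(\Ell{q},\Ell{q'})$- and norm topologies agree on order intervals of a Banach lattice with order-continuous norm --- is likewise false (it would force every such lattice to be atomic; the correct statement is only that order intervals are weakly \emph{compact}). The same error occurs in your ``cleaner'' alternative route, where you assert that the weak topology of $\Ell{\infty}$ restricted to an order interval is ``equivalently the weak topology inherited from $\Ell{q}$'': the former is strictly finer, and conflating the two is precisely what breaks the argument.

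The missing ingredient, which is what the paper actually uses, is weak convergence in $\Ell{\infty}(\prX)$ with respect to the \emph{full} dual $(\Ell{\infty}(\prX))'$. Since $T:\Ell{r}(\prY)\to\Ell{\infty}(\prX)$ is bounded and $\Ell{r}(\prY)$ is reflexive for $1<r<\infty$, a bounded sequence $(f_n)_n$ in $\Ell{r}(\prY)$ has a subsequence with $(Tf_{n_k})_k$ weakly convergent \emph{in} $\Ell{\infty}(\prX)$. Representing $\Ell{\infty}(\prX)\cong\Ce(K)$, such a sequence is uniformly bounded and pointwise convergent on $K$, and then dominated convergence (with respect to the measure on $K$ induced by $\muX$) yields norm convergence in $\Ell{q}(\prX)$. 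Note that your counterexample sequence is \emph{not} weakly convergent in $\Ell{\infty}(0,1)$, which is exactly the extra information your argument discards. With this repair your proof becomes the paper's proof.
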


\begin{proof}
We may suppose $1 < p = r < \infty$. By the closed graph theorem, 
$T: \Ell{p}(\prY) \to \Ell{\infty}(\prX)$ is bounded. If $(f_n)_n$ is a bounded
sequence
in $\Ell{p}(\prY)$, it has a weakly convergent subsequence (by reflexivity).
Hence, $(Tf_n)_n$ has a weakly convergent subsequence in
$\Ell{\infty}(\prX)$.  But a weakly convergent sequence in
$\Ell{\infty}(\prX)$
converges strongly in $\Ell{q}(\prX)$ since $\prX$ is a finite measure
space.
(Represent $\Ell{\infty}(\prX) \cong \Ce(K)$ for some compact
Hausdorff space $K$, and observe that in $\Ce(K)$ a sequence(!) is
weakly
convergent if and only if it is uniformly bounded and pointwise convergent.)
\end{proof}

\begin{rems}
\begin{aufziii}
\item Actually, one can reduce Theorem \ref{int.t.fact-Linfty} to 
Theorem \ref{int.t.AM} by employing Bukhvalov's characterization of
integral operators \cite{Bukhvalov1978}, variants of which are for instance
discussed in \cite[Theorem~3.9]{Grobler1980/81}, \cite[Theorem~96.5]{Zaanen1983}, 
\cite[Theorem~1.5]{Arendt1994} 
and \cite[Theorem~4.2.12]{Gerlach2014a}

\item For $1\le p \le 2$, Lemma \ref{int.l.weak-strong} has a much
  more elementary proof. Indeed, by quite elementary arguments one can
  show that $T$ is a Hilbert--Schmidt operator when considered as an
  operator $\Ell{2}(\prY) \to \Ell{2}(\prX)$. 

\item In the proof of  Lemma \ref{int.l.weak-strong} we have
used that, for a finite measure space $\prX$, a sequence which 
converges weakly in $\Ell{\infty}(\prX)$ must converge strongly
on $\Ell{q}(\prX)$ for each $1\le q < \infty$. It would be nice to 
have a more elementary proof of this fact, avoiding the representation
$\Ell{\infty}(\prX)\cong \Ce(K)$.  

\end{aufziii}
\end{rems}

\section{Universal Nets}\label{app.uni}

In this appendix we treat, for the convenience of the reader, a
useful but maybe not so widely known concept from general topology.

\medskip
\noindent

Let $(I, \le)$ be any directed set. A subset $A$ of $I$ is called
a {\emdf tail} of $I$ if it is of the form
$A =\{ \alpha \in I \suchthat \alpha \ge \alpha_0\}$ for some
$\alpha_0 \in I$. And it is called {\emdf cofinal} if its complement
does not contain a tail. Equivalently, $A$ is cofinal if 
for each $\alpha_0\in I$ there is $\alpha \ge
\alpha_0$ such that $\alpha \in A$. Clearly, each tail is  cofinal.

A {\emdf tail} of a net $(x_\alpha)_{\alpha \in I}$ in a set $X$ is 
a subset of the form $\{x_\alpha \suchthat \alpha \in A\}$, where
$A\subseteq I$ is a tail of $I$. A net $(x_\alpha)_\alpha$
is called {\emdf
  universal} or an {\emdf ultranet}, if for each $Y\subseteq X$ either  the set
$Y$ or its complement contains a tail of the net. If $f: X_1 \to X_2$ is a
mapping
and $(x_\alpha)_{\alpha}$ is a universal net in $X_1$, then
$(f(x_\alpha))_{\alpha}$ is a universal net in $X_2$. Likewise, a
subnet of a universal net is universal.

\begin{lem}\label{uni.l.kelley}
Each net has a universal subnet.
\end{lem}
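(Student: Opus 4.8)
The statement to prove is the classical fact that every net has a universal subnet (Lemma~\ref{uni.l.kelley}). Let me sketch the standard approach.

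\textbf{The plan.} The plan is to use the ultrafilter characterization of universal nets. Given a net $(x_\alpha)_{\alpha \in I}$ in a set $X$, I would first form the filter base on $X$ generated by the tails of the net, i.e.\ the sets $B_{\alpha_0} = \{x_\alpha : \alpha \ge \alpha_0\}$ for $\alpha_0 \in I$. Since $I$ is directed, $B_\alpha \cap B_\beta \supseteq B_\gamma$ for any $\gamma \ge \alpha, \beta$, so these sets generate a proper filter $\mathcal{F}$ on $X$. By the ultrafilter lemma (a consequence of Zorn's lemma), $\mathcal{F}$ is contained in some ultrafilter $\mathcal{U}$ on $X$.

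\textbf{Constructing the subnet.} Next I would build the index set for the subnet as
\[
J := \{ (\alpha, U) \in I \times \mathcal{U} : x_\alpha \in U \},
\]
ordered by $(\alpha, U) \le (\beta, V)$ iff $\alpha \le \beta$ and $V \subseteq U$. One checks $J$ is directed: given $(\alpha, U)$ and $(\beta, V)$, pick $\gamma \ge \alpha, \beta$; since $U \cap V \cap B_\gamma \in \mathcal{U}$ (an intersection of three members of the ultrafilter) it is nonempty, so there is $\delta \ge \gamma$ with $x_\delta \in U \cap V$, and then $(\delta, U \cap V) \in J$ dominates both. Define the subnet by $y_{(\alpha, U)} := x_\alpha$; the projection $(\alpha, U) \mapsto \alpha$ is monotone and cofinal in $I$ (for each $\alpha_0$, the pair $(\alpha_0, B_{\alpha_0})$ lies in $J$), so $(y_{(\alpha,U)})$ is genuinely a subnet of $(x_\alpha)$.

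\textbf{Universality.} Finally I would verify universality: let $Y \subseteq X$. Since $\mathcal{U}$ is an ultrafilter, either $Y \in \mathcal{U}$ or $X \setminus Y \in \mathcal{U}$; say $Y \in \mathcal{U}$. I claim $Y$ contains a tail of the subnet. Indeed, by the directedness argument above there is $(\alpha_0, Y) \in J$, and for every $(\beta, V) \ge (\alpha_0, Y)$ we have $V \subseteq Y$ and $x_\beta \in V \subseteq Y$, so $y_{(\beta,V)} \in Y$; thus the tail above $(\alpha_0, Y)$ lies in $Y$. If instead $X \setminus Y \in \mathcal{U}$, the same reasoning gives a tail inside $X \setminus Y$. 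Hence the subnet is universal.

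\textbf{Main obstacle.} The only genuinely non-elementary ingredient is the existence of the dominating ultrafilter, which relies on Zorn's lemma (or equivalently the ultrafilter lemma); everything else is a routine but slightly fiddly verification that $J$ is directed and that the projection is cofinal. The place where care is needed is making sure the subnet definition is a bona fide subnet in the sense of a monotone cofinal map, rather than merely a net sharing tails.
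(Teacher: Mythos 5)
Your proof is correct and follows essentially the same route as the paper: extend the filter of tails to an ultrafilter $\mathcal{U}$ via Zorn's lemma and index the subnet by pairs from $I\times\mathcal{U}$ ordered by reverse inclusion in the second coordinate. The only cosmetic difference is that the paper keeps the full product $I\times\mathcal{U}$ and uses a choice function $\alpha(\beta,U)$, whereas you restrict to the subset of pairs $(\alpha,U)$ with $x_\alpha\in U$ and use the projection; both are standard, equivalent implementations.
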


\begin{proof}
Let $(x_\alpha)_{\alpha}$ be a net in a set $X$. Then the tails of the
net form a filter base, hence by Zorn's lemma there is an ultrafilter
 $\calU$ containing all the tails. For $\beta \in I$ and $U \in
 \calU$ there is $\alpha(\beta,U) \in I$ such that
$\alpha(\beta,U) \ge \beta$ and $x_{\alpha(\beta, U)} \in U$. The set
$I \times \calU$ is directed by 
\[ (\alpha, U)\le (\beta, V) \quad\defiff\quad \alpha \le
\beta\quad\wedge\quad
V \subseteq U.
\]
The net $(x_{\alpha(\beta,U)})_{\beta,U}$ is a universal subnet of $(x_\alpha)_\alpha$.
\end{proof}

Since a universal net in a
topological space $X$ converges
if and only if it has a convergent subnet,  $X$ is
compact if and only if each universal net in $X$ converges.

\begin{lem}\label{uni.l.cauchy}
Let $X$ be a  metric space and let $(x_\alpha)_{\alpha \in I}$ be a 
net in $X$. Consider the following three assertions:
\begin{aufzii}
\item For each $\veps > 0$ there is a compact set $K \subseteq X$ such
  that $\Ball[K,\veps]$ contains a tail of $(x_\alpha)_\alpha$.

\item For each $\veps > 0$ there is $z \in X$ such that
$\{ \alpha \suchthat x_\alpha \in \Ball[z,\veps]\}$ is cofinal.

\item The net $(x_\alpha)_{\alpha}$ is a Cauchy net.
 
\end{aufzii}
Then {\rm (iii)}$\dann${\rm (i)}$\dann${\rm (ii)}. And if
$(x_\alpha)_{\alpha\in I}$ is universal, also {\rm (ii)}$\dann${\rm
  (iii)}.  (Hence, in the latter case, all three assertions are
equivalent.)
\end{lem}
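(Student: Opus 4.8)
The plan is to establish the implications in the stated order: $\text{(iii)}\Rightarrow\text{(i)}\Rightarrow\text{(ii)}$ for arbitrary nets, and then $\text{(ii)}\Rightarrow\text{(iii)}$ under the additional assumption that $(x_\alpha)_\alpha$ is universal. None of these steps should present a serious obstacle; the only place where a little care is needed is the last implication, where universality must be exploited correctly.

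For $\text{(iii)}\Rightarrow\text{(i)}$: given $\veps>0$, use the Cauchy property to find $\alpha_0$ with $d(x_\alpha,x_\beta)<\veps$ for all $\alpha,\beta\ge\alpha_0$. Then the single-point set $K:=\{x_{\alpha_0}\}$ is compact and $\Ball[K,\veps]$ contains the tail $\{x_\alpha\suchthat\alpha\ge\alpha_0\}$. For $\text{(i)}\Rightarrow\text{(ii)}$: fix $\veps>0$ and apply (i) with $\veps/2$ to obtain a compact $K$ and a tail $\{x_\alpha\suchthat\alpha\ge\alpha_0\}\subseteq\Ball[K,\veps/2]$. Cover $K$ by finitely many balls $\Ball[z_1,\veps/2],\dots,\Ball[z_m,\veps/2]$; then $\Ball[K,\veps/2]\subseteq\bigcup_{j=1}^m\Ball[z_j,\veps]$. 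Hence for every $\beta\ge\alpha_0$ there is some $j$ with $x_\beta\in\Ball[z_j,\veps]$, so by a pigeonhole argument at least one index $j$ has the property that $\{\alpha\suchthat x_\alpha\in\Ball[z_j,\veps]\}$ meets every tail of $I$ contained in $\{\alpha\ge\alpha_0\}$, hence is cofinal; take $z:=z_j$.

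For the final implication $\text{(ii)}\Rightarrow\text{(iii)}$ under universality: let $\veps>0$ and apply (ii) with $\veps/2$ to get $z\in X$ such that $\{\alpha\suchthat x_\alpha\in\Ball[z,\veps/2]\}$ is cofinal. Since the net is universal, for the set $Y:=\Ball[z,\veps/2]$ either $Y$ or its complement contains a tail of the net; but the complement cannot contain a tail, because that would contradict cofinality of $\{\alpha\suchthat x_\alpha\in Y\}$. Hence $Y$ contains a tail, i.e.\ there is $\alpha_0$ with $x_\alpha\in\Ball[z,\veps/2]$ for all $\alpha\ge\alpha_0$, whence $d(x_\alpha,x_\beta)<\veps$ for all $\alpha,\beta\ge\alpha_0$. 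As $\veps>0$ was arbitrary, $(x_\alpha)_\alpha$ is Cauchy. The one subtlety to watch is the trichotomy hidden in universality (``either $Y$ or $X\setminus Y$ contains a tail of the net''): one must rule out the complement case using cofinality, rather than merely non-emptiness. Everything else is routine metric-space bookkeeping.
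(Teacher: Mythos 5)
Your proof is correct and follows essentially the same route as the paper's: a singleton $K$ for (iii)$\then$(i), a finite subcover of $K$ plus the fact that a finite union of sets can only be cofinal if one of them is for (i)$\then$(ii), and the tail-or-complement dichotomy of universality combined with cofinality for (ii)$\then$(iii). The only differences are cosmetic choices of radii ($\veps/2$ versus the paper's $\veps$ and $2\veps$).
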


\begin{proof}
(iii)$\dann$(i): Let $\veps>0$. Then there is $\alpha \in I$ such
that $d(x_\beta, x_\gamma)\le \veps$ for all $\beta, \gamma\ge
\alpha$. Hence $\Ball[\{x_\alpha\}, \veps]$ contains the tail
$\{ x_\beta \suchthat \beta \ge \alpha\}$.

\smallskip
\noindent
(i)$\dann$(ii):\ Let $\veps > \veps'> 0$. Then there is a finite subset
$F\subseteq K$ such that $K \subseteq \bigcup_{z\in F} \Ball(z,
\veps')$. Hence, by (ii), the set $\bigcup_{z\in F} \Ball[z, 2 \veps]$
contains a tail of $(x_\alpha)_{\alpha}$. Since $F$ is finite,
there is $z\in F$ such that $\{ \alpha \suchthat x_\alpha \in \Ball[z,
2\veps]\}$ is cofinal.

\smallskip
\noindent
(ii)$\dann$(iii):\  Suppose that $(x_\alpha)_\alpha$ is universal, 
let $\veps > 0$ and pick $z$ as in (ii).
By the universality of the net either the set
$A_\veps := \{ \alpha \suchthat x_\alpha \in \Ball[z, \veps]\}$ or its 
complement contains a tail of $I$.  But since $A_\veps$ is cofinal,
the second alternative is impossible. 
Hence, $d(x_\alpha, x_\beta) \le 2 \veps$ for all $\alpha,
\beta$ from a tail of $I$. 
\end{proof}

\begin{thm}\label{uni.t.cluster}
Let $(x_\alpha)_{\alpha\in I}$ be a net in a regular topological space
$X$ and let 
\[ C := \bigcap_{\beta \in I} \cls{\{ x_\alpha \suchthat \alpha \ge
  \beta\}}
\]
be its set of cluster points. Consider the following assertions.
\begin{aufzii}
\item Each subnet of $(x_\alpha)_{\alpha\in I}$ has a cluster point.
\item Each universal subnet of $(x_\alpha)_{\alpha\in I}$ converges.
\item For each cofinal subsequence $(\alpha_n)_{n \in \N}$ the
  sequence $(x_{\alpha_n})_{n \in \N}$ has a  cluster point.

\item The set $C$ is non-empty and compact. 
\end{aufzii}
Then {\rm (i)}$\gdw${\rm (ii)}$\dann${\rm (iii)} and {\rm (i)}$\dann${\rm (iv)}. 
If, in addition, $I$ admits cofinal subsequences and $X$ is
metrizable, then {\rm (iii)}$\dann${\rm (iv)} as well.
\end{thm}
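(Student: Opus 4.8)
The plan is to first establish the circle of equivalences among (i), (ii) and the conclusions (iii), (iv) for a general net in a regular space, and then add the hypotheses of metrizability and existence of cofinal subsequences to close the loop back from (iii) to (iv). For (i)$\Gdw$(ii): a subnet has a cluster point if and only if it has a convergent subnet (standard), so (ii) trivially implies (i) by passing to a universal subnet of any given subnet (Lemma \ref{uni.l.kelley}), which converges and hence the original subnet clusters there; conversely a universal net that clusters at a point converges to it, so (i)$\Then$(ii). For (i)$\Then$(iii), a cofinal subsequence $(x_{\alpha_n})_n$ is itself a subnet of the net (cofinality of the index sequence is exactly what makes it a subnet), so it has a cluster point by (i). For (i)$\Then$(iv): every element of $C$ is a cluster point of the net, so first I would show $C\neq\leer$ by taking any universal subnet (Lemma \ref{uni.l.kelley}), which converges by (ii) to some $x$, and checking $x\in C$; compactness of $C$ I would get by showing $C$ is closed (it is an intersection of closed sets) and that it is contained in a ``compact-like'' set — more precisely, I would show directly that every net in $C$ has a cluster point in $C$, using (i) applied to a suitable diagonal subnet built from the original net, or alternatively invoke that in a regular space $C=\bigcap_\beta\cls{\{x_\alpha:\alpha\ge\beta\}}$ is compact whenever every subnet clusters, which is a classical fact whose proof I would reproduce via universal nets.

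For the final implication (iii)$\Then$(iv) under the extra hypotheses (metrizable $X$, $I$ admitting cofinal subsequences), the strategy is: fix a cofinal sequence $(\beta_n)_n$ in $I$ (increasing, WLOG). For each $n$ pick $\alpha_n\ge\beta_n$; then $(\alpha_n)_n$ is cofinal in $I$, so by (iii) the sequence $(x_{\alpha_n})_n$ has a cluster point $x\in C$, giving $C\neq\leer$. For compactness of $C$: since $X$ is metrizable it suffices to show $C$ is sequentially compact. So let $(y_k)_k$ be a sequence in $C$; I want a subsequence converging to a point of $C$. Here I would use a diagonal argument: since each $y_k\in C$, for every $n$ there is $\alpha$ with $\alpha\ge\beta_n$ and $d(x_\alpha,y_k)<1/n$; choosing indices carefully I can build a cofinal subsequence $(x_{\alpha_m})_m$ that ``shadows'' $(y_k)_k$ in the sense that some subsequence of $(y_k)$ has the same limit behaviour as a subsequence of $(x_{\alpha_m})$. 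By (iii), $(x_{\alpha_m})_m$ has a cluster point $x\in C$; then the shadowing forces a subsequence of $(y_k)$ to cluster at $x$ too, and since $C$ is closed, $x\in C$ and we are done.

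The main obstacle I expect is the diagonal/shadowing step in (iii)$\Then$(iv): one must be careful that the constructed sequence of indices $(\alpha_m)$ is genuinely \emph{cofinal} in $I$ (not merely an infinite subset), because only cofinal subsequences are covered by hypothesis (iii); and one must arrange the construction so that a cluster point of $(x_{\alpha_m})$ really does control a cluster point of the given sequence $(y_k)$ in $C$. The cofinality is secured by interleaving the choice of $\alpha_m$ with the fixed cofinal sequence $(\beta_n)_n$ — always demanding $\alpha_m\ge\beta_m$ — which is exactly where the hypothesis ``$I$ admits a cofinal subsequence'' is used. Metrizability enters twice: to replace compactness by sequential compactness, and to make the $\veps$-approximation of points of $C$ by net values quantitative enough to run the diagonal argument. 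Regularity of $X$ is what guarantees throughout that the set of cluster points equals the displayed intersection and behaves well under these approximations.
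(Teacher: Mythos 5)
Your plan follows the paper's proof essentially step for step: (i)$\gdw$(ii) via universal subnets and the fact that a universal net with a cluster point converges, (i)$\dann$(iii) by observing that a cofinal subsequence is a subnet, (i)$\dann$(iv) by running a universal net in $C$ against a ``diagonal'' subnet of the original net, and (iii)$\dann$(iv) by approximating a given sequence in $C$ to within $1/n$ by net values indexed above a cofinal sequence and then using that $C$ is closed. The one piece you leave as a black box is precisely where the paper spends its effort in (i)$\dann$(iv): the explicit construction of the diagonal subnet (indexed by triples of an index, a tail of the net in $C$, and an open set) and the use of regularity to separate the resulting cluster point $y$ from an open set that would otherwise contain a tail of the universal net --- your closing remark attributes regularity to the identification of $C$ with the displayed intersection, which holds in any space, rather than to this separation step.
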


\begin{proof}
(i)$\gdw$(ii): This follows from Lemma \ref{uni.l.kelley} and the fact that a
universal net converges if and only if it has a cluster point.

\prfnoi
(i)$\dann$(iv): By hypothesis, the net $(x_\alpha)_{\alpha\in I}$
itself has a cluster point, so $C\neq \leer$.  Let $(y_j)_{j\in J}$ be
a universal net in $C$. It suffices to show that $(y_j)_j$ converges. Let
\[ M := \{ (\alpha, j, U) \suchthat \text{$\alpha\in I$,\, $U$ open in $X$ and $y_k \in U$
  for all $k \ge j$} \}
\]
be directed by 
\[  (\alpha, j_0, U) \le (\beta, j_1, V) \quad\defiff\quad \alpha \le
\beta\,\, \wedge \,\, j_0 \le j_1 \,\, \wedge\,\,
V \subseteq U.
\]
For each $(\alpha, j, U) \in M$ one has $y_j \in U$ and hence there is
\[ \alpha \le \vphi(\alpha, j, U) \in I \quad \text{with}\quad 
x_{\vphi(\alpha, j, U)} \in U.
\]
Then the mapping $\vphi: M \to I$ is cofinal, so $(x_{\vphi(\alpha, j,
  U)})_{(\alpha, j, U) \in M}$ is a subnet. By hypothesis (i), this
subnet has a cluster point $y\in X$, say. We prove that $y_j \to y$. 

To this end, let $V$ be any open neighborhood of $y$ in $X$. Since $X$
is regular, there is an open set $W$ such that $y\in W \subseteq
\cls{W} \subseteq V$. It suffices to show that $\cls{W}$ contains a
tail of $(y_j)_j$. Suppose that this is not the case. Then, since
that net is universal, the complement $U_0 := X \ohne \cls{W}$
contains tail of $(y_j)_j$. This means that there is $j_0\in J$ such
that  $y_j \in U_0$ for all $j \ge j_0$.  In particular,
$(\alpha, j_0, U_0) \in M$ for all $\alpha \in I$. 

Fix $\alpha_0 \in I$. Then, since $y$ is a cluster point of
$(x_{\vphi(m)})_{m \in M}$  and $W$ is an open neighborhood of $y$, 
there is $M \ni (\alpha, j, U) \ge
(\alpha_0, j_0, U_0)$ with $x_{\vphi(\alpha,j, U)} \in W$. But by the
construction of $\vphi$ we have also
$x_{\vphi(\alpha,j, U)} \in U \subseteq U_0$. Since $U_0 \cap W =
\leer$, this yields a contradiction and the proof is complete.

\prfnoi
(i)$\dann$(iii): This is trivial, as the sequence $(x_{\alpha_n})_n$
is a subnet of $(x_\alpha)_\alpha$ whenever 
$(\alpha_n)_n$ is a cofinal
sequence in $I$.

\prfnoi
(iii)$\dann$(iv): Suppose that $d$ is a metric inducing the
topology of $X$ and that $I$ admits cofinal sequences. It then
follows from (iii) that $C\neq \leer$. In order to see that $C$ is
compact, let $(y_n)_n$ be a sequence in $C$ and let 
$(\alpha_n)_n$ be a cofinal sequence in $I$. Recursively, one can find
a sequence $(x_{\beta_n})_n$ such that
$\beta_n \ge \alpha_n$ and $d(x_{\beta_n}, y_n) \le \frac{1}{n}$. 
By (iii), $(x_{\beta_n})_n$ has a cluster point. Since $X$ is metric,
this means that $(x_{\beta_n})_n$ has a convergent subsequence. 
But then $(y_n)_n$ also has convergent subsequence, and as $C$ is
closed, the limit of this subsequence lies in $C$.  
\end{proof}

\section{Some Notions from Semigroup Theory}\label{app.sgp}

A {\emdf semigroup} is a  non-empty set $S$ together with an
associative operation $S\times S \to S$, generically called
``multiplication''. A subset $M$  of a semigroup $S$ is {\emdf
  multiplicative}, if $M \cdot M \subseteq M$. A non-empty
multiplicative subset is a  {\emdf subsemigroup}.
A non-empty subset $J$ of a semigroup $S$ is a (two-sided) {\emdf ideal} of $S$ 
if $SJ \cup J S \subseteq J$. Each ideal is a subsemigroup.

A {\emdf neutral element} in a semigroup is any element $e\in S$ such
that $e s = se = s$ for all $s\in S$. There is at most
one neutral element; if there is none, one can adjoin one
in a standard way.   

A semigroup $S$ is called {\emdf Abelian} or {\emdf commutative} if 
$st =ts$ for all $s, t\in S$. It is common to write 
Abelian semigroups additively, and denote their neutral elements
by $0$.

\medskip

An Abelian semigroup $S$ is called {\emdf cancellative} if
it satisfies the implication
\[  s + t = s + t'\quad \dann\quad t = t'
\]
for all $s,t, t'\in S$. By a standard result from semigroup theory,
$S$ is faithfully embeddable into a group
if and only if $S$ is cancellative.  In this case, one usually
considers $S$ as a {\em subset} of some group $G$ such that $G =
S{-}S$. The (necessarily Abelian) group $G$ 
is then unique up to canonical isomorphism
and is called the group {\emdf generated} by $S$. In each case, when
we speak of a group generated by an Abelian semigroup $S$, 
we have this meaning in mind, and in particular suppose tacitly that 
$S$ is cancellative.

\medskip

An Abelian semigroup $S$ with neutral element $0$ 
is called {\emdf divisible}, if for each $s\in S$ and $n \in
\N$ there is $t\in S$ such that $nt = s$.  (This definition
extends the common notion of divisibility from groups to semigroups.)
And $S$  is called {\emdf essentially divisible} if for each $s\in S$ and
$n \in \N$ there are $t_1, t_2\in S$ such that $nt_1 = s + nt_2$.

Each divisible semigroup and each semigroup that generates a 
divisible group is essentially divisible.

\end{document}